\newcommand{\hyp}[1]{\hyperref[#1]{\ref*{#1}}} % reference
\newcommand{\bi}[1]{\textbf{\textit{#1}}} % bold italics
\newcommand{\ite}{\begin{itemize}}
\newcommand{\eti}{\end{itemize}}
\newcommand{\C}{\mathbb{C}} % complex numbers
\newcommand{\Q}{\mathbb{Q}} % rational numbers
\newcommand{\R}{\mathbb{R}} % real numbers
\newcommand{\Z}{\mathbb{Z}} % integers
\newcommand{\calB}{\mathcal{B}}
\newcommand{\calC}{\mathcal{C}}
\newcommand{\calI}{\mathcal{I}}
\newcommand{\calL}{\mathcal{L}}
\newcommand{\calM}{\mathcal{M}}
\newcommand{\calS}{\mathcal{S}}
\newcommand{\calX}{\mathcal{X}}
\newtheorem{thm}{Theorem}[section]
\newtheorem{cor}[thm]{Corollary}
\newtheorem{lem}[thm]{Lemma}
\newtheorem{prop}[thm]{Proposition}
\theoremstyle{definition}
\newtheorem{ex}[thm]{Example}
\newtheorem{clm}{Claim}
\begin{document}

\title{Algebraic Intersection Spaces}
\date{\today}
\author{Christian Geske}
\address{Department of Mathematics,
          University of Wisconsin-Madison,
          480 Lincoln Drive, Madison WI 53706-1388, USA.}
\email{cgeske@wisc.edu}
\keywords{intersection spaces, complex varieties, intersection homology, duality, signature}
\subjclass[2010]{55N33, 57P10, 32S99.}
\thanks{I thank my advisor Larentiu Maxim for helpful discussion and guidance and J\"{o}rg Sch\"{u}rmann for helpful suggestions regarding selection of tubular neighborhoods.
I would also like to thank the anonymous referee for his insight and constructive suggestions.
C. Geske gratefully acknowledges the support provided by the NSF-RTG grant \#1502553 at the University of Wisconsin-Madison.}
\begin{abstract}
We define a variant of intersection space theory that applies to many compact complex and real analytic spaces $X$, including all complex projective varieties; this is a significant extension to a theory which has so far only been shown to apply to a particular subclass of spaces with smooth singular sets.
We verify existence of these so-called \textit{algebraic} intersection spaces and show that they are the (reduced) chain complexes of known topological intersection spaces in the case that both exist.   
We next analyze ``local duality obstructions'', which we can choose to vanish, and verify that algebraic intersection spaces satisfy duality in the absence of these obstructions.
We conclude by defining an untwisted algebraic intersection space pairing, whose signature is equal to the Novikov signature of the complement in $X$ of a tubular neighborhood of the singular set.       
\end{abstract}
\maketitle

\thispagestyle{fancy}
\renewcommand{\headrulewidth}{0pt}
\setlength{\footskip}{20pt}
\rhead{}
\cfoot{}

\section{Introduction} 

Singular complex varieties typically lack the Poincar\'{e} duality enjoyed by their nonsingular counterparts.  
Approaches to rectifying this disparity can be found in Cheeger's $L^2$-cohomology and Goresky-MacPherson's middle-perversity intersection homology \cite{GM1}, \cite{GM2}.
These theories endow both singular and nonsingular spaces with an intrinsic duality, and moreover exhibit stability under small resolutions, but suffer from instability under smooth deformations.
A natural question is: does a duality-satisfying homology-type theory exist that behaves well under smooth deformations?
This question has been broached and, for hypersurfaces with isolated singularities, answered partially in the affirmative in \cite{BaMa1} and \cite{BaMa2}, in which an alternate theory is utilized: the intersection space homology theory introduced in \cite{Ba1}.
At this point, a universal generalization has not been discovered, because intersection space theory has not been defined for the vast majority of singular spaces.   
This paper devises a variant/extension of intersection space theory applicable to all complex projective varieties, and so enables future endeavors in this and other directions.  

Given a perversity $\bar{p}$, in the sense of intersection homology theory, Banagl in \cite{Ba1} associated to certain real $n$-dimensional stratified pseudomanifolds $X$ CW-complexes $I^{\bar{p}}X$, the \bi{perversity $\bar{p}$ intersection spaces}.
When $X$ is compact and oriented, there exist duality isomorphisms:
\begin{align*}
\tilde{H}^*(I^{\bar{p}}X;\Q) \cong \tilde{H}_{n-*}(I^{\bar{q}}X;\Q)
\end{align*}      
where $\bar{q}$ is the complementary perversity to $\bar{p}$.
The intersection space construction has been shown to apply when $X$ has isolated singularities, under certain conditions when $X$ has depth one (see \cite{BaC} for the latter), and for arbitrary depth spaces under certain restrictive conditions on link bundles of strata (see \cite{AF}).
The resulting homology and cohomology vector spaces depend on the stratification of $X$, and in the non-isolated setting can depend on a ``local'' choice made in the intersection space construction.   
Special cases where the stratification of $X$ is more elaborate have been studied, for example in \cite{Ba3}, but no all-encompassing picture has been painted.
Despite the limited collection of spaces for which it is defined, intersection space theory has had applications in multiple fields: fiber bundle theory \cite{Ba2}, algebraic geometry and smooth deformations \cite{BaMa1} and \cite{BaMa2}, perverse sheaves \cite{BaMa3}, and theoretical physics \cite[Chapter 3]{Ba1}.
    
This paper selects as its target the collection of compact orientable Whitney stratified pseudomanifolds $X$ which are subvarieties of a real/complex analytic manifold, and from them systematically extracts chain complex alternatives to topological intersection spaces that we equip with the moniker perversity $\bar{p}$ \bi{algebraic intersection spaces}.   
Though deprived of a topology, these algebraic intersection spaces carry homology, which we require to be an extension to, not replacement of, the already existing intersection space homology.

The introduction is followed in Section \hyp{Motivation} by a discussion of the motivation for the algebraic intersection space construction, replete with technical remarks on the features of the present theory that make it difficult to generalize. 
This section is most suited to a reader familiar with the current theory and is not necessary to understand the rest of the paper.

Section \hyp{Preliminary} collects general theorems and lemmas that will enable, or in some cases merely streamline, the arguments to be made toward the main results of the paper. Except perhaps for Subsection \hyp{Tubular Neighborhoods} on tubular neighborhoods of singular sets, detailed reading of this section should be left until the corresponding theorem or lemma is referred to in a proof from the final three sections, which form the core of the paper.    

Just as with intersection spaces, algebraic intersection spaces are built in parts: first locally, then globally. 
Section \hyp{LocalConstruction} describes the local construction, which rests on a \bi{local intersection approximation}.
Its concluding subsection \hyp{LocalDualityIsomorphism} describes \bi{local duality obstructions}, an unfortunate feature of certain local intersection approximations that precludes a global duality.
On the bright side, there always \textit{exist} local intersection approximations for which there are no local duality obstructions, a statement proven in the very same subsection. 

Section \hyp{GlobalConstruction} takes the local construction and converts it into a global algebraic intersection space associated to $X$.
In the case that the local duality obstructions vanish, duality isomorphisms are constructed between the homology and cohomology of complementary perversity algebraic intersection spaces.

Section \hyp{Example} explicitly constructs a topological intersection space for a depth two pseudomanifold.
It is compared against the \cite{AF} construction, from which it is shown to differ on the level of homology.

When $X$ is a Witt space, Section \hyp{IntersectionSpacePairing} extracts the signature of an \bi{intersection space pairing} on the middle dimensional homology of the algebraic intersection space (again in the case that the local duality obstructions vanish), which turns out to be equal to the Novikov signature of $X$ minus an open tubular neighborhood of the singular set. 

\section{Motivation}\label{Motivation}
For this discussion, suppose $X$ is a compact real pseudomanifold with singular set $\Sigma$.
Moreover, assume that $\Sigma$ has a nice closed ``tubular neighborhood'' $T$ in $X$ (to be defined later). 
Let $\bar{p}$ denote a perversity function.

In the case that $\Sigma$ is a discrete set of points, the boundary $\partial T$ is the union of links of singular points, and Banagl in \cite{Ba1} introduced the notion of a topological intersection space $I^{\bar{p}}X$ associated to $X$. 
The key component to the construction was a local CW complex \textit{Moore approximation} to $\partial T$, which in effect provided a topological ``splitting'' of the natural chain map $C_\bullet(\partial T) \rightarrow IC^{\bar{p}}_\bullet(T)$: the Moore approximation is a map $(\partial T)_{<m} \rightarrow \partial T$ for which the composition $H_*(\partial T)_{<m} \rightarrow H_*(\partial T) \rightarrow IH^{\bar{p}}_*(T)$ is an isomorphism, where here $m$ depends on $\bar{p}$ and the dimension of $X$. 
Banagl developed a ``truncation tool'' which took a simply-connected link and detailed how to obtain a Moore approximation.
Banagl also showed how to construct intersection spaces for depth one spaces whose link bundles are trivial \cite[Section 2.9]{Ba1}.

In \cite{BaC} the theory was extended to a larger class of $X$, allowing the singular set to have arbitrary dimension, but requiring it be smooth and for $\partial T$ to have the structure of a fiber bundle over $\Sigma$.
The local topological tool developed was \textit{equivariant Moore approximation}, a map $ft_{<m}\partial T \rightarrow \partial T$ giving an isomorphism $H_*(ft_{<m}\partial T) \rightarrow IH_*(T)$, where here $m$ depends on $\bar{p}$ and the codimension of $\Sigma$.
While of interest in its own right, the tool need not apply to every space from the class just described. 
With so many roadblocks and restrictions popping up even for this relatively pleasant class of spaces, it seemed unlikely that a full generalization was near at hand.  

Our eyes were set on an assumption that appears in the generalization described in the previous paragraph: that the map $H_*(\partial T) \rightarrow IH^{\bar{p}}_*(T)$ be \textit{surjective}.
This certainly fails to hold in general, and fails more often the more complex the stratification becomes (in the discrete case this \textit{never} fails). 
We circumvent it by working with the image of $H_*(\partial T) \rightarrow IH_*^{\bar{p}}(T)$, rather than $IH_*^{\bar{p}}(T)$ itself.

\section{Preliminary Results}\label{Preliminary}

\subsection{Triangulated Categories}

The set of tools consisting of the language and results of triangulated categories will streamline a number of the arguments made throughout this paper.
We draw from \cite{W} throughout this subsection.

\begin{lem}\label{indexacttri}
Suppose $R$ is a commutative, unital ring and:
\begin{align*}A_\bullet \xrightarrow{f_\bullet} B_\bullet \xrightarrow{g_\bullet} C_\bullet \xrightarrow{-1}\end{align*}
is an exact triangle of chain complexes of $R$-modules.
For $i \in \Z$ set:
\begin{align*}
Z_i &= im \left[H_i(A_\bullet) \xrightarrow{f_*} H_i(B_\bullet)\right]\\
Y_i &= coker \left[H_i(B_\bullet) \xrightarrow{g_*} H_i(C_\bullet)\right].
\end{align*}
Interpret $H_\bullet(A_\bullet)$, $Z_\bullet$, and $Y_\bullet$ as chain complexes with zero differential.
Then there is an exact triangle:
\begin{align*}H_\bullet(A_\bullet) \rightarrow Z_\bullet \rightarrow Y_\bullet \xrightarrow{-1}\end{align*}
whose maps are those induced by the maps of the long exact sequence in homology associated to the given exact triangle.  
\end{lem}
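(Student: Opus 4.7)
The plan is to realize the desired triangle as the rotation of a distinguished triangle coming from an elementary short exact sequence of chain complexes with zero differentials; all of the relevant content already lives in the long exact sequence in homology attached to $A_\bullet \to B_\bullet \to C_\bullet \xrightarrow{-1}$.

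From the long exact sequence, the connecting morphism $\partial : H_{i+1}(C_\bullet) \to H_i(A_\bullet)$ vanishes on $\im g_*$ and has image $\ker f_*$, so it factors canonically as a surjection onto $Y_{i+1} = \coker g_*$ followed by an injection $Y_{i+1} \hookrightarrow H_i(A_\bullet)$ with image $\ker f_*$. Assembling over all $i$ produces a short exact sequence of chain complexes with zero differential
\[0 \to Y_{\bullet+1} \to H_\bullet(A_\bullet) \to Z_\bullet \to 0,\]
where $Y_{\bullet+1}$ denotes the complex placing $Y_{i+1}$ in degree $i$, the left map is the injection just described, and the right map is $f_*$ viewed as the surjection onto its image.

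Next I would invoke the standard mechanism (see \cite{W}) associating a distinguished triangle to a short exact sequence of complexes, producing
\[Y_{\bullet+1} \to H_\bullet(A_\bullet) \to Z_\bullet \xrightarrow{-1},\]
and rotate it to obtain $H_\bullet(A_\bullet) \to Z_\bullet \to Y_\bullet \xrightarrow{-1}$, using that the appropriate shift of $Y_{\bullet+1}$ equals $Y_\bullet$. It remains to identify each of the three maps with what is induced from the LES: the first is by construction $f_*$ followed by projection onto the image; the second is the connecting morphism of a short exact sequence of zero-differential complexes, hence zero in every degree, agreeing with the LES-induced composition $Z_i \hookrightarrow H_i(B_\bullet) \xrightarrow{g_*} H_i(C_\bullet) \twoheadrightarrow Y_i$, which vanishes since $Z_i \subseteq \ker g_*$; and the third is the reindexed original inclusion $Y_{\bullet+1} \hookrightarrow H_\bullet(A_\bullet)$, identifying $Y_i$ with $\im \partial \subseteq H_{i-1}(A_\bullet)$, which is exactly the LES-induced map.

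The only mildly delicate point is tracking the shift conventions under the rotation; beyond this the argument is purely formal bookkeeping on the LES together with the standard correspondence between short exact sequences of complexes and distinguished triangles.
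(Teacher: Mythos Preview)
Your proof is correct and follows essentially the same route as the paper's: construct the short exact sequence $0 \to Y_{\bullet+1} \to H_\bullet(A_\bullet) \to Z_\bullet \to 0$ of zero-differential complexes from the long exact sequence, invoke the standard correspondence (Weibel, Example 10.4.9) to get a distinguished triangle, and rotate. Your additional paragraph explicitly identifying the three maps with those induced by the LES is a welcome elaboration that the paper leaves implicit.
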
 
\begin{proof}
The given exact triangle induces a long exact sequence in homology, which provides us the natural identification:
\begin{align*}Y_i = ker \left[H_{i-1}(A_\bullet) \xrightarrow{f_*} H_{i-1}(B_\bullet)\right].\end{align*}
So there is a short exact sequence of chain complexes:
\begin{align*}0 \rightarrow Y_{\bullet+1} \rightarrow H_\bullet(A_\bullet) \rightarrow Z_\bullet \rightarrow 0.\end{align*} 
By \cite{W} Example 10.4.9, this induces an exact triangle:
\begin{align*}Y_{\bullet+1} \rightarrow H_\bullet(A_\bullet) \rightarrow Z_\bullet \xrightarrow{-1}.\end{align*}
By the second axiom of triangulated categories, stated in \cite{W}, this new triangle induces by translation another exact triangle:
\begin{align*}H_\bullet(A_\bullet) \rightarrow Z_\bullet \rightarrow Y_\bullet \xrightarrow{-1}.\end{align*}
\end{proof}

\noindent A similar statement for cochain complexes is given below.

\begin{lem}\label{coindexacttri}
Suppose $R$ is a commutative, unital ring and:
\begin{align*}C^\bullet \xrightarrow{f_\bullet} B^\bullet \xrightarrow{g_\bullet} A^\bullet \xrightarrow{+1}\end{align*}
is an exact triangle of cochain complexes of $R$-modules.
For $i \in \Z$ set:
\begin{align*}
Z^i &= coim \left[H^i(B^\bullet) \xrightarrow{g_*} H^i(A^\bullet)\right]\\
Y^i &= ker \left[H^i(C^\bullet) \xrightarrow{f_*} H^i(B^\bullet)\right].
\end{align*}
Interpret $H^\bullet(A^\bullet)$, $Z^\bullet$, and $Y^\bullet$ as cochain complexes with zero differential.
Then there is an exact triangle:
\begin{align*}Y^\bullet \rightarrow Z^\bullet \rightarrow H^\bullet(A^\bullet) \xrightarrow{+1}\end{align*}
whose maps are those induced by the maps of the long exact sequence in cohomology associated to the given exact triangle.   
\end{lem}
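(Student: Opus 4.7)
The plan is to mirror the proof of Lemma~\ref{indexacttri}, making the appropriate bookkeeping adjustments for cohomological grading. First I would pass to the long exact sequence in cohomology induced by the given exact triangle, with connecting homomorphism $\delta \colon H^{i-1}(A^\bullet) \to H^i(C^\bullet)$. Exactness at $H^i(C^\bullet)$ gives the identification
\begin{align*}
Y^i = \ker\left[H^i(C^\bullet) \xrightarrow{f_*} H^i(B^\bullet)\right] = \im \delta,
\end{align*}
and since $\ker \delta = \im\!\left[g_* \colon H^{i-1}(B^\bullet) \to H^{i-1}(A^\bullet)\right] = Z^{i-1}$, the first isomorphism theorem yields $Y^i \cong H^{i-1}(A^\bullet)/Z^{i-1}$. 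Re-indexing produces a natural short exact sequence of cochain complexes (all with zero differential):
\begin{align*}
0 \to Z^\bullet \to H^\bullet(A^\bullet) \to Y^{\bullet+1} \to 0.
\end{align*}

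Next, applying the cochain analog of \cite{W} Example 10.4.9, this short exact sequence induces an exact triangle
\begin{align*}
Z^\bullet \to H^\bullet(A^\bullet) \to Y^{\bullet+1} \xrightarrow{+1}.
\end{align*}
Rotating backward via the second (translation) axiom of triangulated categories produces the desired exact triangle
\begin{align*}
Y^\bullet \to Z^\bullet \to H^\bullet(A^\bullet) \xrightarrow{+1},
\end{align*}
whose arrows are, by construction, induced by the corresponding maps in the long exact sequence in cohomology.

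The only real obstacle is essentially notational: one must keep the cohomological shift conventions consistent with the placement of the $+1$ arrow, ensuring that the shifted complex $Y^{\bullet+1}$ (i.e.\ the one-fold translate $Y[1]$) is inserted on the correct side of the short exact sequence before rotation, so that rotating yields $Y^\bullet \to Z^\bullet \to H^\bullet(A^\bullet) \xrightarrow{+1}$ rather than the reversed variant. Once that is pinned down, the argument runs in lockstep with that of Lemma~\ref{indexacttri}.
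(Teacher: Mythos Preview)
Your proposal is correct and follows essentially the same route as the paper's proof: both identify $Z^i$ with $\im g_*$ (the paper does this explicitly via the first isomorphism theorem, you implicitly), use the long exact sequence to recognize $Z^i=\ker\delta$ and $Y^i=\im\delta$, assemble the short exact sequence $0\to Z^\bullet\to H^\bullet(A^\bullet)\to Y^{\bullet+1}\to 0$, invoke \cite{W} Example 10.4.9, and rotate. There is no substantive difference.
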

\begin{proof}
The first isomorphism theorem gives a natural identification:
\begin{align*}Z^i = im\,\left[H^i(B^\bullet) \xrightarrow{g_*} H^i(A^\bullet)\right].\end{align*}
The long exact sequence in cohomology of the given exact triangle provides further identifications:
\begin{align*}
Z^i &= ker \left[H^i(A^\bullet) \rightarrow H^{i+1}(C^\bullet)\right]\\
Y^i &= im \left[H^{i-1}(A^\bullet) \rightarrow H^i(C^\bullet)\right].
\end{align*}
So there is a short exact sequence of cochain complexes:
\begin{align*}0 \rightarrow Z^\bullet \rightarrow H^\bullet(A^\bullet) \rightarrow Y^{\bullet+1} \rightarrow 0.\end{align*}
Proceed with the same argument given in the proof of Lemma \hyp{indexacttri} to obtain the desired exact triangle.
\end{proof}

\subsection{Tubular Neighborhoods of Singular Sets}\label{Tubular Neighborhoods}
Throughout this subsection, let $\calC$ denote the category of real subanalytic sets (we refer to \cite{VM} for a description of this category and its properties).
Let $X$ denote a compact subvariety of a real analytic manifold and let $\Sigma \subset X$ denote its singular set.

\begin{lem}\label{zeroset}
There exists a $\calC$-map $f: X \rightarrow \R_{\geq 0}$ such that $f^{-1}0 = \Sigma$.
\end{lem}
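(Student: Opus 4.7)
The plan is to take $f$ to be the Euclidean distance function to $\Sigma$ after a suitable ambient embedding. I would first invoke Grauert's embedding theorem to realize the ambient real analytic manifold as a closed analytic submanifold of some $\R^N$; this allows one to regard $X$ as a compact subanalytic subset of $\R^N$, and $\Sigma$ as a closed subanalytic subset of $X$ (that the singular locus of a subanalytic set is itself subanalytic is a standard fact in the subanalytic category, cf.\ \cite{VM}).

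With this setup in place, I would define $f : X \to \R_{\geq 0}$ by $f(x) := d(x, \Sigma)$, where $d$ denotes the Euclidean distance inherited from $\R^N$. Continuity of $f$ is immediate, and closedness of $\Sigma$ together with non-negativity of $d$ yields $f^{-1}(0) = \Sigma$ at once.

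What remains, and what I view as the main technical step, is verifying that $f$ is subanalytic, and hence a $\calC$-map. This will follow from the classical result that the distance function $d(\,\cdot\,, A) : \R^N \to \R_{\geq 0}$ to a bounded subanalytic set $A \subset \R^N$ is itself globally subanalytic; restriction of a subanalytic map to a subanalytic subset preserves subanalyticity, so the desired conclusion follows. The principal obstacle is thus the invocation of this closure property of the subanalytic category, which is nontrivial but standard and available in the references (see \cite{VM}).
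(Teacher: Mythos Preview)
Your argument is correct. The distance-function construction via a Grauert embedding is a perfectly valid way to produce the required $\calC$-map: compactness of $X$ makes $\Sigma$ bounded in $\R^N$, the distance function to a bounded subanalytic set is globally subanalytic (hence a $\calC$-map), and closedness of $\Sigma$ gives $f^{-1}(0)=\Sigma$.

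The paper proceeds a bit differently and more directly: it invokes \cite{VM}~D.19, which already furnishes a $\calC$-map $M\to\R$ on the ambient analytic manifold with zero set exactly $\Sigma$, and then simply squares and restricts to $X$. This avoids the detour through Grauert's embedding theorem and the separate verification that the Euclidean distance function is subanalytic; it also happens not to use compactness of $X$, as the paper remarks. Your route is more explicit and self-contained, essentially unpacking a construction behind results like \cite{VM}~D.19, at the cost of importing Grauert's theorem and the boundedness hypothesis on $\Sigma$. Either approach is adequate for the purposes of the paper.
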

\begin{proof}
Let $M$ be the real analytic manifold containing $X$.
\cite{VM} D.19 provides a $\calC$-map $M \rightarrow \R$ with zero set $\Sigma$.
Square this map and restrict it to $X$ to obtain the desired $\calC$-map. This argument does not require that $X$ be compact.	
\end{proof}

Suppose $f$ is a map satisfying the conditions of Lemma \hyp{zeroset}. 
Because $X$ is compact, $f$ is proper and therefore can be smoothly Whitney stratified into subanalytic sets (see \cite{VM} 1.19 and the following remark).
Let $\calS$ denote such a stratification of $X$ and $\calS'$ the stratification of $\R_{\geq 0}$.
Because $0$-dimensional subanalytic sets are discrete (see \cite{VM} 1.15)
there exists a minimal $\epsilon_0 > 0$ for which $\{\epsilon_0\} \in \calS'$.

A triple $\xi = (f, \calS, \epsilon_0)$ as in the previous paragraph is called \bi{global tubular data} for the singular set of $X$.
Given such data and $0 < \epsilon < \epsilon_0$ we let $T = T(\epsilon)$ denote $f^{-1}[0,\epsilon]$ and call it a \bi{(closed) tubular neighborhood} of $\Sigma$ in $X$ associated to $\xi$.
$T$, its boundary $\partial T$, and its interior $T^\circ$ will always be equipped with Whitney stratifications induced by $\calS$ (see \cite{GM3} I.1.3.1).

\begin{lem}\label{strathomeo}
Let $\xi$ be tubular data.
Then:
\begin{enumerate}[(i)]
\item the stratified homeomorphism type of $T(\epsilon)$ does not depend on $\epsilon$.
\item the inclusion $\partial T(\epsilon) \hookrightarrow T(\epsilon)-\Sigma$ is a codimension preserving stratified homotopy equivalence.
\item for $0 < \epsilon' < \epsilon < \epsilon_0$ the inclusions $T(\epsilon')^\circ \hookrightarrow T(\epsilon)^\circ$ and $T(\epsilon')^\circ - \Sigma \hookrightarrow T(\epsilon)^\circ - \Sigma$ are stratified homotopy equivalences. 
\end{enumerate}
\end{lem}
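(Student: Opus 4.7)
The plan is to unify all three parts via one application of the subanalytic Thom first isotopy lemma. Since $\{\epsilon_0\}$ is by construction the minimal positive $0$-stratum of $\calS'$, the interval $(0,\epsilon_0)$ is a single open stratum of $\R_{\geq 0}$, and the restriction $f : f^{-1}(0,\epsilon_0) \to (0,\epsilon_0)$ is a proper stratified submersion with respect to $\calS$. Fix any $\eta \in (0,\epsilon_0)$; Thom's lemma will produce a stratum-preserving subanalytic homeomorphism
\[
\Phi : \partial T(\eta) \times (0,\epsilon_0) \xrightarrow{\sim} f^{-1}(0,\epsilon_0)
\]
commuting with the projections to $(0,\epsilon_0)$, and everything below is deduced from $\Phi$.

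For (ii), I would use $\Phi$ to identify the inclusion $\partial T(\epsilon) \hookrightarrow T(\epsilon) - \Sigma$ with $\partial T(\epsilon) \times \{\epsilon\} \hookrightarrow \partial T(\epsilon) \times (0,\epsilon]$, for which the straight-line retraction $(x,t;s) \mapsto (x,(1-s)t + s\epsilon)$ is a stratum-preserving strong deformation retract. Codimension preservation follows because $f|_S$ is a submersion on each stratum $S$ meeting $f^{-1}(0,\epsilon_0)$, so $S$ meets the level $\partial T(\epsilon)$ transversely and the codimensions match on either side of the inclusion. For (iii), I would choose a deformation retract of $(0,\epsilon)$ onto $(0,\epsilon']$ that is the identity on $(0,\delta)$ for some small $\delta$, transport by $\Phi$, and extend by the identity on $f^{-1}[0,\delta)$; this yields a stratum-preserving deformation retract of $T(\epsilon)^\circ$ onto $T(\epsilon')^\circ$, whose restriction off $\Sigma$ handles the complementary claim.

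For (i), fix $0 < \epsilon_1 < \epsilon_2 < \epsilon_0$ and a subanalytic homeomorphism $\psi : [0,\epsilon_2] \to [0,\epsilon_1]$ that is the identity on $[0,\epsilon_1/2]$. Define $H : T(\epsilon_2) \to T(\epsilon_1)$ to be the identity on $f^{-1}[0,\epsilon_1/2]$ and $\Phi\!\left(\pi_1\Phi^{-1}(x),\,\psi(f(x))\right)$ on $f^{-1}[\epsilon_1/2,\epsilon_2]$; the two formulas agree on $f^{-1}(\epsilon_1/2)$ because $\psi(\epsilon_1/2) = \epsilon_1/2$, so $H$ is a well-defined stratified homeomorphism. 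The main obstacle, shared by (i) and (iii), is continuity across $\Sigma$: Thom's trivialization lives only on $f^{-1}(0,\epsilon_0)$, so I anticipate that each rescaling must be chosen to equal the identity on a neighborhood of $0 \in \R_{\geq 0}$, so that the constructed map coincides with the identity on an open neighborhood of $\Sigma$ and continuity is automatic.
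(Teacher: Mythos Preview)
Your approach is essentially the paper's: apply Thom's first isotopy lemma to the proper stratified map $f^{-1}(0,\epsilon_0)\to(0,\epsilon_0)$ to obtain a trivialization, and read off (i)--(iii) from explicit rescalings in the interval direction. The paper's proof is a two-sentence sketch that simply asserts the trivialization and says ``we can use this trivialization to construct\ldots''; you have filled in the mechanics, including the point the paper leaves implicit---that the rescalings must be the identity near $0$ so that the maps glue continuously across $\Sigma$. One small slip: in (iii) you retract $(0,\epsilon)$ onto $(0,\epsilon']$, which yields a retraction of $T(\epsilon)^\circ$ onto $T(\epsilon')$ rather than $T(\epsilon')^\circ$; to conclude for the open tubes, either retract instead onto $(0,\epsilon'']$ for some $\epsilon''<\epsilon'$ and compare both open tubes to $T(\epsilon'')$, or simply note that the inclusion $T(\epsilon')^\circ\hookrightarrow T(\epsilon')$ is itself a stratified homotopy equivalence by the same collar argument.
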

\begin{proof}
By Thom's isotopy lemma \cite{Di} Theorem 1.3.5 and contractibility of the range, the restriction $f^{-1}(0,\epsilon_0) \rightarrow (0, \epsilon_0)$ is a trivial stratified fiber bundle.
We can use this trivialization to construct a stratified homeomorphism $T(\epsilon') \cong T(\epsilon)$ for any $0 < \epsilon' < \epsilon < \epsilon_0$ and to show that the inclusions in question are codimension preserving stratified homotopy equivalences (see \cite{Fr} Definition 2.9.10).
\end{proof}

We say that tubular data $\xi$ is \bi{pseudomanifold compatible (pc)} if and only if $\calS \in \xi$ induces on $X$ the structure of a pseudomanifold.
In other words: the top dimensional strata of $\calS$ are dense and the remaining strata have codimension at least two. 

\textit{Remark.} Tubular data always exists.
Pc tubular data $\xi$ exists for example if $X$ is complex and equidimensional. 
If $\xi$ is pc, then $T$ inherits the structure of a pseudomanifold with boundary, since $\partial T$ admits a collar neighborhood (see \cite{Fr} Definition 2.7.1) by Thom's isotopy lemma.  

\subsection{Intersection Homology and Cohomology}
Let $X$ denote a compact subvariety of a real analytic manifold.
Assume $X$ admits pc tubular data (e.g.\ $X$ is complex and equidimensional).
All tubular data in this subsection is assumed to be pc.

We implicitly draw from \cite{Fr} throughout this subsection.
For basic definitions see for example \cite{Fr} Definition 3.1.4, Remark 3.1.5, and Definition 3.4.1.

We begin by proving that the intersection homologies/cohomologies of a tubular neighborhood $T$ are independent of choices.

\begin{lem}[Invariance]\label{invariance}
The intersection homologies and cohomologies of the tubular neighborhood $T$, its boundary $\partial T$, and the pair $(T,\partial T)$ do not depend on any choices.  	
\end{lem}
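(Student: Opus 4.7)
The plan is to split the independence statement into (i) independence of $\epsilon$ for fixed tubular data $\xi$, and (ii) independence of $\xi$, leveraging the invariance of intersection (co)homology of pseudomanifolds under codimension-preserving stratified homotopy equivalence.

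For (i), Lemma \hyp{strathomeo}(i) provides a stratified homeomorphism $T(\epsilon) \cong T(\epsilon')$ for any $0 < \epsilon' < \epsilon < \epsilon_0$; the Thom-isotopy trivialization behind it also carries boundaries to boundaries and pairs to pairs, so the intersection (co)homologies of $T$, $\partial T$, and $(T,\partial T)$ depend only on $\xi$.

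For (ii), let $\xi_1, \xi_2$ be two choices of pc tubular data with defining functions $f_1,f_2$. Using compactness of each $T_i(\epsilon)$, continuity of the $f_i$, and the condition $f_i^{-1}(0)=\Sigma$, I would iteratively shrink parameters to obtain nested inclusions
\[
T_1(\epsilon_1')^\circ \hookrightarrow T_2(\epsilon_2')^\circ \hookrightarrow T_1(\epsilon_1)^\circ \hookrightarrow T_2(\epsilon_2)^\circ,
\]
whose two length-three composites lie within a single $\xi_i$ and are therefore stratified homotopy equivalences by Lemma \hyp{strathomeo}(iii). A short diagram chase shows that in any four-term sequence of abelian groups whose two length-three composites are isomorphisms, every individual arrow is an isomorphism; consequently $IH_*(T_1(\epsilon_1)^\circ) \cong IH_*(T_2(\epsilon_2)^\circ)$, and running the identical argument with $\Sigma$ removed throughout yields $IH_*(T_1(\epsilon_1)^\circ - \Sigma) \cong IH_*(T_2(\epsilon_2)^\circ - \Sigma)$. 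The collar of $\partial T$ in $T$ (available since $\xi$ is pc, per the Remark following Lemma \hyp{strathomeo}) then supplies stratified homotopy equivalences $T^\circ \hookrightarrow T$ and $T^\circ - \Sigma \hookrightarrow T - \Sigma$, and combining with Lemma \hyp{strathomeo}(ii) identifies $IH_*(T - \Sigma) \cong IH_*(\partial T)$. This settles independence for both $T$ and $\partial T$; the pair statement then follows from the five-lemma applied to the long exact sequences of the pairs $(T_i, \partial T_i)$ under the nested inclusions, and the cohomology assertions are formally parallel.

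I expect the main obstacle to be verifying that the intermediate cross-$\xi$ inclusion $T_2(\epsilon_2')^\circ \hookrightarrow T_1(\epsilon_1)^\circ$ induces a well-defined map on intersection homology, since the two subspaces inherit a priori distinct Whitney stratifications from $\calS_1$ and $\calS_2$. This can be resolved either by invoking the topological invariance of intersection homology for Whitney stratified pseudomanifolds, which makes every arrow in the four-term chain well-defined on intrinsic IH groups, or by producing a common refinement of $\calS_1$ and $\calS_2$ on a neighborhood of the overlap; either way, the diagram chase then produces the required isomorphisms.
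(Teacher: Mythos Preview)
Your proposal is correct and follows essentially the same route as the paper: reduce to open tubes (and open tubes minus $\Sigma$), build a four-term alternating chain of nested tubes for two choices of data, use Lemma~\ref{strathomeo}(iii) to see the two length-two composites are isomorphisms, deduce the middle inclusion is an isomorphism, and finish the pair statement with the five lemma. The only organizational difference is that the paper first reduces \emph{everything} to the pair $(T^\circ, T^\circ-\Sigma)$ via the general fact that $IH_*$ of a pseudomanifold with boundary agrees with $IH_*$ of its interior together with Lemma~\ref{strathomeo}(ii), and then runs the nested-tube argument once; you instead handle part (i) directly via the stratified homeomorphism of Lemma~\ref{strathomeo}(i) and only pass to interiors for part (ii), transferring back afterwards via the collar. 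Both orderings work. Your flagged obstacle about the cross-$\xi$ inclusion inducing a map on $IH_*$ is real and is left implicit in the paper as well; your proposed fix via topological invariance of intersection homology is the intended one.
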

\begin{proof}
By \cite{Fr} Corollary 4.1.11 codimension preserving stratified homotopy equivalences preserve intersection homologies and cohomologies.
Moreover, the intersection homologies of a pseudomanifold with boundary are naturally isomorphic to those of the pseudomanifold minus the boundary.
Together with Lemma \hyp{strathomeo} (ii) we then reduce to checking invariance for $T^\circ$, $T^\circ - \Sigma$, and the pair they form.
By Lemma \hyp{strathomeo} (iii) the choice of $\epsilon$ for fixed $\xi$ doesn't affect these homologies and cohomologies.
 
Let $\xi$ and $\widetilde{\xi}$ be pc tubular data for $X$.
Because $X$ is compact, any tube for one data contains a smaller tube for the other data.
In particular we can construct a sequence of tubes:
\begin{align*}
T_0 \subset \widetilde{T} \subset T \subset \widetilde{T}_0 	
\end{align*}
corresponding in an alternating fashion to $\xi$ and $\widetilde{\xi}$.
We may restrict these inclusions to the open tubes $(-)^\circ$ or to the open tubes minus the singular set $(-)^\circ - \Sigma$.
Any composition of two of these restricted inclusions by Lemma \hyp{strathomeo} (iii) induces isomorphisms on intersection homologies and cohomologies.
A simple argument shows that the same is true for the central inclusions, namely in the commutative diagram:
$$\begin{tikzcd}
T^\circ - \Sigma \ar[hookrightarrow]{r} & T^\circ\\
\widetilde{T}^\circ - \Sigma \ar[hookrightarrow]{u} \ar[hookrightarrow]{r} & \widetilde{T}^ \circ \ar[hookrightarrow]{u}	
\end{tikzcd}$$ 
the vertical maps induces isomorphisms on intersection homologies and cohomologies.
From naturality of long exact sequences of pairs and the five lemma, we obtain also an isomorphism for intersection homologies and cohomologies of the pair.
We have successfully compared the tubular data $\xi$ and $\widetilde{\xi}$.  
\end{proof}

\textit{Remark.} Our definition of tubular neighborhood is actually unnecessarily restrictive.
If $T(\epsilon)$ for $0 < \epsilon < \epsilon_0$ is any increasing family of closed neighborhoods of $\Sigma$ such that:
\begin{enumerate}[(i)]
	\item $\bigcap_{\epsilon > 0} T(\epsilon) = \Sigma$
	\item the $T(\epsilon)$ are pseudomanifolds whose boundary $\partial T(\epsilon)$ is a submanifold of $X-\Sigma$
	\item  the $T(\epsilon)$ satisfy the conditions of Lemma \hyp{strathomeo}
\end{enumerate}
then such a $T(\epsilon)$ will do equally well, and will not affect the validity of Lemma \hyp{invariance}.
We will use such a tubular neighborhood in Section \hyp{Example}.

\medskip

The duality captured by any intersection space construction is inseparable from a Lefschetz duality described by the results of intersection homology and cohomology. 
To this end we will need orientability.
If $X$ as a pseudomanifold is oriented, then a tubular neighborhood $T$ inherits an orientation from $X$.  

\begin{lem}\label{tubneighdual}
Let $(\bar{p}, \bar{q})$ be complementary perversity functions, $A$ an abelian group, and $T$ a tubular neighborhood of $\Sigma$.
Then there is an exact triangle of (intersection) chain complexes:
\begin{align*}C_\bullet(\partial T; A) \rightarrow IC^{\bar{p}}_\bullet(T; A) \rightarrow IC^{\bar{p}}_\bullet(T,\partial T; A) \xrightarrow{-1}\end{align*}
and of (intersection) cochain complexes:
\begin{align*}IC_{\bar{q}}^\bullet(T,\partial T;A) \rightarrow IC_{\bar{q}}^\bullet(T;A) \rightarrow C^\bullet(\partial T;A) \xrightarrow{+1}.\end{align*}
If $X$ is oriented of dimension $n$, then there is a natural duality isomorphism between their (shifted) associated long exact sequences:
$$\begin{tikzcd}
\cdots \ar[r] & H_i(\partial T;A) \ar[d, "D", "\simeq" {anchor=south, rotate=90, inner sep=1mm}] \ar[r] & IH^{\bar{p}}_i(T;A) \ar[d, "D", "\simeq" {anchor=south, rotate=90, inner sep=1mm}] \ar[r] & IH^{\bar{p}}_i(T,\partial T;A) \ar[d, "D", "\simeq" {anchor=south, rotate=90, inner sep=1mm}] \ar[r] & \cdots\\
\cdots \ar[r] & H^{n-i-1}(\partial T;A) \ar[r] & IH_{\bar{q}}^{n-i}(T,\partial T;A) \ar[r] & IH_{\bar{q}}^{n-i}(T;A) \ar[r] & \cdots  
\end{tikzcd}$$
\end{lem}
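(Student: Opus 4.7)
The plan is to produce both exact triangles as triangulations of short exact sequences of the pair $(T, \partial T)$, and then assemble the duality diagram by combining Lefschetz duality on $T$ with Poincar\'e duality on $\partial T$, knitted together by naturality of the cap product.

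For the exact triangles, the key observation is that $\partial T$ is a submanifold of $X - \Sigma$, so the stratification induced on it from $\calS$ contains no singular strata. Consequently, for every perversity the intersection chain/cochain complexes on $\partial T$ tautologically coincide with the ordinary ones: $IC^{\bar{p}}_\bullet(\partial T;A) = C_\bullet(\partial T;A)$ and $IC_{\bar{q}}^\bullet(\partial T;A) = C^\bullet(\partial T;A)$. The short exact sequences of (intersection) chain and cochain complexes of the pair
\begin{align*}
0 \to IC^{\bar{p}}_\bullet(\partial T;A) \to IC^{\bar{p}}_\bullet(T;A) \to IC^{\bar{p}}_\bullet(T, \partial T;A) \to 0, \\
0 \to IC_{\bar{q}}^\bullet(T, \partial T;A) \to IC_{\bar{q}}^\bullet(T;A) \to IC_{\bar{q}}^\bullet(\partial T;A) \to 0
\end{align*}
then yield the desired exact triangles via \cite{W} Example 10.4.9, exactly as in the proofs of Lemmas \hyp{indexacttri} and \hyp{coindexacttri}.

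For the duality diagram, since $X$ is oriented and the tubular data is pc, the remark following Lemma \hyp{strathomeo} makes $T$ into an oriented $n$-dimensional pseudomanifold with boundary and $\partial T$ an oriented $(n-1)$-manifold. Lefschetz duality in intersection (co)homology (see \cite{Fr} Chapter 9) supplies cap-product isomorphisms $IH^{\bar{p}}_i(T;A) \cong IH_{\bar{q}}^{n-i}(T, \partial T;A)$ and $IH^{\bar{p}}_i(T, \partial T;A) \cong IH_{\bar{q}}^{n-i}(T;A)$, while classical Poincar\'e duality on $\partial T$ supplies $H_i(\partial T;A) \cong H^{n-i-1}(\partial T;A)$. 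These furnish the three vertical maps in the diagram.

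What remains is to verify commutativity of every square, i.e., that these three cap-product duality isomorphisms intertwine the connecting homomorphisms of the two long exact sequences. The underlying chain-level identity is the Leibniz-type relation $\partial([T] \frown \alpha) = \pm [\partial T] \frown (\alpha|_{\partial T})$, together with $\partial [T] = [\partial T]$ for the fundamental class; this is the main obstacle, since in the intersection setting the cap product only returns a chain in the expected intersection complex after an appropriate shift of perversities, so sign and chain-level bookkeeping must be handled with care. Fortunately, Friedman \cite{Fr} constructs the Lefschetz duality maps explicitly via such cap products and establishes the naturality of his construction with respect to the pair $(T, \partial T)$, so we may quote his results to conclude.
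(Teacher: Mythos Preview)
Your proposal is correct and follows essentially the same route as the paper: identify (intersection) chains on the nonsingular boundary with ordinary chains, obtain the triangles from the short exact sequences of the pair, and cite Friedman for the commuting Lefschetz duality ladder. Two minor points: the paper phrases the boundary identification as a quasi-isomorphism rather than an equality (safer if the induced stratification on $\partial T$ is nontrivial), and the precise reference is \cite{Fr} Corollary 8.3.10 rather than Chapter 9.
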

\begin{proof}
Because $\partial T$ is non-singular (recall the definition of tubular neighborhoods) there are quasi-isomorphisms $IC^{\bar{p}}_\bullet(\partial T; A) \simeq C_\bullet(\partial T; A)$ and $IC_{\bar{q}}^\bullet(\partial T; A) \simeq C^\bullet(\partial T;A)$.
We may therefore replace the former with the latter when we only care about complexes up to quasi-isomorphism.

By the definition of the relative intersection chain complex given in \cite{Fr} Definition 4.3.7, there is a short exact sequence of chain complexes:
\begin{align*}0 \rightarrow IC^{\bar{p}}_\bullet(\partial T;A) \rightarrow IC^{\bar{p}}_\bullet(T;A) \rightarrow IC^{\bar{p}}_\bullet(T,\partial T;A) \rightarrow 0.\end{align*}
This (together with the first paragraph) produces the first distinguished triangle.
The second is obtained analogously.
The duality isomorphism between their shifted long exact sequences is described in the proof of \cite{Fr} Corollary 8.3.10.

\end{proof}

\subsection{Linear Algebra}
We will exclusively use field coefficients for the main results of the paper.
To this end, we let $k$ denote a field and establish a few lemmas.

\begin{lem}\label{dualmap}
Suppose $f: A \rightarrow B$ is a morphism of $k$-vector spaces with dual map $f^*: B^* \rightarrow A^*$.
Then there are natural identifications:
\begin{align*}
&\left(coker\,f\right)^* = ker\,(f^*)\\
&\left(im\,f\right)^* = coim\,(f^*).
\end{align*} 
\end{lem}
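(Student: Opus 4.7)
The plan is to exploit the fact that $(-)^* = \Hom_k(-,k)$ is an exact (contravariant) functor on the category of $k$-vector spaces, since every $k$-module is both injective and projective. So any short exact sequence of vector spaces dualizes to a short exact sequence, and in particular kernels get taken to cokernels and vice versa.

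For the first identification, I would start from the defining right-exact sequence of the cokernel,
\begin{align*}
A \xrightarrow{f} B \longrightarrow \coker f \longrightarrow 0,
\end{align*}
and dualize to obtain
\begin{align*}
0 \longrightarrow (\coker f)^* \longrightarrow B^* \xrightarrow{f^*} A^*,
\end{align*}
which exhibits $(\coker f)^*$ as the kernel of $f^*$, i.e.\ $(\coker f)^* = \ker f^*$. This identification is natural in $f$ because both sides are built from $(-)^*$, $f^*$, and the universal property of the kernel.

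For the second identification, I would begin with the short exact sequence
\begin{align*}
0 \longrightarrow \im f \longrightarrow B \longrightarrow \coker f \longrightarrow 0,
\end{align*}
which on dualization (using exactness of $(-)^*$ on vector spaces) yields the short exact sequence
\begin{align*}
0 \longrightarrow (\coker f)^* \longrightarrow B^* \longrightarrow (\im f)^* \longrightarrow 0.
\end{align*}
Combining with the first part, $(\coker f)^* = \ker f^*$, we read off $(\im f)^* \cong B^*/\ker f^* = \coim f^*$, again naturally.

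There isn't really a main obstacle here; the only thing to be slightly careful about is keeping track of variances and verifying that the chain of isomorphisms produced is canonical rather than depending on splittings. Both identifications are constructed purely out of functorial operations ($(-)^*$, kernel, cokernel, image, coimage) applied to $f$, so naturality in $f$ follows formally.
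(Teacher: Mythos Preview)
Your proof is correct and follows essentially the same approach as the paper: dualize the right-exact sequence defining $\coker f$ to obtain the first identification, then dualize the short exact sequence $0 \to \im f \to B \to \coker f \to 0$ and use the first identification to obtain the second.
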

\begin{proof}
We dualize the exact sequence:
\begin{align*}A \xrightarrow{f} B \rightarrow coker\, f \rightarrow 0\end{align*}
to obtain exact:
\begin{align*}0 \rightarrow (coker\,f)^* \rightarrow B^* \xrightarrow{f^*} A^*,\end{align*}
which proves the first identification by showing that $(coker\,f)^*$ maps isomorphically onto $ker\,(f^*)$.
We prove the second identification by dualizing the exact sequence:
\begin{align*}0 \rightarrow im\,f \rightarrow B \rightarrow coker\,f \rightarrow 0\end{align*}
to obtain exact:
\begin{align*}0 \rightarrow (coker\,f)^* \rightarrow B^* \rightarrow (im\,f)^* \rightarrow 0\end{align*} and utilizing the first identification $(coker\, f)^* = ker\,(f^*)$.
\end{proof}

\begin{lem}\label{dualconstruction}
Suppose there is a commutative diagram of exact sequences of $k$-vector spaces:
$$
\begin{tikzcd}
\cdots \ar[r] & C_{i+1}  \ar[r, "\partial_{i+1}"] & A_i \ar[r, "g_i"] & B_i \ar[r, "h_i"] & C_i \ar[r, "\partial_i"] & A_{i-1} \ar[r] & \cdots\\
\cdots \ar[r] & F_{i+1} \ar[u, "D''_{i+1}", "\simeq" {anchor=south, rotate=-90, inner sep=1mm}]  \ar[r, "\delta_{i+1}"] & D_i \ar[u, "D'_i", "\simeq" {anchor=south, rotate=-90, inner sep=1mm}] \ar[r, "u_i"] & E_i \ar[r, "v_i"] & F_i \ar[u, "D''_i", "\simeq" {anchor=south, rotate=-90, inner sep=1mm}] \ar[r, "\delta_i"] & D_{i-1} \ar[u, "D'_{i-1}", "\simeq" {anchor=south, rotate=-90, inner sep=1mm}] \ar[r] & \cdots .
\end{tikzcd}
$$
Then for each pair $(r_\bullet, s_\bullet)$, where $r_\bullet: im\, h_\bullet \rightarrow B_\bullet$ and $s_\bullet: im\, v_\bullet \rightarrow E_\bullet$ are (families) of sections, 
there exists an induced isomorphism:
\begin{align*} D_\bullet = D^{(r_\bullet, s_\bullet)}_\bullet: E_\bullet \xrightarrow{\simeq} B_\bullet \end{align*}
whose description is found in the proof.
\end{lem}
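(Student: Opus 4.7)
My plan is to exploit the exactness of each row to decompose the middle terms as direct sums (using the supplied sections), and then transport one summand to the other via the vertical isomorphisms.

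First I would extract, from the exactness of the top row, the short exact sequence
\begin{align*}
0 \rightarrow \im g_i \rightarrow B_i \rightarrow \im h_i \rightarrow 0
\end{align*}
together with the identifications $\im g_i = \ker h_i$, $\im h_i = \ker \partial_i$, and $\ker g_i = \im \partial_{i+1}$. The section $r_i: \im h_i \to B_i$ splits this, giving $B_i = \im g_i \oplus r_i(\im h_i)$. I would do the same for the bottom row to obtain $E_i = \im u_i \oplus s_i(\im v_i)$.

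Second, I would build canonical isomorphisms between the corresponding summands using the given vertical maps and the commutativity of the diagram. For the ``kernel'' summands: since $D''_{i+1}$ is surjective and the left square commutes, $D'_i$ sends $\im \delta_{i+1} = \ker u_i$ isomorphically onto $\im \partial_{i+1} = \ker g_i$, hence descends to an isomorphism $\overline{D'_i}: \im u_i \xrightarrow{\simeq} \im g_i$ satisfying $\overline{D'_i} \circ u_i = g_i \circ D'_i$. For the ``cokernel'' summands: since $D'_{i-1}$ is injective and the right square commutes, $D''_i$ restricts to an isomorphism $\overline{D''_i}: \ker \delta_i \xrightarrow{\simeq} \ker \partial_i$, that is $\overline{D''_i}: \im v_i \xrightarrow{\simeq} \im h_i$, satisfying $\overline{D''_i} \circ v_i = h_i \circ D'_i$ when restricted appropriately.

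Third, I would package everything: define
\begin{align*}
D_i = D_i^{(r_\bullet, s_\bullet)} : E_i = \im u_i \oplus s_i(\im v_i) \longrightarrow \im g_i \oplus r_i(\im h_i) = B_i
\end{align*}
by applying $\overline{D'_i}$ to the first summand and applying $r_i \circ \overline{D''_i} \circ s_i^{-1}$ to the second. Both constituent maps are isomorphisms by construction, so $D_i$ is an isomorphism.

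The only real obstacle is bookkeeping rather than mathematical content: one must verify that $D'_i$ really does send $\ker u_i$ into $\ker g_i$ and $D''_i$ sends $\ker \delta_i$ into $\ker \partial_i$, which is immediate from commutativity of the two squares straddling the given position together with, respectively, the surjectivity of $D''_{i+1}$ and the injectivity of $D'_{i-1}$. No naturality in $(r_\bullet, s_\bullet)$ is claimed, so no further compatibility check is needed.
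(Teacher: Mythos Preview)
Your proposal is correct and follows essentially the same route as the paper: both extract from each row the short exact sequence $0 \to \im g_i \to B_i \to \im h_i \to 0$ (and its analogue below), split it via the given sections, show that $D'_i$ and $D''_i$ induce isomorphisms $\im u_i \xrightarrow{\simeq} \im g_i$ and $\im v_i \xrightarrow{\simeq} \im h_i$, and then define $D_i$ summand-wise; your formula $r_i \circ \overline{D''_i} \circ s_i^{-1}$ on the second summand and $\overline{D'_i}$ on the first is exactly the paper's prescription $D_i(u_i\alpha)=g_iD'_i(\alpha)$, $D_i(s_i\beta)=r_iD''_i(\beta)$. One small slip: the relation you write as ``$\overline{D''_i}\circ v_i = h_i\circ D'_i$'' does not type-check (the right-hand side would require a map $D_i\to B_i$), but you never use it, so it does not affect the argument.
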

\begin{proof}
This is the content of \cite[Lemma 2.46]{Ba1}.
We recreate the argument here, because it is important in Section \hyp{IntersectionSpacePairing} to understand exactly how $D_\bullet^{(r_\bullet, s_\bullet)}$ relates to the the choice of $(r_\bullet, s_\bullet)$.
  
Because the diagram commutes and the rows are exact, there is an induced diagram of exact sequences:
$$
\begin{tikzcd}
0 \ar[r] & \overbrace{coker\, \partial_{i+1}}^{\simeq ker\, h_i} \ar[r, "\bar{g}_i"] & B_i \ar[r, "h_i"] & \overbrace{ker\, \partial_i}^{im\, h_i} \ar[r] & 0 \\
0 \ar[r] & \underbrace{coker\, \delta_{i+1}}_{\simeq ker\, k_i} \ar[u, "D'_i", "\simeq" {anchor=south, rotate=-90, inner sep=1mm}] \ar[r, "\bar{u}_i"] & E_i \ar[r, "v_i"] & \underbrace{ker\, \delta_i}_{im\, v_i} \ar[u, "D''_i", "\simeq" {anchor=south, rotate=-90, inner sep=1mm}] \ar[r] & 0 
\end{tikzcd}
$$
where we are abusing notation by allowing $D'_i$ and $D''_i$ to denote induced isomorphisms.
The sections $r_i$ and $s_i$ give splittings
\begin{align*}
&B_i = im\, \bar{g}_i \oplus im\, r_i = im\, g_i \oplus im\, r_i\\
&E_i = im\, \bar{u}_i \oplus im\, s_i = im\, u_i \oplus im\, s_i.
\end{align*}
The isomorphism $D_i: E_i \rightarrow B_i$ induced by the splittings $(r_\bullet, s_\bullet)$ is described on components as follows.
\begin{align*}
&D_i(u_i\alpha) = g_iD'_i(\alpha), ~ \alpha \in D_i\\
&D_i(s_i\beta) = r_iD''_i(\beta), ~\beta \in im\, v_i \subset F_i
\end{align*}
\end{proof}

\section{Local Construction}\label{LocalConstruction}

\subsection{Denotations and Assumptions}\label{DenAss} Throughout Section \hyp{LocalConstruction} we let $k$ denote a field and $X$ a compact subvariety of a real analytic manifold with singular set $\Sigma$.
Assume $X$ admits pc tubular data and is oriented of dimension $n$ (e.g.\ $X$ is complex and equidimensional). 
Let $T$ denote a pc tubular neighborhood of $\Sigma$.

If $\bar{p}$ is a perversity function, for $i \in \Z$ we define: 
\begin{align*}
Z^{\bar{p}}_i &= im\left[H_i(\partial T;k) \rightarrow IH^{\bar{p}}_i(T;k)\right],~ Z_{\bar{p}}^i = (Z^{\bar{p}}_i)^*\\
Y^{\bar{p}}_i &= coker\left[IH^{\bar{p}}_i(T;k) \rightarrow IH^{\bar{p}}_i(T,\partial T;k)\right],~ Y_{\bar{p}}^i = (Y^{\bar{p}}_i)^*.
\end{align*}
We also write $Z^{\bar{p}}_\bullet$ and $Y^{\bar{p}}_\bullet$ (resp. $Z_{\bar{p}}^\bullet$ and $Y_{\bar{p}}^\bullet$) if we'd like to interpret these collections of vector spaces as chain (resp. cochain) complexes with zero differential.

\subsection{Duality and the Image of the Boundary}\label{DualityImage}
Let $\bar{p}$ denote a perversity function.
To begin, we'd like to overcome the obstruction discussed in Section \hyp{Motivation}.
It will be essential to work with the \textit{image} of $H_*(\partial T;k) \rightarrow IH^{\bar{p}}_*(T;k)$ as opposed to $IH^{\bar{p}}_*(T;k)$ itself, the latter being more in line with the original approach.
We've already denoted this collection of vector spaces by $Z^{\bar{p}}_*$.
The first step in this transition is to understand the ``Lefschetz dual'' object to $Z^{\bar{p}}_*$, in the sense of Theorem \hyp{tubneighdual}. 
 
\begin{lem}\label{identifications}
For all $i \in \Z$ there are natural identifications:
\begin{align*}
Z_{\bar{p}}^i &= coim\left[IH_{\bar{p}}^i(T;k) \rightarrow H^i(\partial T;k)\right]\\
Y_{\bar{p}}^i &= ker\left[IH_{\bar{p}}^i(T,\partial T;k) \rightarrow IH_{\bar{p}}^i(T;k)\right] 
\end{align*}
\end{lem}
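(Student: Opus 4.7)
The plan is to reduce both identifications to the linear-algebraic statement of Lemma \hyp{dualmap}, once we identify intersection cohomology with the dual of intersection homology (which is legitimate since we work over a field $k$) and verify that the cohomological maps in the exact triangle of Lemma \hyp{tubneighdual} are the $k$-linear duals of the corresponding homological ones.

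First I would record the observation that for any of the spaces or pairs involved (which is $\partial T$, $T$, or $(T,\partial T)$), the intersection cochain complex is (quasi-isomorphic to) the $k$-linear dual of the intersection chain complex, and hence on passing to (co)homology we obtain natural isomorphisms $IH_{\bar{p}}^i(-;k) \cong (IH_i^{\bar{p}}(-;k))^*$, and likewise for ordinary (co)homology of $\partial T$. Moreover, the two exact triangles in Lemma \hyp{tubneighdual} come from one and the same short exact sequence of intersection chain complexes (and its $k$-linear dual), so the connecting maps in the cochain long exact sequence $\cdots \to IH_{\bar{p}}^i(T,\partial T;k) \to IH_{\bar{p}}^i(T;k) \to H^i(\partial T;k) \to \cdots$ are precisely the $k$-linear duals of the maps in the homological long exact sequence $\cdots \to H_i(\partial T;k) \to IH^{\bar{p}}_i(T;k) \to IH^{\bar{p}}_i(T,\partial T;k) \to \cdots$.

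Next, for the first identification, I would apply the second equality of Lemma \hyp{dualmap} to the map $f: H_i(\partial T;k) \to IH^{\bar{p}}_i(T;k)$, whose dual is $f^* : IH_{\bar{p}}^i(T;k) \to H^i(\partial T;k)$ by the previous paragraph. This gives
\begin{align*}
Z_{\bar{p}}^i = (\im f)^* = \coim(f^*) = \coim\left[IH_{\bar{p}}^i(T;k) \to H^i(\partial T;k)\right],
\end{align*}
as desired. For the second identification, apply the first equality of Lemma \hyp{dualmap} to $g : IH^{\bar{p}}_i(T;k) \to IH^{\bar{p}}_i(T,\partial T;k)$, whose dual is $g^* : IH_{\bar{p}}^i(T,\partial T;k) \to IH_{\bar{p}}^i(T;k)$, to obtain
\begin{align*}
Y_{\bar{p}}^i = (\coker g)^* = \ker(g^*) = \ker\left[IH_{\bar{p}}^i(T,\partial T;k) \to IH_{\bar{p}}^i(T;k)\right].
\end{align*}

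The only non-cosmetic step is the identification of the dual map with the expected cohomological one; the remaining computations are purely formal. Since both the chain and cochain exact triangles in Lemma \hyp{tubneighdual} are manufactured from the same short exact sequence (and its dual) via Definition 4.3.7 of \cite{Fr}, naturality makes this identification immediate, so I do not expect any real obstacle.
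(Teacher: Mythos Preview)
Your proposal is correct and follows essentially the same approach as the paper: invoke universal coefficients to identify (intersection) cohomology with the $k$-linear dual of (intersection) homology, then apply the two identifications of Lemma~\ref{dualmap} to the maps $H_i(\partial T;k)\to IH^{\bar p}_i(T;k)$ and $IH^{\bar p}_i(T;k)\to IH^{\bar p}_i(T,\partial T;k)$. Your added remark that the cochain-level maps are the $k$-duals of the chain-level ones (because both exact triangles arise from the same short exact sequence) is the only point the paper leaves implicit under the blanket phrase ``apply universal coefficients.''
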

\begin{proof}
If we apply universal coefficients (see \cite{Fr} Theorem 7.1.4 for the intersection cohomology version of universal coefficients) and the second identification of Lemma \hyp{dualmap} to the map $H_i(\partial T;k) \rightarrow IH^{\bar{p}}_i(T;k)$ then we obtain the identification:
\begin{align*}Z_{\bar{p}}^i = coim \left[IH_{\bar{p}}^i(T;k) \rightarrow H^i(\partial T;k)\right].\end{align*}
If we next apply universal coefficients and the first identification of Lemma \hyp{dualmap} to the map $IH^{\bar{p}}_i(T;k) \rightarrow IH^{\bar{p}}_i(T,\partial T;k)$ then we obtain the identification:
\begin{align*}Y_{\bar{p}}^i = ker \left[IH_{\bar{p}}^i(T,\partial T;k) \rightarrow IH_{\bar{p}}^i(T;k)\right].\end{align*} 
\end{proof}

\begin{lem}\label{lefschetzdual}
Suppose $(\bar{p},\bar{q})$ are complementary perversity functions.
There exists an exact triangle of chain complexes with zero differential:
\begin{align*}H_\bullet(\partial T; k) \rightarrow Z^{\bar{p}}_\bullet \rightarrow Y^{\bar{p}}_\bullet \xrightarrow{-1}\end{align*}
and an exact triangle of cochain complexes with zero differential:
\begin{align*}Y_{\bar{q}}^\bullet \rightarrow Z_{\bar{q}}^\bullet \rightarrow H^\bullet(\partial T;k) \xrightarrow{+1}.\end{align*}
Moreover there is a natural duality isomorphism between their (shifted) long exact sequences:
$$\begin{tikzcd}
\cdots \ar[r] & H_i(\partial T; k) \ar[d,"D", "\simeq" {anchor=south, rotate=90, inner sep=1mm}] \ar[r] & Z^{\bar{p}}_i \ar[d,"D","\simeq" {anchor=south, rotate=90, inner sep=1mm}] \ar[r] & Y^{\bar{p}}_i \ar[d,"D", "\simeq" {anchor=south, rotate=90, inner sep=1mm}] \ar[r] & \cdots\\
\cdots \ar[r] & H^{n-1-i}(\partial T;k) \ar[r] & Y_{\bar{q}}^{n-i} \ar[r] & Z_{\bar{q}}^{n-i} \ar[r] & \cdots
\end{tikzcd}$$
\end{lem}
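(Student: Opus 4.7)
The plan is to assemble the result mechanically from the preceding lemmas, which already carry out the substantive work. For the first exact triangle, I apply Lemma \hyp{indexacttri} directly to the chain exact triangle
\[ C_\bullet(\partial T;k) \to IC^{\bar p}_\bullet(T;k) \to IC^{\bar p}_\bullet(T,\partial T;k) \xrightarrow{-1} \]
of Lemma \hyp{tubneighdual}. The ``image'' and ``cokernel'' objects appearing in Lemma \hyp{indexacttri} are by definition $Z^{\bar p}_\bullet$ and $Y^{\bar p}_\bullet$, so its output is precisely the first triangle in the statement. For the second triangle, I apply Lemma \hyp{coindexacttri} to the cochain exact triangle
\[ IC_{\bar q}^\bullet(T,\partial T;k) \to IC_{\bar q}^\bullet(T;k) \to C^\bullet(\partial T;k) \xrightarrow{+1} \]
of Lemma \hyp{tubneighdual}; here the ``coimage'' and ``kernel'' objects are identified with $Z_{\bar q}^\bullet$ and $Y_{\bar q}^\bullet$ by Lemma \hyp{identifications}, producing the desired triangle.

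For the duality ladder, Lemma \hyp{tubneighdual} already furnishes a commutative ladder of vertical isomorphisms between the two original long exact sequences. Transporting along this ladder identifies $Z^{\bar p}_i = \im[H_i(\partial T;k) \to IH^{\bar p}_i(T;k)]$ with $\im[H^{n-i-1}(\partial T;k) \to IH_{\bar q}^{n-i}(T,\partial T;k)]$, which equals $Y_{\bar q}^{n-i}$ by exactness and Lemma \hyp{identifications}. It identifies $Y^{\bar p}_i = \coker[IH^{\bar p}_i(T;k) \to IH^{\bar p}_i(T,\partial T;k)]$ with $\coker[IH_{\bar q}^{n-i}(T,\partial T;k) \to IH_{\bar q}^{n-i}(T;k)]$, which equals $Z_{\bar q}^{n-i}$ by exactness, the $\coim \cong \im$ identification for vector spaces, and Lemma \hyp{identifications}. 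Combined with the given $H_i(\partial T;k) \cong H^{n-i-1}(\partial T;k)$, these supply the three required vertical isomorphisms.

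Commutativity of each square of the new ladder then comes at no additional cost: by Lemmas \hyp{indexacttri}--\hyp{coindexacttri} the horizontal maps of the new long exact sequences are induced from the maps of the original sequences, and the original ladder commutes by Lemma \hyp{tubneighdual}. I expect the only mild obstacle to be the cosmetic one of tracking the nature of each horizontal map in the new sequences (surjection onto image, the zero map between a ``$Z$'' and a ``$Y$'', inclusion as kernel) to confirm that the vertical duality isomorphisms intertwine them correctly; no substantively new input is required.
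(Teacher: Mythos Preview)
Your proposal is correct and follows essentially the same route as the paper: both derive the two triangles by applying Lemmas \ref{indexacttri} and \ref{coindexacttri} to the triangles of Lemma \ref{tubneighdual} (invoking Lemma \ref{identifications} for the cochain side), and both obtain the duality ladder by transporting images and cokernels through the commutative diagram of Lemma \ref{tubneighdual}, again using Lemma \ref{identifications} to match $D(Z^{\bar p}_i)$ with $Y_{\bar q}^{n-i}$ and the induced map on cokernels with $Z_{\bar q}^{n-i}$. The paper likewise notes that commutativity is automatic because every map is induced from the already-commutative diagram.
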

\begin{proof}
Existence of the first exact triangle is a direct consequence of Lemma \hyp{indexacttri} and Theorem \hyp{tubneighdual}.
Existence of the second exact triangle is a consequence of Lemma \hyp{coindexacttri}, Theorem \hyp{tubneighdual}, and Lemma \hyp{identifications}.

For the isomorphism of long exact sequences, we first recall the diagram from Theorem \hyp{tubneighdual}:
$$\begin{tikzcd}
\cdots \ar[r] & H_i(\partial T;R) \ar[d, "D", "\simeq" {anchor=south, rotate=90, inner sep=1mm}] \ar[r] & IH^{\bar{p}}_i(T;R) \ar[d, "D", "\simeq" {anchor=south, rotate=90, inner sep=1mm}] \ar[r] & IH^{\bar{p}}_i(T,\partial T;R) \ar[d, "D", "\simeq" {anchor=south, rotate=90, inner sep=1mm}] \ar[r] & \cdots\\
\cdots \ar[r] & H^{n-i-1}(\partial T;R) \ar[r] & IH_{\bar{q}}^{n-i}(T,\partial T;R) \ar[r] & IH_{\bar{q}}^{n-i}(T;R) \ar[r] & \cdots  
\end{tikzcd}.$$
Because \textit{this} diagram is an isomorphism of long exact sequences, appealing to the definition of $Z^{\bar{p}}_i$ we have:
\begin{align*}
D(Z^{\bar{p}}_i) &= im \left[H^{n-1-i}(\partial T;k) \rightarrow IH_{\bar{q}}^{n-i}(T,\partial T;k)\right]\\
&= ker \left[IH_{\bar{q}}^{n-i}(T,\partial T;k) \rightarrow IH_{\bar{q}}^{n-i}(T;k)\right] = Y_{\bar{q}}^{n-i}
\end{align*} 
where in the last step we have used Lemma \hyp{identifications}.
This provides us the middle isomorphism of the desired diagram.

To construct the rightmost isomorphism of the desired diagram we first observe that:
\begin{align*}
D\left(im \left[IH^{\bar{p}}_i(T;k) \rightarrow IH^{\bar{p}}_i(T,\partial T;k)\right]\right) &= im \left[IH_{\bar{q}}^{n-i}(T,\partial T;k) \rightarrow IH_{\bar{q}}^{n-i}(T;k)\right]\\
&= ker \left[IH_{\bar{q}}^{n-i}(T;k) \rightarrow H^{n-i}(\partial T;k)\right]. 
\end{align*}
Therefore $D$ induces an isomorphism between $Y^{\bar{p}}_i$ and $Z_{\bar{q}}^{n-i}$ after we make the identification of Lemma \hyp{identifications}.

That these isomorphisms fit into a \textit{commutative} diagram follows from the fact that all our maps are induced from the \textit{already existing} commutative diagram of long exact sequences from which we have been drawing. 
\end{proof}

\subsection{Local Intersection Approximation}
Let $\bar{p}$ denote a perversity function.
The results of Section \hyp{DualityImage} will be the source of various desired properties for our intersection space construction. 
Banagl constructs his intersection spaces by first selecting a ``local approximation'', local in the sense that it takes as input only the tubular neighborhood of the singular set.
We will do the same, but will also loosen some constraints by allowing an approximation which is merely ``algebraic'', not necessarily topological.    
\\

\noindent A \bi{$\bar{p}$ algebraic intersection approximation for $T$ with coefficients in $k$} is a pair $(A_\bullet,f_\bullet)$ where $A_\bullet$ is a chain complex of $k$-vector spaces and:
\begin{align*} f_\bullet: A_\bullet \rightarrow C_\bullet(\partial T;k)\end{align*}  
is a chain map such that the composition:
\begin{align*}H_\bullet(A_\bullet) \xrightarrow{f_*} H_\bullet(\partial T;k) \rightarrow Z^{\bar{p}}_\bullet\end{align*}
is an isomorphism.
A \bi{$\bar{p}$ topological intersection approximation for $T$ with coefficients in $k$} is a pair $(A,f)$ where $A$ is a topological space and $f: A \rightarrow \partial T$ is a continuous map such that $(C_\bullet(A;k), f_{\#})$ is a local $\bar{p}$ algebraic intersection approximation.

Observe that the class of such approximations does not depend on the particular choice of tubular neighborhood (see Lemmas \hyp{strathomeo} and \hyp{invariance} and their proofs). 
These are extensions of Banagl's ``approximations'' as indicated by the following examples.

\begin{ex}
Suppose $\Sigma = \{x\}$ is a single point.
Then the tubular neighborhood of $\Sigma$ is conic: $T = cL$ with cone point $x$ where $L$ is called the link of $x$.
So $\partial T = L$ and the cone formula (see \cite{Fr} Theorem 4.2.1) implies:
\begin{align*}IH^{\bar{p}}_\bullet(T;k) = H_\bullet^{<n-1-\bar{p}(n)}(L;k)\end{align*}
where $H_i^{<n-1-\bar{p}(n)}(L;k)$ agrees with $H_i(L;k)$ for $i < n-1-\bar{p}(n)$ and vanishes otherwise.
The map:
\begin{align*}H_\bullet(L;k) \rightarrow H_\bullet^{<n-1-\bar{p}(n)}(L;k)\end{align*}
is surjective so that $Z^{\bar{p}}_\bullet = H_\bullet^{<n-1-\bar{p}(n)}(L;k)$.
Consider a Moore approximation (defined in \cite{Ba1}):
\begin{align*}f: L_{<n-1-\bar{p}(n)} \rightarrow L.\end{align*}
By its defining properties, the composition:
\begin{align*}H_\bullet(L_{<n-1-\bar{p}(n)};\Q) \xrightarrow{f_*} H_\bullet(L;\Q) \rightarrow H_\bullet^{<n-1-\bar{p}(n)}(L;\Q)\end{align*}
is an isomorphism.
Therefore $(L_{<n-1-\bar{p}(n)},f)$ is a $\bar{p}$ topological intersection approximation for $T$ with coefficients in $\Q$. $\sslash$
\end{ex}

\begin{ex}\label{BC1}
Suppose $X$ has a Whitney stratification consisting of exactly two strata $\{X-\Sigma, \Sigma\}$ where $\Sigma$ has codimension $c$.
In particular, this means that $\Sigma$ is smooth and connected. 
Suppose also that $T$ is homeomorphic to the mapping cylinder of a fiber bundle projection $\partial T \rightarrow \Sigma$ (e.g.\ $T$ is a tubular neighborhood in the Thom-Mather sense).
Suppose there exists a fiberwise truncation (defined in \cite{BaC}):
\begin{align*}f: ft_{<c-1-\bar{p}(c)}\partial T \rightarrow \partial T.\end{align*}
By \cite{BaC} Proposition 6.5, the composition:
\begin{align*}H_\bullet(ft_{<c-1-\bar{p}(c)}\partial T;\Q) \xrightarrow{f_*} H_\bullet(\partial T;\Q) \rightarrow IH^{\bar{p}}_\bullet(T;\Q)\end{align*}
is an isomorphism.
Therefore $(ft_{<c-1-\bar{p}(c)}\partial T,f)$ is a $\bar{p}$ topological intersection approximation for $T$ with coefficients in $\Q$. $\sslash$  
\end{ex}

\begin{prop}[Existence]\label{exist1}
A $\bar{p}$ algebraic intersection approximation $(A_\bullet, f_\bullet)$ for $T$ with coefficients in $k$ always exists.
\end{prop}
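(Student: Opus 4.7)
The plan is to build $A_\bullet$ as a chain complex with zero differential whose degree $i$ piece is an isomorphic copy of $Z_i^{\bar{p}}$, and then to construct $f_\bullet$ by cherry-picking cycles in $\partial T$ whose homology classes project onto a preferred splitting of $Z_i^{\bar{p}}$ inside $H_i(\partial T;k)$. Working with field coefficients is crucial: every short exact sequence of $k$-vector spaces splits, which is the source of all the flexibility needed.

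First, for each $i$, the defining surjection $H_i(\partial T;k) \twoheadrightarrow Z_i^{\bar{p}}$ (obtained by restricting the codomain of $H_i(\partial T;k) \to IH^{\bar{p}}_i(T;k)$ to its image) admits a $k$-linear section
\begin{align*}
\sigma_i : Z_i^{\bar{p}} \hookrightarrow H_i(\partial T;k).
\end{align*}
Fix such a $\sigma_i$ once and for all. Next, because $H_i(\partial T;k)$ is the quotient of the cycle subspace of $C_i(\partial T;k)$ by boundaries, we may choose a $k$-linear lift
\begin{align*}
\tilde{\sigma}_i : Z_i^{\bar{p}} \longrightarrow C_i(\partial T;k)
\end{align*}
of $\sigma_i$ whose image lies in $\ker \partial_i$; concretely, pick a basis of $Z_i^{\bar{p}}$, lift each $\sigma_i$-image to a representing cycle in $C_i(\partial T;k)$, and extend by linearity.

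Now define $A_\bullet$ to be the chain complex of $k$-vector spaces with $A_i := Z_i^{\bar{p}}$ and all differentials equal to zero, and declare
\begin{align*}
f_i := \tilde{\sigma}_i : A_i \longrightarrow C_i(\partial T;k).
\end{align*}
Since the image of $\tilde{\sigma}_i$ consists of cycles, $\partial \circ f_i = 0 = f_{i-1} \circ 0$, so $f_\bullet$ is a chain map. Because $A_\bullet$ has zero differential, $H_i(A_\bullet) = A_i = Z_i^{\bar{p}}$ canonically, and the composition
\begin{align*}
H_i(A_\bullet) \xrightarrow{f_*} H_i(\partial T;k) \longrightarrow Z_i^{\bar{p}}
\end{align*}
equals the composite of $\sigma_i$ with the canonical projection $H_i(\partial T;k) \twoheadrightarrow Z_i^{\bar{p}}$, which is the identity by construction. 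In particular it is an isomorphism, so $(A_\bullet, f_\bullet)$ is a $\bar{p}$ algebraic intersection approximation.

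There is no real obstacle here: the proposition reduces to the elementary observation that, over a field, surjections split and homology classes lift to cycles. The payoff of the statement is not difficulty but flexibility — later sections will need to compare or modify different such approximations, and the construction above makes it manifest that any choice of splittings $\sigma_\bullet$ and cycle representatives $\tilde{\sigma}_\bullet$ yields a valid algebraic intersection approximation.
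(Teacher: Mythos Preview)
Your proof is correct and is essentially the same as the paper's: both take $A_\bullet = Z^{\bar p}_\bullet$ with zero differential and define $f_\bullet$ via a $k$-linear section of the surjection $\ker\partial_\bullet \twoheadrightarrow Z^{\bar p}_\bullet$. The only cosmetic difference is that the paper chooses this section in one step, whereas you factor it as a splitting $\sigma_i$ of $H_i(\partial T;k)\twoheadrightarrow Z^{\bar p}_i$ followed by a lift to cycles.
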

\begin{proof}
Pick a section $s$ of the composition:
\begin{align*} ker\left[\partial_\bullet: C_\bullet(\partial T;k) \rightarrow C_{\bullet-1}(\partial T;k)\right] \rightarrow H_\bullet(\partial T;k) \rightarrow Z^{\bar{p}}_\bullet\end{align*}
where the first map is the quotient map from cycles to homology classes and the second is the obvious surjection.
The composition:
\begin{align*} Z^{\bar{p}}_\bullet \xrightarrow{s} ker\, \partial_\bullet \hookrightarrow C_\bullet(\partial T;k)\end{align*}
is an algebraic intersection approximation.
\end{proof}

We next examine the ``dual'' object to an intersection approximation.

\begin{lem}\label{cone}
Suppose $(A_\bullet, f_\bullet)$ is a $\bar{p}$ algebraic intersection approximation for $T$ with coefficients in $k$.
Then the composition (where the first map is the boundary map of Lemma \hyp{lefschetzdual}):
\begin{align*}Y^{\bar{p}}_{\bullet+1} \rightarrow H_\bullet(\partial T;k) \rightarrow H_\bullet(cf_\bullet)\end{align*}
is an isomorphism, where $c f_\bullet$ denotes the algebraic cone on $f_\bullet$.
\end{lem}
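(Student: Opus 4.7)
The plan is to combine the homology long exact sequence of the algebraic cone $cf_\bullet$ with the exact triangle of Lemma \hyp{lefschetzdual}, showing that both $H_\bullet(cf_\bullet)$ and $Y^{\bar{p}}_{\bullet+1}$ are naturally identified with the same complementary subspace inside $H_\bullet(\partial T;k)$, and that the composition in the statement realizes this identification.

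First, I would unpack the intersection approximation hypothesis. The composition $H_\bullet(A_\bullet) \xrightarrow{f_*} H_\bullet(\partial T;k) \twoheadrightarrow Z^{\bar{p}}_\bullet$ is an isomorphism, where the second arrow is the natural surjection of $H_\bullet(\partial T;k)$ onto its image $Z^{\bar{p}}_\bullet$ in $IH^{\bar{p}}_\bullet(T;k)$. Since an isomorphism is injective and the surjection factors through $f_*$, it follows that $f_*$ is injective and that $\mathrm{im}(f_*)$ maps isomorphically onto $Z^{\bar{p}}_\bullet$. Setting $K_i := \ker\bigl[H_i(\partial T;k)\to Z^{\bar{p}}_i\bigr]$, we obtain an internal direct sum decomposition $H_i(\partial T;k) = \mathrm{im}(f_*)\oplus K_i$ of $k$-vector spaces.

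Next, I would read off $H_\bullet(cf_\bullet)$ from the long exact sequence of the cone
\begin{align*}
\cdots \to H_i(A_\bullet) \xrightarrow{f_*} H_i(\partial T;k) \xrightarrow{\alpha_i} H_i(cf_\bullet) \to H_{i-1}(A_\bullet) \xrightarrow{f_*} \cdots
\end{align*}
Because $f_*$ is injective in every degree, the connecting map out of $H_i(cf_\bullet)$ vanishes and $\alpha_i$ is surjective with kernel $\mathrm{im}(f_*)$, so $\alpha_i$ induces an isomorphism $H_i(\partial T;k)/\mathrm{im}(f_*) \xrightarrow{\cong} H_i(cf_\bullet)$, and under the direct sum decomposition this canonically identifies $H_i(cf_\bullet)$ with $K_i$. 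Meanwhile, the long exact sequence of the triangle $H_\bullet(\partial T;k)\to Z^{\bar{p}}_\bullet\to Y^{\bar{p}}_\bullet\xrightarrow{-1}$ from Lemma \hyp{lefschetzdual} has surjective first map, so $Z^{\bar{p}}_i\to Y^{\bar{p}}_i$ is zero; hence the connecting map $\partial\colon Y^{\bar{p}}_{i+1}\to H_i(\partial T;k)$ is injective with image precisely $K_i$.

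Finally, the composition in question factors as $Y^{\bar{p}}_{i+1} \xrightarrow{\partial,\,\cong} K_i \hookrightarrow H_i(\partial T;k) \xrightarrow{\alpha_i} H_i(cf_\bullet)$. Under the identification $H_i(cf_\bullet)\cong K_i$ supplied by the direct sum $H_i(\partial T;k) = \mathrm{im}(f_*)\oplus K_i$, the restriction $\alpha_i|_{K_i}$ is the identity, so the full composition is an isomorphism. There is no serious obstacle: the entire argument is a careful bookkeeping of which injections and surjections arise from which triangle, and the only nontrivial input beyond linear algebra is using the defining property of $(A_\bullet,f_\bullet)$ to guarantee that $\mathrm{im}(f_*)$ and $K_\bullet$ sit as complementary subspaces of $H_\bullet(\partial T;k)$.
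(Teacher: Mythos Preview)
Your argument is correct, but it takes a different route from the paper's own proof. The paper applies the octahedral axiom to three exact triangles of zero-differential complexes: $H_\bullet(A_\bullet) \to H_\bullet(\partial T;k) \to H_\bullet(cf_\bullet) \xrightarrow{-1}$, $H_\bullet(\partial T;k) \to Z^{\bar{p}}_\bullet \to Y^{\bar{p}}_\bullet \xrightarrow{-1}$, and $H_\bullet(A_\bullet) \to Z^{\bar{p}}_\bullet \to 0 \xrightarrow{-1}$. The octahedral axiom then yields a fourth triangle $H_\bullet(cf_\bullet) \to 0 \to Y^{\bar{p}}_\bullet \xrightarrow{-1}$, whose boundary map is exactly the composition in the statement and is therefore forced to be an isomorphism. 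You instead identify both $H_\bullet(cf_\bullet)$ and $Y^{\bar{p}}_{\bullet+1}$ with the common complement $K_\bullet = \ker\bigl[H_\bullet(\partial T;k)\to Z^{\bar{p}}_\bullet\bigr]$ inside $H_\bullet(\partial T;k)$, using the internal splitting furnished by the intersection-approximation hypothesis. Your approach is more elementary and self-contained, requiring only long exact sequences and linear algebra over $k$; the paper's approach is cleaner from the triangulated-category viewpoint and fits into the broader formalism used elsewhere in the paper, but it does invoke heavier machinery for what is, as you demonstrate, a direct verification.
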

\begin{proof}
We will describe three exact triangles, then use the octahedral axiom for triangulated categories (see \cite{W}) to construct a fourth that will imply the theorem.

By definition of ``algebraic cone'', we have an exact triangle:
\begin{align*}A_\bullet \xrightarrow{f_*} C_\bullet(\partial T;k) \rightarrow cf_\bullet \xrightarrow{-1}.\end{align*}
A consequence of the definition of algebraic intersection approximation is that the map $f_*: H_\bullet(A_\bullet) \rightarrow H_\bullet(\partial T;k)$ is injective.
Applying this to the long exact sequence from the aforementioned exact triangle shows that the map $H_\bullet(\partial T;k) \rightarrow H_\bullet(cf_\bullet)$ is surjective
and the boundary map $H_\bullet(cf_\bullet) \rightarrow H_{\bullet-1}(A_\bullet)$ is the zero map.
We next translate this exact triangle to obtain another (with the same long exact sequence in homology):
\begin{align*}C_\bullet(\partial T;k) \rightarrow cf_\bullet \rightarrow A_{\bullet - 1} \xrightarrow{-1}.\end{align*} 
We apply Lemma \hyp{indexacttri} and our observations about the maps from this exact triangle to find an exact triangle:
\begin{align*}H_\bullet(\partial T;k) \rightarrow H_\bullet(cf_\bullet) \rightarrow H_{\bullet-1}(A_\bullet) \xrightarrow{-1}\end{align*}
and translate it back to obtain another exact triangle:
\begin{align*}H_\bullet(A_\bullet) \rightarrow H_\bullet(\partial T;k) \rightarrow H_\bullet(cf_\bullet) \xrightarrow{-1}.\end{align*}

By definition of an algebraic intersection approximation, there is an isomorphism of chain complexes (with zero differential) $H_\bullet(A_\bullet) \xrightarrow{\simeq} Z^{\bar{p}}_\bullet$.
Consequently there is an exact triangle:
\begin{align*}H_\bullet(A_\bullet) \rightarrow Z^{\bar{p}}_\bullet \rightarrow 0 \xrightarrow{-1}.\end{align*} 

Lastly, by Lemma \hyp{lefschetzdual} there is an exact triangle:
\begin{align*}H_\bullet(\partial T;k) \rightarrow Z^{\bar{p}}_\bullet \rightarrow Y^{\bar{p}}_\bullet \xrightarrow{-1}.\end{align*}

Altogether we have exact triangles:
\begin{align*}
&H_\bullet(A_\bullet) \rightarrow H_\bullet(\partial T;k) \rightarrow H_\bullet(cf_\bullet) \xrightarrow{-1}\\
&H_\bullet(\partial T;k) \rightarrow Z^{\bar{p}}_\bullet \rightarrow Y^{\bar{p}}_\bullet \xrightarrow{-1}\\
&H_\bullet(A_\bullet) \rightarrow Z^{\bar{p}}_\bullet \rightarrow 0 \xrightarrow{-1}.
\end{align*}
where the first map of the third exact triangle is the composition $H_\bullet(A_\bullet) \rightarrow H_\bullet(\partial T;k) \rightarrow Z^{\bar{p}}_\bullet$.
This is the setting in which the octahedral axiom is applicable.
The resulting exact triangle is:
\begin{align*}H_\bullet(cf_\bullet) \rightarrow 0 \rightarrow Y^{\bar{p}}_\bullet \xrightarrow{-1}\end{align*}
and the boundary map (which must be an isomorphism) is the composition:
\begin{align*}Y^{\bar{p}}_{\bullet+1} \rightarrow H_\bullet(\partial T;k) \rightarrow H_\bullet(cf_\bullet).\end{align*}
\end{proof}

\subsection{Local Duality Isomorphism}\label{LocalDualityIsomorphism}
Let $(\bar{p}, \bar{q})$ denote complementary perversities.
If our intersection spaces are to have a global duality, a local duality must first be understood.
Suppose $(A^{\bar{p}}_\bullet,f^{\bar{p}}_\bullet)$ and $(A^{\bar{q}}_\bullet,f^{\bar{q}}_\bullet)$ are $\bar{p}$ and $\bar{q}$ algebraic intersection approximations for $T$ with coefficients in $k$.
We consider the diagram:
$$\begin{tikzcd}
Z_{\bar{q}}^{n-r-1} \ar[r] & H^{n-r-1}(\partial T;k) \ar[r] & H^{n-r-1}(A^{\bar{q}}_\bullet)\\
Y^{\bar{p}}_{r+1} \ar[u, "D"] \ar[r] & H_r(\partial T; k) \ar[u,"D" ] \ar[r] & H_r(cf^{\bar{p}}_\bullet).
\end{tikzcd}$$ 
The upper composition is by definition an isomorphism and the lower composition is by Lemma \hyp{cone} also an isomorphism.
So there exists a unique \bi{local duality isomorphism}:
\begin{align*} D: H_r(cf^{\bar{p}}_\bullet) \rightarrow H^{n-r-1}(A^{\bar{q}}_\bullet)\end{align*}
that makes the \textit{outer} box commute.
This describes a \bi{local intersection pairing}:
\begin{align*}
(-,-) : H^{\bar{p}}_r(cf^{\bar{p}}_\bullet) \times H_{n-r-1}(A^{\bar{q}}_\bullet) \rightarrow k,~ (\alpha,\beta) = D(\alpha)(\beta). 
\end{align*} 

We say the \bi{$r$th local duality obstruction for $(A^{\bar{p}}_\bullet, f^{\bar{p}}_\bullet)$, $(A^{\bar{q}}_\bullet, f^{\bar{q}}_\bullet)$ vanishes} if and only if the \textit{entire} diagram:
$$\begin{tikzcd}
Z_{\bar{q}}^{n-r-1} \ar[r] & H^{n-r-1}(\partial T;k) \ar[r] & H^{n-r-1}(A^{\bar{q}}_\bullet)\\
Y^{\bar{p}}_{r+1} \ar[u, "D"] \ar[r] & H_r(\partial T; k) \ar[u,"D"] \ar[r] & \ar[u, "D"] H_r(cf^{\bar{p}}_\bullet)
\end{tikzcd}$$ 
commutes (this is not necessarily true, because the right box need not commute).
The following theorem captures the physical notion that the local duality obstructions will vanish if $im(f_*)$ contains no pairs of ``stably intersecting'' cycles.    

\begin{prop}\label{intpairing}
The $r$th local duality obstruction for $(A^{\bar{p}}_\bullet, f^{\bar{p}}_\bullet)$, $(A^{\bar{q}}_\bullet, f^{\bar{q}}_\bullet)$ vanishes if and only if given any:
\begin{align*}\alpha \in im\,f_*^{\bar{p}} \subset H_\bullet(\partial T;k), \beta \in im\,f_*^{\bar{q}} \subset H_\bullet(\partial T;k) \mbox{ with $|\alpha| = r,~ |\beta| = n-r-1$} \end{align*}
we have the following vanishing of the intersection pairing on $\partial T$:
\begin{align*}(\alpha,\beta) = 0.\end{align*}
This vanishing occurs for example if $\alpha$ and $\beta$ are representable by disjoint cycles. 
\end{prop}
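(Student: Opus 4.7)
The plan is to unpack exactly what commutativity of the right-hand square in the local-duality diagram amounts to. The left square commutes by Lemma~\hyp{lefschetzdual} and the outer rectangle commutes by the very definition of $D\colon H_r(cf^{\bar{p}}_\bullet) \to H^{n-r-1}(A^{\bar{q}}_\bullet)$, so the $r$th obstruction vanishes if and only if the identity $(f^{\bar{q}}_\bullet)^* \circ D_{\partial T} = D \circ \pi$ holds on all of $H_r(\partial T;k)$, where $\pi\colon H_r(\partial T;k) \twoheadrightarrow H_r(cf^{\bar{p}}_\bullet)$ is the natural map from the cone triangle and $D_{\partial T}$ denotes Poincar\'e duality on the closed oriented manifold $\partial T$.

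First I would reduce to testing this identity on $\ker\pi$. By Lemma~\hyp{cone}, the connecting map $Y^{\bar{p}}_{r+1} \to H_r(\partial T;k)$ followed by $\pi$ is an isomorphism, and the outer commutativity of the diagram already delivers the desired identity on the image of $Y^{\bar{p}}_{r+1}$. Any class of $H_r(\partial T;k)$ decomposes as such an image class plus a class in $\ker\pi$, so commutativity of the right square reduces to $(f^{\bar{q}}_\bullet)^*(D_{\partial T}(\alpha)) = 0$ for every $\alpha \in \ker\pi$. The long exact sequence of the triangle $A^{\bar{p}}_\bullet \to C_\bullet(\partial T;k) \to cf^{\bar{p}}_\bullet \xrightarrow{-1}$, combined with the injectivity of $f^{\bar{p}}_*$ observed in the proof of Lemma~\hyp{cone}, identifies $\ker\pi = \im f^{\bar{p}}_*$.

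Next I would translate the cohomological vanishing into the claimed intersection-pairing condition. Since $k$ is a field, universal coefficients identifies $H^{n-r-1}(A^{\bar{q}}_\bullet)$ with the $k$-dual of $H_{n-r-1}(A^{\bar{q}}_\bullet)$, so $(f^{\bar{q}}_\bullet)^*(D_{\partial T}(\alpha))$ vanishes iff $\langle D_{\partial T}(\alpha),\, f^{\bar{q}}_*\beta\rangle = 0$ for every $\beta \in H_{n-r-1}(A^{\bar{q}}_\bullet)$, where naturality of the Kronecker pairing has been used to move $(f^{\bar{q}}_\bullet)^*$ across to $f^{\bar{q}}_*$. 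This last quantity is by definition the intersection pairing $(\alpha,\, f^{\bar{q}}_*\beta)$ on $\partial T$, and since $\beta$ ranges over $H_{n-r-1}(A^{\bar{q}}_\bullet)$, the class $f^{\bar{q}}_*\beta$ sweeps out $\im f^{\bar{q}}_*$. This yields the stated biconditional. For the final clause, disjoint-support representatives produce vanishing intersection pairing on $\partial T$ by the standard geometric picture: one can represent the Poincar\'e duals by cochains supported in disjoint tubular neighborhoods of the given cycles, whose cup product is then identically zero.

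The main obstacle is purely bookkeeping: verifying that the three identifications in play --- the cone-boundary isomorphism of Lemma~\hyp{cone}, the equality $\ker\pi = \im f^{\bar{p}}_*$, and the translation between $\langle D_{\partial T}(\alpha), -\rangle$ and the intersection pairing on $\partial T$ --- fit together to reproduce exactly the bilinear form in the statement, without sign or degree-shift discrepancies.
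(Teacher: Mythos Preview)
Your argument is correct and follows essentially the same route as the paper's proof. The paper also reduces the right-square commutativity to the condition $f_{\bar{q}}^*D_{\partial T}(\alpha)=0$ for $\alpha\in\ker v=\im f^{\bar{p}}_*$, then translates this via the Kronecker pairing into the intersection-pairing statement; the only cosmetic difference is that the paper uses the explicit projection $\alpha\mapsto\alpha-u(vu)^{-1}v\alpha$ onto $\ker v$, whereas you invoke the direct-sum decomposition $H_r(\partial T;k)=\im u\oplus\ker\pi$ coming from the fact that $vu$ is an isomorphism. One small remark: your verification on $\im u$ uses both the left-square commutativity and the outer-rectangle commutativity, not just the latter, so you might tighten the phrasing ``the outer commutativity of the diagram already delivers the desired identity'' accordingly.
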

\begin{proof}
Fix $r \in \Z$ and consider the commutative diagram whose maps have been named:
$$\begin{tikzcd}
Z_{\bar{q}}^{n-r-1} \ar[r, "l"] & H^{n-r-1}(\partial T;k) \ar[r, "f_{\bar{q}}^*"] & H^{n-r-1}(A^{\bar{q}}_\bullet)\\
Y^{\bar{p}}_{r+1} \ar[u, "D"] \ar[r, "u"] & H_r(\partial T; k) \ar[u,"D"] \ar[r, "v"] & H_r(cf^{\bar{p}}_\bullet).
\end{tikzcd}$$ 
The $r$th local duality obstruction vanishes iff:
\begin{align*} f_{\bar{q}}^*D = (f_{\bar{q}}^*l)D(vu)^{-1}v\end{align*}
By commutativity we have equivalences:
\begin{align*}\left[f_{\bar{q}}^*D = (f_{\bar{q}}^*l)D(vu)^{-1}v\right] &\iff \left[f^*D = f_{\bar{q}}^*Du(vu)^{-1}v\right]\\ &\iff \left[\forall \alpha \in H_r(\partial T;k),~ f_{\bar{q}}^*D(\alpha - u(vu)^{-1}v\alpha) = 0\right].\end{align*}
Next observe that:
\begin{align*}
\{\alpha - u(vu)^{-1}v\alpha \mid \alpha \in H_r(\partial T;k)\} = ker\, v = im \left[f^{\bar{p}}_*: H_r(A^{\bar{p}}_\bullet) \rightarrow H_r(\partial T;k)\right];
\end{align*}
The second equality is a consequence of a long exact sequence.
The ``$\subset$'' part of the first equality can be directly verified.
For the ``$\supset$'' part of this equality, simply observe that if $\alpha \in ker\, v$, then:
\begin{align*}\alpha = \alpha - u(vu)^{-1}v\alpha.\end{align*}
Put together we have that the $r$th local duality obstruction vanishes iff:
\begin{align*}
D\left(im\left[f^{\bar{p}}_*: H_r(A^{\bar{p}}_\bullet) \rightarrow H_r(\partial T;k)\right] \right) \subset ker\, \left[f_{\bar{q}}^*: H^{n-r-1}(\partial T;k) \rightarrow H^{n-r-1}(A^{\bar{q}}_\bullet)\right].
\end{align*}
This holds iff for all $\alpha \in im\, f^{\bar{p}}_*$ with $|\alpha| = r$ and $\beta = f^{\bar{q}}_*(\gamma) \in im\, f^{\bar{q}}_*$ with $|\beta| = n-r-1$ we have:
$$0 = f_{\bar{q}}^*D(\alpha)(\gamma) = D(\alpha)(\beta) = (\alpha,\beta)$$
where we have used the fact that the duality isomorphism induces the intersection pairing on homology.
\end{proof}

\begin{ex}
Suppose $\Sigma = \{p\}$ consists of a single point with link $L$.
Then:
\begin{align*}
IH^{\bar{p}}_\bullet(T;k) = H_\bullet^{<n-1-\bar{p}(n)}(L;k), \ IH^{\bar{q}}_\bullet(T;k) = H_\bullet^{<n-1-\bar{q}(n)}(L;k)
\end{align*}
Therefore, given any algebraic intersection approximations $(A^{\bar{p}}_\bullet, f^{\bar{p}}_\bullet)$, $(A^{\bar{q}}_\bullet, f^{\bar{q}}_\bullet)$ for $T$ with coefficients in $k$, the subsets $im\,f_*^{\bar{p}}$ and $im\,f^{\bar{q}}_*$ of $H_\bullet(\partial T;k)$ contain classes of degree strictly less than $n-1-\bar{p}(n)$ and $n-1-\bar{q}(n)$ respectively.
By definition of complementary perversities we have:
\begin{align*}
(n-1-\bar{p}(n))-1+(n-1-\bar{q}(n))-1 =	n-2.
\end{align*}
Since $\partial T$ is an $(n-1)$-dimensional manifold, no two of these classes can pair to a nonzero field element.
So the local duality obstructions always vanish. $\sslash$
\end{ex}

\begin{ex}
In this example we show that our local duality obstructions all vanish if and only if those of Banagl-Chriestenson \cite{BaC} all vanish (when our local intersection approximations are fiberwise truncations).
We assume $\Q$-coefficients.
Suppose $X$ has a Whitney stratification consisting of exactly two strata $\{X-\Sigma, \Sigma\}$ where $\Sigma$ has codimension $c$.
Suppose also that $T$ is homeomorphic to the mapping cylinder of a fiber bundle projection $\partial T \rightarrow \Sigma$, and that there exist fiberwise truncations:
\begin{align*}&f^{\bar{p}}: ft_{<c-1-\bar{p}(c)}\partial T \rightarrow \partial T.\\
&f^{\bar{q}}: ft_{<c-1-\bar{q}(c)}\partial T \rightarrow \partial T.
\end{align*}
By Example \hyp{BC1} these maps are shown to be topological local intersection approximations and:
\begin{align*}
Z_\bullet^{\bar{p}} = IH_\bullet^{\bar{p}}(T),\ Z_\bullet^{\bar{q}} = IH_\bullet^{\bar{q}}(T)	
\end{align*}
from which we also conclude:
\begin{align*}
Y_\bullet^{\bar{p}} = IH_\bullet^{\bar{p}}(T,\partial T),\ Y_\bullet^{\bar{q}} = IH_\bullet^{\bar{q}}(T,\partial T)	.
\end{align*}
There are also isomorphisms:
\begin{align*}
H_\bullet(cf_\#^{\bar{p}}) \cong \widetilde{H}_\bullet(cf^{\bar{p}}),\ H_\bullet(cf_\#^{\bar{p}}) \cong \widetilde{H}_\bullet(cf^{\bar{p}})
\end{align*}
where $cf^{\bar{p}}$ and $cf^{\bar{q}}$ are topological mapping cones. 
Therefore by definition our $r$th local duality obstruction vanishes if and only if the entire diagram:
$$\begin{tikzcd}
IH_{\bar{q}}^{n-r-1}(T) \ar[r] & H^{n-r-1}(\partial T) \ar[r] & H^{n-r-1}(ft_{<c-1-\bar{q}(c)}\partial T)\\
IH^{\bar{p}}_{r+1}(T,\partial T) \ar[u, "D"] \ar[r] & H_r(\partial T) \ar[u,"D"] \ar[r] & \ar[u, "D"] \widetilde{H}_r(cf^{\bar{p}})
\end{tikzcd}$$
commutes.
Compare this to the diagram appearing in \cite[Proposition 6.10]{BaC}: up to labelling and the direction of duality isomorphisms, it is the same, since their $Q_{\geq c-1-\bar{p}(c)}$ is homotopy equivalent to our $cf^{\bar{p}}$ by \cite[Equation (6.4)]{BaC}.
Banagl-Chriestenson prove that their local duality obstructions vanish if and only if the above diagram commutes for all $r$.
In other words: the local duality obstructions of Banagl-Chriestenson all vanish if and only if our local duality obstructions all vanish (where both are associated to a fixed complementary pair of fiberwise truncations).    
$\sslash$	
\end{ex}

\textit{Remark.} If $X$ is a Witt space (see \cite{Fr} Definition 9.1.2 and Proposition 9.1.8) then so is $T$. 
In this case, by definition, the approximations for $T$ for the lower $\bar{m}$ and upper $\bar{n}$ middle perversities would be indistinguishable.
So for $X$ Witt, an $\bar{m}$ approximation $(A_\bullet, f_\bullet)$ is an $\bar{n}$ approximation - we call this a \bi{Witt approximation for $T$ with coefficients in $k$} - and we can talk about vanishing of duality obstructions for $(A_\bullet, f_\bullet)$ alone.
For simplicity, we state the following theorem for Witt approximations.

\begin{thm}[Existence]\label{exist2}
Suppose $X$ is a Witt space of even dimension $n = 2m$.
Then there exists a Witt algebraic intersection approximation $(A_\bullet,f_\bullet)$ for $T$ with coefficients in $k$ for which all the local duality obstructions vanish.
\end{thm}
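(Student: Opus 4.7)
The plan is to build $(A_\bullet, f_\bullet)$ whose homological image $V_\bullet := \im f_* \subset H_\bullet(\partial T; k)$ accomplishes two things: it maps isomorphically onto $Z^{\bar{m}}_\bullet$ (so that $(A_\bullet, f_\bullet)$ is a Witt approximation), and it is \emph{self-orthogonal in complementary degrees}, meaning $V_r$ pairs trivially with $V_{n-1-r}$ under the intersection pairing on the closed oriented $(n-1)$-manifold $\partial T$. By Proposition \hyp{intpairing} applied with $\bar p = \bar q = \bar m$, the latter orthogonality is precisely what forces every local duality obstruction to vanish.

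The key algebraic input is a duality for the kernels. Write $K_r := \ker\left[H_r(\partial T; k) \to IH^{\bar{m}}_r(T; k)\right]$, so that $Z^{\bar{m}}_r = H_r(\partial T;k)/K_r$. I claim $K_r$ and $K_{n-1-r}$ are mutual annihilators under the perfect intersection pairing on $\partial T$. By the commutative diagram of Lemma \hyp{tubneighdual}, Poincar\'{e} duality $D$ identifies the map $H_r(\partial T;k) \to IH^{\bar{m}}_r(T;k)$ with the boundary map $H^{n-1-r}(\partial T;k) \to IH_{\bar{m}}^{n-r}(T, \partial T;k)$. The cohomology LES of the pair $(T,\partial T)$ identifies the kernel of the latter with $\im\left[IH_{\bar{m}}^{n-1-r}(T;k) \to H^{n-1-r}(\partial T;k)\right]$, which by Lemma \hyp{identifications} is $Z_{\bar{m}}^{n-1-r}$. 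On the other hand, dualizing the surjection $H_{n-1-r}(\partial T;k) \twoheadrightarrow Z^{\bar{m}}_{n-1-r}$ identifies the annihilator of $K_{n-1-r}$ in $H^{n-1-r}(\partial T;k)$ with $(Z^{\bar{m}}_{n-1-r})^* = Z_{\bar{m}}^{n-1-r}$. Hence $D(K_r) = \mathrm{Ann}(K_{n-1-r})$, proving the claim.

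Now for each integer $r$ with $2r < n-1$, pick an arbitrary linear complement $V_r$ of $K_r$ in $H_r(\partial T;k)$, and then define $V_{n-1-r} \subset H_{n-1-r}(\partial T;k)$ to be the annihilator of $V_r$ under the intersection pairing. Using the mutual-annihilator property of Step 2: $V_{n-1-r} \cap K_{n-1-r} = \mathrm{Ann}(V_r) \cap \mathrm{Ann}(K_r) = \mathrm{Ann}(V_r + K_r) = \mathrm{Ann}(H_r) = 0$, while a dimension count ($\dim V_{n-1-r} = \dim H_{n-1-r} - \dim V_r = \dim K_r = \dim H_{n-1-r} - \dim K_{n-1-r}$) yields $H_{n-1-r} = V_{n-1-r} \oplus K_{n-1-r}$. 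The parity hypothesis $n = 2m$ rules out the self-paired middle degree $r = (n-1)/2$, so this prescription assigns a complement $V_r$ to $K_r$ in every degree, with $V_r \perp V_{n-1-r}$ for all $r$ by construction.

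Finally, take $A_\bullet := V_\bullet$ with zero differential, and let $\tilde s: V_\bullet \hookrightarrow \ker \partial_\bullet \subset C_\bullet(\partial T;k)$ be any graded section of the cycle-to-homology projection restricted to $V_\bullet$; set $f_\bullet := \tilde s$, which is automatically a chain map since $V_\bullet$ has zero differential and $\tilde s$ lands in cycles. The composition $H_\bullet(A_\bullet) = V_\bullet \hookrightarrow H_\bullet(\partial T;k) \twoheadrightarrow Z^{\bar{m}}_\bullet$ is an isomorphism because $V_\bullet$ is a complement to $K_\bullet$, so $(A_\bullet, f_\bullet)$ is a Witt algebraic intersection approximation; its image $\im f_* = V_\bullet$ satisfies the orthogonality needed by Proposition \hyp{intpairing}, yielding vanishing of all local duality obstructions. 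The principal technical obstacle is the mutual-annihilator identification in the second paragraph, which requires carefully tracking several layers of duality; the remaining steps are linear-algebraic, and the parity $n = 2m$ plays its sole role by excluding a self-paired middle dimension where one would instead be forced to find a Lagrangian inside a nondegenerate symmetric form on $V_r/K_r$.
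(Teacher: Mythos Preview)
Your proof is correct and follows essentially the same route as the paper's own argument: both construct $A_\bullet$ as a zero-differential subcomplex of $H_\bullet(\partial T;k)$, choose an arbitrary complement to the kernel in degrees below $m$, and then \emph{define} the piece in the complementary degree $s = n-1-r$ to be the annihilator of $A_r$ under the intersection pairing on $\partial T$. Your ``mutual annihilator'' lemma $D(K_r) = \mathrm{Ann}(K_{n-1-r})$ is precisely the content of the paper's identification $D(Y_{r+1}) = Z^s$ from Lemma~\ref{lefschetzdual} (since $Y_{r+1} = K_r$ and $Z^s = \mathrm{Ann}(K_s)$ inside $H^s(\partial T;k)$), and your linear-algebraic verification that $V_{n-1-r}$ complements $K_{n-1-r}$ is the same argument the paper gives by contradiction. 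The only difference is packaging: you isolate the annihilator duality as a clean preliminary lemma and then do the rest as pure linear algebra, whereas the paper weaves that duality into the verification step; both invoke Proposition~\ref{intpairing} to conclude.
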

\begin{proof}
We suppress perversity superscripts and subscripts since they yield isomorphic objects below.
Pick a Witt algebraic intersection approximation $(A_\bullet, f_\bullet)$ with \textit{zero differential} as in Proposition \hyp{exist1}. 
This allows us to assume $A_\bullet \subset H_\bullet(\partial T;k)$; since $A_\bullet = H_\bullet(A_\bullet)$ and the map $H_\bullet(A_\bullet) \rightarrow H_\bullet(\partial T;k)$ is injective by definition.

We will replace all the $A_r$ for $r \geq m$ and leave unchanged all the $A_r$ for $r < m$.
We do this as follows.
\textit{Fix} $r < m$ and set $s = 2m-1-r \geq m$.
\textit{Replace} our given $A_s$ with the subspace of $H_s(\partial T;k)$ on which $D(A_r) \subset H^s(\partial T;k)$ vanishes.
We pick a map:
\begin{align*} f_s: A_s \rightarrow C_s(\partial T;k) \end{align*}
by selecting a section of the quotient map \{$s$-cycles of $\partial T$\} $\to$ $H_s(\partial T;k)$, and then using the composition:
$$
\begin{tikzcd}[column sep = small] A_s \ar[r,hook] & H_s(\partial T;k) \ar[r, dashed, bend left = 10] & \ar[l, bend left = 10] \{\mbox{s-cycles}\} \ar[r,hook] & C_s(\partial T;k).\end{tikzcd}
$$

We must now check two things.
First, that the composition:
\begin{align*} A_s \hookrightarrow H_s(\partial T;k) \rightarrow Z_s\end{align*}
is an isomorphism.
Second, that the $r$th local duality obstruction vanishes (observe that Proposition \hyp{intpairing} implies we only have to check vanishing for $r < n$, because this leads to vanishing for all $r$).
Because the differentials of our old and new $A_\bullet$ are zero, there is nothing else to worry about.
 
By Lemma \hyp{lefschetzdual} we have a commutative diagram of short exact sequences (where we have arbitrarily assigned names to some maps and considered injections as inclusions):
$$
\begin{tikzcd}
0 \ar[r] & Z^s \ar[r, hook] & H^s(\partial T;k) \ar[r, "u"] & Y^{s+1} \ar[r] & 0\\
0 \ar[r] & Y_{r+1} \ar[u,"D", "\simeq" {anchor=south, rotate=-90, inner sep=1mm}] \ar[r, hook] & H_r(\partial T;k) \ar[u, "D", "\simeq" {anchor=south, rotate=-90, inner sep=1mm}] \ar[r, "v"] & Z_r \ar[u, "D", "\simeq" {anchor=south, rotate=-90, inner sep=1mm}] \ar[r] & 0. 
\end{tikzcd}
$$ 
By definition and from the diagram we have $dim\, A_r = dim\, Z_r = \dim\, Y^{s+1}$.
By construction, the codimension of $A_s$ in $H^s(\partial T;k)$ is equal to the dimension of $A_r$.
Combining this with the diagram of short exact sequences we have:
\begin{align*}
 dim\, A_s = dim\, H^s(\partial T;k) - dim\, Z_r &= dim\, H^s(\partial T;k) - dim\, Y^{s+1}\\
&= dim\, Z^s = dim\, Z_s.
\end{align*}  
This is a start, for we have shown that the dimensions of $A_s$ and $Z_s$ coincide. 
We now need only show that $A_s$ does has trivial intersection with the kernel of $H_s(\partial T;k) \rightarrow Z_s$ to conclude that $A_s$ is suitable for an algebraic intersection approximation.
Suppose towards a contradiction that it has \textit{nontrivial} intersection with this kernel.
From the short exact sequence:
\begin{align*} 0 \rightarrow Y_{s+1} \hookrightarrow H_s(\partial T;k) \rightarrow Z_s \rightarrow 0 \end{align*}
if follows that $A_s \cap Y_{s+1}$ is nontrivial.
Pick a function $g \in Y^{s+1}$ which does not vanish on $A_s \cap Y_{s+1}$.
Our diagram identifies $Y^{s+1}$ with $Z_r$, and $A_r \subset H_r(\partial T;k)$ maps isomorphically onto $Z_r$ under $v$, so there exists $\alpha \in A_r$ with:
\begin{align*} g = Dv(\alpha) = uD(\alpha) \end{align*} 
The map $u$ is none other than the restriction to $Y_{s+1}$.
So the fact that $g = uD(\alpha)$ does not vanish on $A_s \cap Y_{s+1}$ implies $D(\alpha)$ does not vanish on $A_s$.
This contradicts the definition of $A_s$.
Hence, in fact $A_s \cap Y_{s+1} = \langle 0 \rangle$ and the composition:
$$A_s \hookrightarrow H_s(\partial T;k) \rightarrow Z_s$$
is an isomorphism.

Next we verify vanishing of the $r$th local duality obstruction.
We know that $D(A_r)$ vanishes on $A_s$.
Therefore for all $\alpha \in A_r$ and $\beta \in A_s$ we have;
$$D(\alpha)(\beta) = (\alpha,\beta) = 0.$$
Now apply Proposition \hyp{intpairing}.
\end{proof}

\section{Global Construction}\label{GlobalConstruction}

\subsection{Denotations and Assumptions}\label{DenAssGlobal} 
Throughout Section \hyp{GlobalConstruction} we let $k$ denote a field and $X$ a compact subvariety of a real analytic manifold with singular set $\Sigma$.
Assume $X$ admits pc tubular data and is oriented of dimension $n$ (e.g.\ $X$ is complex and equidimensional). 
Let $T$ denote a pc tubular neighborhood of $\Sigma$.
Let $(\bar{p},\bar{q})$ denote complementary perversity functions.
Let $(A_\bullet^{\bar{p}}, f_\bullet^{\bar{p}})$ and $(A_\bullet^{\bar{q}}, f_\bullet^{\bar{q}})$, respectively $(A^{\bar{p}}, f^{\bar{p}})$ and $(A^{\bar{q}}, f^{\bar{q}})$, denote algebraic, respectively topological, intersection approximations for $T$ with coefficients in $k$.

\subsection{Intersection Space} We are now in a position to define a global space extending earlier definitions of intersection space.
The \bi{algebraic intersection space $I_{f^{\bar{p}}_\bullet}X$ associated to $(A^{\bar{p}}_\bullet, f^{\bar{p}}_\bullet)$} is the algebraic cone on the composition:
\begin{align*}
A^{\bar{p}}_\bullet \xrightarrow{f^{\bar{p}}_\bullet} C_\bullet(\partial T;k) \xrightarrow{incl_\bullet} C_\bullet(X-T^\circ;k).
\end{align*}
The \bi{topological intersection space $I_{f^{\bar{p}}}X$ associated to $(A^{\bar{p}},f^{\bar{p}})$} is the topological cone on the composition:
\begin{align*}
A^{\bar{p}} \xrightarrow{f^{\bar{p}}} \partial T \xrightarrow{incl} X - T^\circ.
\end{align*} 
We achieve a global duality assuming the local duality obstructions vanish.

\begin{thm}\label{algdual}
Assume the local duality obstructions vanish for $(A^{\bar{p}}_\bullet, f^{\bar{p}}_\bullet)$, $(A^{\bar{q}}_\bullet, f^{\bar{q}}_\bullet)$.
Then there exist non-canonical duality isomorphisms:
\begin{align*}
D: H_r(I_{f^{\bar{p}}_\bullet}X) \xrightarrow{\simeq} H^{n-r}(I_{f^{\bar{q}}_\bullet}X).
\end{align*}
\end{thm}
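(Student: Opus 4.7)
The plan is to apply Lemma \hyp{dualconstruction} to a comparison of two long exact sequences, one on each side. Set $M = X - T^\circ$; since $\Sigma \subset T^\circ$, the complement $M$ is an oriented $n$-dimensional manifold with boundary $\partial T$. On the $\bar{p}$ side, I factor $A^{\bar{p}}_\bullet \to C_\bullet(M;k)$ as $A^{\bar{p}}_\bullet \xrightarrow{f^{\bar{p}}_\bullet} C_\bullet(\partial T;k) \xrightarrow{\text{incl}_\bullet} C_\bullet(M;k)$ and invoke the octahedral axiom to obtain an exact triangle
\begin{align*}
cf^{\bar{p}}_\bullet \to I_{f^{\bar{p}}_\bullet}X \to C_\bullet(M, \partial T; k) \xrightarrow{-1}
\end{align*}
whose associated homology long exact sequence flanks $H_r(I_{f^{\bar{p}}_\bullet}X)$ by $H_r(cf^{\bar{p}}_\bullet)$ and $H_r(M, \partial T;k)$. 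On the $\bar{q}$ side, I dualize the defining triangle $A^{\bar{q}}_\bullet \to C_\bullet(M;k) \to I_{f^{\bar{q}}_\bullet}X \xrightarrow{-1}$ over $k$; the resulting cohomology long exact sequence flanks $H^{n-r}(I_{f^{\bar{q}}_\bullet}X)$ by $H^{n-r-1}(A^{\bar{q}}_\bullet)$ and $H^{n-r}(M;k)$.

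To assemble these into the format required by Lemma \hyp{dualconstruction}, I would identify the flanking terms via two sorts of dualities. Lefschetz--Poincar\'e duality on the oriented manifold with boundary $(M, \partial T)$ supplies natural isomorphisms $H_r(M, \partial T;k) \cong H^{n-r}(M;k)$. The local duality isomorphisms $D: H_r(cf^{\bar{p}}_\bullet) \xrightarrow{\simeq} H^{n-r-1}(A^{\bar{q}}_\bullet)$ from Section \hyp{LocalDualityIsomorphism} provide the other flanking identifications; these isomorphisms always exist, but their compatibility with further maps in the long exact sequences is exactly what the local duality obstructions measure. Granted the requisite commutativity, Lemma \hyp{dualconstruction} then produces the sought isomorphism
\begin{align*}
D: H_r(I_{f^{\bar{p}}_\bullet}X) \xrightarrow{\simeq} H^{n-r}(I_{f^{\bar{q}}_\bullet}X),
\end{align*}
non-canonical by virtue of its dependence on a choice of sections as in that lemma.

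The main obstacle is verifying that the two flanking squares of the comparison diagram commute. A chain-level inspection shows that each connecting map $H_{r+1}(M, \partial T;k) \to H_r(cf^{\bar{p}}_\bullet)$ in the octahedral long exact sequence factors as the pair connecting map $H_{r+1}(M, \partial T;k) \to H_r(\partial T;k)$ followed by the map $H_r(\partial T;k) \to H_r(cf^{\bar{p}}_\bullet)$ coming from the triangle defining $cf^{\bar{p}}_\bullet$; dually, the map $H^{n-r}(M;k) \to H^{n-r}(A^{\bar{q}}_\bullet)$ factors through $H^{n-r}(\partial T;k)$ via restriction followed by $(f^{\bar{q}}_\bullet)^*$. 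Each flanking square thus splits into two sub-squares: the first compares Lefschetz--Poincar\'e duality on $(M, \partial T)$ with Poincar\'e duality on the closed manifold $\partial T$, and commutes by the naturality of Poincar\'e--Lefschetz duality under the inclusion $\partial T \hookrightarrow M$; the second is precisely the right half of the diagram from Section \hyp{LocalDualityIsomorphism}, whose commutativity is the very statement that the relevant local duality obstruction vanishes. The theorem's hypothesis therefore delivers the missing commutativity, Lemma \hyp{dualconstruction} applies, and the isomorphism $D$ is produced.
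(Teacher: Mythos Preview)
Your proposal is correct and follows essentially the same route as the paper's proof: both apply the octahedral axiom to obtain the triangle $cf^{\bar{p}}_\bullet \to I_{f^{\bar{p}}_\bullet}X \to C_\bullet(X-T^\circ,\partial T;k)\xrightarrow{-1}$, set up the comparison of long exact sequences with the dualized $\bar{q}$ triangle, factor the connecting-map square through $H_\bullet(\partial T;k)$ (the paper's relation (1)), and then invoke naturality of Lefschetz duality together with the vanishing of the local duality obstruction to verify the hypotheses of Lemma~\hyp{dualconstruction}. The only cosmetic difference is that the paper explicitly notes the ``bigger rectangle'' commutes by exactness (both compositions are zero), which you leave implicit.
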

\begin{proof}
We temporarily omit the perversity superscripts and subscripts, as the following statements about distinguished triangles hold for both.
We have a set of three distinguished triangles:
\begin{align*}
&A_\bullet \xrightarrow{f_\bullet} C_\bullet(\partial T; k) \rightarrow cf_\bullet \xrightarrow{-1}\\
&C_\bullet(\partial T; k) \xrightarrow{incl_\bullet} C_\bullet(X - T^\circ; k) \rightarrow C_\bullet(X-T^\circ,\partial T;k) \xrightarrow{-1}\\
&A_\bullet \xrightarrow{incl_\bullet \circ f_\bullet} C_\bullet(X-T^\circ;k) \rightarrow I_{f_\bullet}X \xrightarrow{-1}.
\end{align*}
The octahedral axiom implies the existence of a third distinguished triangle:
$$cf_\bullet \rightarrow I_{f_\bullet}X \rightarrow C_\bullet(X-T^\circ,\partial T;k) \xrightarrow{-1}.$$
The octahedral axiom moreover relates the maps in these four distinguished triangles; namely we have the following (every map below is a map from one of these distinguished triangles, and a shift by ``$-1$'' in a subscript indicates we are considering a boundary map): 
\begin{align}
&\left[C_{\bullet}(X-T^\circ, \partial T;k) \rightarrow cf_{\bullet-1}\right] = \left[C_{\bullet}(X-T^\circ,\partial T;k) \rightarrow C_{\bullet-1}(\partial T;k) \rightarrow cf_{\bullet-1}\right]\\
&\left[cf_{\bullet} \rightarrow A_{\bullet-1}\right] = \left[cf_{\bullet} \rightarrow I_{f_{\bullet}}X  \rightarrow A_{\bullet-1}\right]\\
&\left[C_\bullet(X-T^\circ;k) \rightarrow C_\bullet(X-T^\circ, \partial T;k)\right] = \left[C_\bullet(X-T^\circ;k) \rightarrow I_{f_\bullet}X \rightarrow C_\bullet(X-T^\circ, \partial T;k)\right]\\
&\left[C_\bullet(\partial T;k) \rightarrow cf_\bullet \rightarrow I_{f_\bullet} X\right] = \left[C_\bullet(\partial T;k) \rightarrow C_\bullet(X-T^\circ;k) \rightarrow I_{f_\bullet}X\right]\\
&\left[I_{f_\bullet}X \rightarrow C_\bullet(X-T^\circ, \partial T;k) \rightarrow C_{\bullet-1}(\partial T;k)\right] = \left[I_{f_\bullet}X \rightarrow A_{\bullet-1} \rightarrow C_{\bullet-1}(\partial T;k)\right]
\end{align}
We will only use the first of these in this proof, but the rest will be important later.

We now reintroduce perversity subscripts and superscripts.
Consider two long exact sequences obtained from the aforementioned distinguished triangles:
$$\begin{tikzcd}[column sep = small]
\cdots \ar[r] & H^{n-r-1}(A^{\bar{q}}_\bullet) \ar[r] & H^{n-r}(I_{f^{\bar{q}}_\bullet}X) \ar[r] & H^{n-r}(X-T^\circ;k) \ar[r] & H^{n-r}(A^{\bar{q}}_\bullet) \ar[r] & \cdots\\
\cdots \ar[r] & H_{r}(cf^{\bar{p}}_\bullet) \ar[u, "D"] \ar[r] & H_{r}(I_{f^{\bar{p}}_\bullet}X) \ar[r] & H_{r}(X-T^\circ, \partial T; k) \ar[u, "D"] \ar[r] & H_{r-1}(cf^{\bar{p}}_\bullet) \ar[u, "D"] \ar[r] & \cdots. 
\end{tikzcd}$$
If we can prove that this diagram is commutative, then we can use Lemma \hyp{dualconstruction} to construct (non-canonical) duality isomorphisms.
The left (bigger) rectangle above commutes by exactness.
We next use vanishing of duality obstructions to show that the right square also commutes.
By our observation (1) about the boundary map of the lower long exact sequence, the square of interest can be decomposed:
$$\begin{tikzcd}
H^{n-r}(X - T^\circ;k) \ar[r] & H^{n-r}(\partial T;k) \ar[r] & H^{n-r}(A^{\bar{q}}_\bullet)\\
H_{r}(X-T^\circ,\partial T;k) \ar[u,"D"] \ar[r] & H_{r-1}(\partial T;k) \ar[u,"D"] \ar[r] & H_{r-1}(cf^{\bar{p}}_\bullet) \ar[u,"D"] 
\end{tikzcd}$$
The leftmost box in this decomposed diagram always commutes, and the rightmost box commutes owing to the vanishing of the $(r-1)$th local duality obstruction.

We have successfully verified the hypotheses of Lemma \hyp{dualconstruction}.
\end{proof}

There is an analogous statement for the topological intersection space.

\begin{cor}
Assume the local duality obstructions for $(A^{\bar{p}},f^{\bar{p}})$, $(A^{\bar{q}},f^{\bar{q}})$  vanish.
Then there exist non-canonical duality isomorphisms:
\begin{align*} D: \tilde{H}_r(I_{f^{\bar{p}}}X; k) \xrightarrow{\simeq} \tilde{H}^{n-r}(I_{f^{\bar{q}}}X; k). \end{align*} 
\end{cor}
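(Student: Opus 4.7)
The plan is to reduce the corollary to Theorem \hyp{algdual} by passing from the topological intersection space to its singular chain complex. By definition, a $\bar{p}$ topological intersection approximation $(A^{\bar{p}}, f^{\bar{p}})$ gives rise to a $\bar{p}$ algebraic intersection approximation $(C_\bullet(A^{\bar{p}}; k), f^{\bar{p}}_\#)$, and similarly for $\bar{q}$. Since the local duality obstructions are an invariant of the underlying algebraic intersection approximation, the vanishing hypothesis transfers to the algebraic setting, so Theorem \hyp{algdual} applies to these particular chain-level approximations.

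Next, I would identify the reduced singular chain complex of the topological intersection space with the algebraic intersection space. The topological cone on $A^{\bar{p}} \xrightarrow{f^{\bar{p}}} \partial T \hookrightarrow X - T^\circ$ has the property that its reduced singular chain complex is naturally quasi-isomorphic to the algebraic mapping cone on the induced chain map $C_\bullet(A^{\bar{p}};k) \to C_\bullet(X - T^\circ; k)$. This is a standard fact: the reduced chains of a topological mapping cone are chain-homotopy equivalent to the algebraic cone of the induced map (one may use the long exact sequence of the pair $(I_{f^{\bar{p}}}X, \operatorname{cone}\,\mathrm{pt})$, or equivalently the excision identification of the mapping cone with the mapping cylinder modulo its base). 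Thus $\tilde{H}_*(I_{f^{\bar{p}}}X; k) \cong H_*(I_{f^{\bar{p}}_\#}X)$, and dually $\tilde{H}^*(I_{f^{\bar{q}}}X;k) \cong H^*(I_{f^{\bar{q}}_\#}X)$.

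Combining these two steps, the duality isomorphism $H_r(I_{f^{\bar{p}}_\#}X) \xrightarrow{\simeq} H^{n-r}(I_{f^{\bar{q}}_\#}X)$ produced by Theorem \hyp{algdual} transfers, via the quasi-isomorphisms above, to a duality isomorphism
\begin{align*}
D: \tilde{H}_r(I_{f^{\bar{p}}}X; k) \xrightarrow{\simeq} \tilde{H}^{n-r}(I_{f^{\bar{q}}}X; k).
\end{align*}
The non-canonicality is inherited from that of Theorem \hyp{algdual}, which is itself a consequence of the section choices required by Lemma \hyp{dualconstruction}. The main (and only) obstacle is verifying carefully that passing from the topological cone to the algebraic cone of chain complexes really does induce an isomorphism on reduced (co)homology respecting the long exact sequence used in the proof of Theorem \hyp{algdual}; once this naturality statement is in hand, the corollary is immediate.
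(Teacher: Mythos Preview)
Your proposal is correct and follows essentially the same approach as the paper: the paper simply observes that for a topological mapping cone there is an exact triangle
\[
C_\bullet(A;k) \xrightarrow{\,incl_\bullet \circ f_\#\,} C_\bullet(X-T^\circ;k) \rightarrow \tilde{C}_\bullet(I_fX) \xrightarrow{-1},
\]
so that the argument of Theorem~\ref{algdual} applies verbatim with $\tilde{C}_\bullet(I_fX)$ in place of the algebraic cone. Your more detailed route via the quasi-isomorphism $\tilde{C}_\bullet(I_fX)\simeq I_{f_\#}X$ amounts to the same thing, and your remark about checking compatibility with the long exact sequence is exactly the content of the exact triangle assertion.
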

\begin{proof}
This is a consequence of the arguments from Theorem \hyp{algdual}, since there for either perversity (omit the superscripts) there is an exact triangle:
\begin{align*}
C_\bullet(A;k) \xrightarrow{incl_\bullet \circ f_{\#}} C_\bullet(X - T^\circ;k) \rightarrow \tilde{C}_\bullet(I_fX) \xrightarrow{-1}.
\end{align*}
associated to a topological mapping cone.
\end{proof}

\textit{Remark.} While the cochain complex $C^\bullet(I_fX)$ of a \textit{topological intersection space} is naturally a differential graded $k$-algebra under cup product, the dual complex $(I_{f_\bullet}X)^*$ of an \textit{algebraic} intersection space does not seem to have a natural multiplicative structure.
Therefore finding topological, as opposed to just algebraic,  intersection spaces will prove to be an interesting task.    

\begin{ex}
We show that, when the local intersection approximation is a fiberwise truncation, our topological intersection space coincides with the Banagl-Chriestenson intersection space.
With assumptions as in Example \hyp{BC1} we have a fiberwise truncation $f: ft_{<c-1-\bar{p}(c)} \partial T \rightarrow \partial T$.
As in Example \hyp{BC1}, this fiberwise truncation constitutes a topological intersection approximation for $T$.
The associated topological intersection space is the cone on the composition:
\begin{align*}
ft_{<c-1-\bar{p}(c)} \partial T \rightarrow \partial T \rightarrow X - T^\circ.	
\end{align*}
This is precisely \cite[Definition 9.2]{BaC}, the definition of the Banagl-Chriestenson intersection space. $\sslash$
\end{ex}

\textit{Remark.} There is not an obvious general sheaf interpretation of algebraic intersection space cohomology.
This is because the local intersection approximation takes as input the not entirely local map $C_\bullet(\partial T; k) \rightarrow IH_\bullet(T;k)$. 
This in contrast to the \textit{AF intersection space pairs} of \cite{AF}, but we will show in the following subsection that the AF intersection space is in general distinct from our algebraic intersection space: in an example, we will show that homologies of the two do not even coincide.

\section{A Worked out Example}\label{Example}
\subsection{Denotations and Assumptions} 
In this section, we will deal only with spaces with even-dimensional strata, so without further comment we use middle-perversity intersection homology. 
Let $X \subset \C P^2$ denote an irreducible degree three nodal hypersurface with exactly one singular point $p$.
Let $B$ denote a closed tubular neighborhood of $p$ in $X$ whose boundary is denoted by $L$.
Let $M$ denote $X-B^\circ$. Observe that:
\begin{enumerate}[-]
\item $X$ is topologically a pinched torus.
\item $B \cong cL$.
\item $L \cong S^1 \sqcup S^1$.
\item $M \cong S^1 \times D^1$.
\end{enumerate}
Let $\overline{\calX} \subset \C P^3$ denote the projective cone on $X$.
Let $\infty$ denote $(0:0:0:1) \in \C P^3$.
The vector bundle $\C \hookrightarrow \mathbb{C}P^3-\{\infty\} \rightarrow \C P^2$ restricts to a vector bundle $\C \hookrightarrow \overline{\calX}-\{\infty\} \rightarrow X$ which we denote by $(\calX,\pi)$.
We also let:
\begin{enumerate}[-]
\item $\rho$ denote the restriction of vector bundle $\calX$ over $p$.
\item $\calB$ denote the restriction of vector bundle $\calX$ over $B$.
\item $\calL$ denote the restriction of vector bundle $\calX$ over $L$.
\item $\calM$ denote the restriction of vector bundle $\calX$ over $M$.
\item $\overline{\rho}$ denote the line in $\C P^3$ connecting $p$ with $\infty$, i.e.\ the closure of $\rho$ in $\overline{\calX}$, also the singular set of $\overline{\calX}$. 
\item $S(-)$ denote the sphere bundle associated to a vector bundle $(-)$.
\item $D(-)$ denote the disk bundle associated to a vector bundle $(-)$.
\end{enumerate}

\subsection{Setting Up the Example}
We will explicitly construct a topological intersection space for the projective cone $\overline{\calX}$ on $X \subset \C P^2$.
Moreover we will show that the corresponding local duality obstructions vanish.     
This example is of interest, because it is depth two with pseudomanifold stratification $\overline{\calX} \supset \overline{\rho} \supset \{\infty\}$, so the topological methods of \cite{BaC} do not apply.
We will also use this example to distinguish our construction from the construction of \cite{AF}.  

To rigorously carry out this construction we need to analyze $\overline{\calX}$ in detail. 
Topologically it is the Thom space of the vector bundle $(\calX, \pi)$ as described for example in \cite[Page 18]{Bo}, therefore is the homotopy pushout of the following diagram involving disk and circle bundles:
\begin{align*}
\overline{\calX} = hp\left(D\calX \leftarrow S\calX \rightarrow cS\calX\right).
\end{align*}
where $cS\calX$ is the cone on $S\calX$.
The singular set $\overline{\rho}$ is the homotopy pushout:
\begin{align*}
\overline{\rho} = hp\left(D\rho \leftarrow S\rho \rightarrow cS\rho\right)	
\end{align*}
We then define:
\begin{align*}
T = hp\left(D\calB \leftarrow S\calB \rightarrow cS\calX\right).	
\end{align*}

\begin{clm}\label{hpdescriptions}
$T$ is a closed tubular neighborhood of $\overline{\rho}$ in $\overline{\calX}$ in the sense of the remark following Lemma \hyp{invariance}.
Moreover, the nonsingular boundary $\partial T$ is the homotopy pushout:
\begin{align*}
	\partial T = hp\left(D\calL \leftarrow S\calL \rightarrow S\calM\right)
\end{align*}
and $X-T^\circ$ is the homotopy pushout:
\begin{align*}
	X-T^\circ = hp\left(D\calM \leftarrow S\calM \rightarrow S\calM\right)
\end{align*}
\end{clm}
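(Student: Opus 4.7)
The plan is to derive all three descriptions from the master homotopy pushout $\overline{\calX} = hp(D\calX \leftarrow S\calX \rightarrow cS\calX)$ by restriction along the covering $X = B \cup_L M$, and then separately verify the point-set requirements of the ``closed tubular neighborhood'' condition.

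First I would refine the master pushout. Since $\calX \to X$ is a vector bundle and $X = B \cup_L M$ with $L = B \cap M$, the disk and sphere bundles themselves decompose as pushouts $D\calX = D\calB \cup_{D\calL} D\calM$ and $S\calX = S\calB \cup_{S\calL} S\calM$. Substituting these into the master pushout expresses $\overline{\calX}$ as an iterated pushout of the pieces $D\calB$, $D\calM$, and $cS\calX$, glued along $D\calL$, $S\calB$, and $S\calM$, with triple overlap $S\calL$. Within this refined description, $T = D\calB \cup_{S\calB} cS\calX$ appears as the obvious ``$B$-plus-cone'' sub-pushout, and $D\calM$ is its complementary piece. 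Reading off $\partial T$ then amounts to tracking which external faces of $T$ meet $D\calM$ inside $\overline{\calX}$: the unglued face of $D\calB$ is $D\calL$, and the unglued face of $cS\calX$ (namely $S\calX$ minus the already-internal $S\calB$) is $S\calM$; these meet along $S\calL$, yielding $\partial T = hp(D\calL \leftarrow S\calL \rightarrow S\calM)$. Since $T$ contains all of $cS\calX$ and all of $D\calB$, the complement of $T^\circ$ in $\overline{\calX}$ is exactly $D\calM$, which is trivially the stated pushout $hp(D\calM \leftarrow S\calM \xrightarrow{\id} S\calM)$.

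Next I would verify the point-set requirements of the remark following Lemma \hyp{invariance}. The singular locus has an analogous pushout description $\overline{\rho} = D\rho \cup_{S\rho} cS\rho$, and the inclusions $D\rho \subset D\calB$, $S\rho \subset S\calB$, $cS\rho \subset cS\calX$ are each compatible with the respective gluings, so $\overline{\rho} \subset T$. A shrinking family $T(\epsilon)$ is obtained by simultaneously scaling the bundle radii in $D\calB$ and the cone parameter in $cS\calX$ by $\epsilon$; each $T(\epsilon)$ retains the same pushout form, the family is increasing with $\bigcap_{\epsilon > 0} T(\epsilon) = \overline{\rho}$, the boundary $\partial T(\epsilon)$ is a smooth submanifold of $\overline{\calX} - \overline{\rho}$, and the evident radial trivializations provide the stratified homotopy equivalences required by Lemma \hyp{strathomeo}.

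The main obstacle I anticipate is maintaining rigorous control over the distinction between honest set-theoretic subspaces of $\overline{\calX}$ and their abstract homotopy pushout presentations, particularly when identifying $T^\circ$, $\partial T$, and $\overline{\calX} - T^\circ$ and confirming that the shrinking family $T(\epsilon)$ genuinely satisfies conditions (i)--(iii) of the remark in a stratum-preserving manner.
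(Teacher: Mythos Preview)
Your proposal is correct and follows essentially the same approach as the paper: both identify $\partial T$ and $\overline{\calX}-T^\circ$ by refining the master pushout along the decomposition $X = B \cup_L M$, and both verify the tubular neighborhood conditions via the evident deformation retractions of each pushout piece onto the corresponding piece of $\overline{\rho}$. The paper packages the latter as a single diagram of inclusions $D\rho \subset D\calB$, $S\rho \subset S\calB$, $cS\rho \subset cS\calX$ (each a deformation retract, hence so is $\overline{\rho} \hookrightarrow T$), whereas you phrase it as an explicit scaling family $T(\epsilon)$; these are the same idea, and the paper additionally records the small observation that $D\calL$ and $S\calM$ are manifolds with common boundary $S\calL$, so $\partial T$ is a closed manifold.
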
 
\begin{proof}
Consider the inclusions of diagrams:
$$\begin{tikzcd}[column sep = small]
D\calX & \ar[l] S\calX \ar[r] & cS\calX\\
D\calB \ar[u] & \ar[l] S\calB \ar[u] \ar[r] & cS\calX \ar[u]\\
D\rho \ar[u] & \ar[l] S\rho \ar[u] \ar[r] & cS\rho \ar[u].
\end{tikzcd}$$
Each vertical inclusion from the lower half of the diagram is the inclusion of a deformation retract, so that the inclusion of homotopy pushouts $\overline{\rho} \rightarrow T$ is also the inclusion of a deformation retract.
This provides our family $T(\epsilon)$ as in the remark following Lemma \hyp{invariance}.
Therefore, once we check in the following paragraph that $\partial T$ is a submanifold of $\overline{\calX}$, we have our tubular neighborhood $T$. 

By inspection, the boundary of $\partial T$ in $\overline{\calX}$ is the homotopy pushout:
$$hp\left(D\calL \leftarrow S\calL \rightarrow S\calM\right).$$
Because $D\calL$ and $S\calM$ are manifolds with boundary $S\calL$, it follows that the homotopy pushout $\partial T$ is in fact a closed manifold. 
By another inspection the complement $\overline{\calX} - T^\circ$ is the homotopy pushout:
$$hp\left(D\calM \leftarrow S\calM \rightarrow S\calM\right)$$
which again is a manifold with boundary $\partial T$.
In particular, $\partial T$ is a submanifold of $\overline{\calX}$.

\end{proof}

We will need to analyze various homologies and intersection homologies related to the tube in order to construct an intersection space.
First we find:

\begin{clm}
\begin{align*}
IH_*(S\calX) = \left\{\begin{array}{ll}
\Z & \mbox{if } * = 0,3\\
\Z_3 & \mbox{if } * = 1\\
0 & \mbox{otherwise.}
\end{array}\right.
\end{align*}
\end{clm}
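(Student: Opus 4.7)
The plan is to apply Mayer--Vietoris for intersection homology to the decomposition $S\calX = S\calB \cup S\calM$ with $S\calB \cap S\calM = S\calL$, which follows from $X = B \cup M$, $B \cap M = L$ and the bundle structure of $(\calX,\pi)$. Because $B$ is contractible and $M$ and $L$ each have vanishing $H^2$, the line bundle $\calX$ is trivializable on each; fixing trivializations $\tau_B$ of $\calB$ and $\tau_M$ of $\calM$, one obtains identifications $S\calB \cong cL \times S^1$, $S\calM \simeq T^2$, and $S\calL = T^2_0 \sqcup T^2_1$ where each $T^2_i = L_i \times S^1$.

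First, I compute the intersection homology of each piece. Since $S\calM$ and $S\calL$ are smooth manifolds, $IH_*(S\calM) = H_*(T^2)$ and $IH_*(S\calL) = H_*(T^2 \sqcup T^2)$. For $S\calB \cong cL \times S^1$, the cone formula applied to the $2$-dimensional cone $cL$ (with $\bar m(2) = 0$) gives $IH_0(cL) = H_0(L) = \Z^2$ and $IH_i(cL) = 0$ for $i \geq 1$; K\"unneth (applicable because $S^1$ is a manifold) then yields $IH_0(S\calB) = IH_1(S\calB) = \Z^2$ and $IH_i(S\calB) = 0$ for $i \geq 2$.

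The essential input is the clutching datum of $\calX$ over $L$. The restrictions of $\tau_B$ and $\tau_M$ to each $L_i \cong S^1$ differ by a gauge map $S^1 \to U(1)$ of some degree $m_i \in \Z$. A parallel Mayer--Vietoris identifies $H^2(X;\Z) \cong \coker[H^1(M) \to H^1(L)] \cong \Z$ via $(m_0,m_1) \mapsto m_0 - m_1$; since $\calX = \calO(1)|_X$ has degree $3$, we obtain $m_0 - m_1 = \pm 3$. Consequently, relative to the $\tau_M$-trivialization of $S\calM$, the inclusion $T^2_i \hookrightarrow S\calM$ sends the $L_i$-generator $\alpha_i$ of $H_1(T^2_i)$ to $\alpha_M + m_i \beta_M$ (base plus $m_i$ copies of the fiber) and the fiber generator $\beta_i$ to $\beta_M$, while relative to the $\tau_B$-trivialization of $S\calB$, $\alpha_i$ bounds in $IH_1(S\calB)$ (it is the link of the cone point in the $S^1$-direction and is killed by the cone formula) and $\beta_i$ maps to the $i$-th basis vector of $IH_1(S\calB) = IH_0(cL) \otimes H_1(S^1) = \Z^2$.

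A finite-rank linear algebra computation then identifies the Mayer--Vietoris connecting map at degree $1$ as a $\Z^4 \to \Z^4$ map with determinant $\pm(m_0 - m_1) = \pm 3$, hence trivial kernel and cokernel $\Z_3$; at degree $2$, $H_2(S\calL) = \Z^2 \to H_2(S\calM) = \Z$ is the surjective sum map on fundamental classes; and at degree $0$, $H_0(S\calL) = \Z^2 \to IH_0(S\calB) \oplus H_0(S\calM) = \Z^3$ is injective. Assembling these in the intersection Mayer--Vietoris long exact sequence yields $IH_*(S\calX) = (\Z, \Z_3, 0, \Z)$ in degrees $0,1,2,3$, as claimed. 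The main obstacle is bookkeeping: correctly accounting for the clutching degree $m_i$ in the inclusion-induced map $H_1(T^2_i) \to H_1(S\calM)$, without which one would incorrectly obtain $IH_2(S\calX) = \Z$ and $IH_1(S\calX) = 0$; everything else reduces to the cone formula, K\"unneth, and a short computation.
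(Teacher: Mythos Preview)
Your argument is correct, but it takes a genuinely different route from the paper's. The paper instead normalizes: it pulls back the circle bundle $S\calX \to X$ along the normalization $\nu: S^2 \to X$ to obtain a principal $S^1$-bundle $S\mathcal{Z} \to S^2$, identifies its Euler class as $\pm 3 \in H^2(S^2;\Z)$ (because $X$ has degree $3$ in $\C P^2$), recognizes $S\mathcal{Z}$ as the lens space $L(3,1)$ with $H_* = (\Z,\Z_3,0,\Z)$, and finally invokes the fact (\cite[I.1.6]{Bo}) that a normalization induces an isomorphism on intersection homology, so $IH_*(S\calX) \cong H_*(S\mathcal{Z})$.

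Your Mayer--Vietoris approach is more hands-on but has the virtue of staying entirely within the given decomposition $S\calX = S\calB \cup_{S\calL} S\calM$ already used elsewhere in the section, and avoids invoking the normalization/$IH$ compatibility. Both proofs ultimately feed on the same arithmetic input: the degree $3$ of $X$ appears in the paper as the Euler class of $S\mathcal{Z} \to S^2$ and in your argument as the difference $m_0 - m_1$ of clutching degrees. The paper's route is shorter because the lens-space homology is classical; yours is longer but more self-contained. One small comment: your parenthetical about $\alpha_i$ being ``the link of the cone point in the $S^1$-direction'' is garbled---$\alpha_i$ is the $L_i$-loop in the $cL$-factor and dies because $IH_1(cL)=0$---but your conclusion there is correct.
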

\begin{proof}
Since $X$ is a pinched torus, it has normalization $\nu: S^2 \rightarrow X$ where $S^2$ is a two-sphere.
Let $S\mathcal{Z}$ and $\hat{\nu}$ be such that the below is a pair of pullback diagrams:
$$\begin{tikzcd}
S\mathcal{Z} \ar[d] \ar[r, "\hat{\nu}"] & S\calX \ar[d] \ar[r] & S^5 \ar[d] \\
S^2 \ar[r, "\nu"] & X \ar[r] & \C P^2.
\end{tikzcd}$$
Then $\hat{\nu}$ is a normalization and $S\mathcal{Z}$ is a principal circle bundle over $S^2$. 
Each principal circle bundle corresponds to an element of $H^2(S^2;\Z)$. 
Let us determine to which element $S\mathcal{Z}$ corresponds. 

Since $S^5 \rightarrow \C P^2$ is the pullback of the universal circle-bundle $S^\infty \rightarrow \C P^\infty$ under the inclusion, by composing pullbacks it follows that $S\mathcal{Z} \rightarrow S^2$ is the pullback of the universal bundle under:
\begin{align*}
S^2 \xrightarrow{\nu} X \hookrightarrow \C P^2 \hookrightarrow \C P^{\infty}. 	
\end{align*}
We analyze this map on second cohomology.
Both $\nu$ and $\C P^2 \hookrightarrow \C P^{\infty}$ induce isomorphisms on second cohomology.
Because $X$ is degree $3$ the map $\Z \cong H^2(\C P^2;\Z) \rightarrow H^2(X;\Z) \cong \Z$ is multiplication by $\pm 3$.
Therefore the composition $H^2(\C P^{\infty}; \Z) \rightarrow H^2(S^2; \Z)$ is multiplication by $\pm 3$.
Hence $S\mathcal{Z}$ is the unique principal circle bundle corresponding to $\pm 3 \in H^2(S^2;\Z)$.
A standard argument then shows that:
\begin{align*}
H_*(S\mathcal{Z}) = \left\{\begin{array}{ll}
\Z & \mbox{if } * = 0,3\\
\Z_3 & \mbox{if } * = 1\\
0 & \mbox{otherwise.}
\end{array}\right.
\end{align*}
But $\hat{\nu}: S\mathcal{Z} \rightarrow S\calX$ is a normalization, so as detailed in \cite[I.1.6]{Bo} induces an isomorphism $H_*(S\mathcal{Z}) \cong IH_*(S\calX)$.
\end{proof}

Next we work on the rational intersection homology of tubular neighborhood $T$.

\begin{clm}
\begin{align*}
IH_*(T; \Q) = \left\{\begin{array}{ll}
\Q & \mbox{if } * = 0\\
\Q^2 & \mbox{if } * = 2\\
0 & \mbox{otherwise.}
\end{array}\right.
\end{align*}
\end{clm}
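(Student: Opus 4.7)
\medskip

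\noindent\textbf{Proof plan.} My plan is to compute $IH_*(T;\Q)$ via the Mayer--Vietoris long exact sequence associated to the homotopy-pushout description of $T$ established in Claim \hyp{hpdescriptions}. I would cover $T$ by open thickenings $U \supset D\calB$ and $V \supset cS\calX$ with $U \cap V \supset S\calB$, chosen so that each of the three inclusions $D\calB \hookrightarrow U$, $cS\calX \hookrightarrow V$, $S\calB \hookrightarrow U\cap V$ is a codimension-preserving stratified homotopy equivalence relative to the stratification of $T$ induced from $\overline{\calX}$. This allows me to replace intersection homology of the open pieces by that of the three model subspaces.

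I would then evaluate each of the three terms rationally. For $cS\calX$, the cone formula (Friedman 4.2.1) with $\bar{m}(4)=1$ gives a strict cutoff in degrees $i<2$; combined with the previous claim and the fact that the $\Z_3$ summand in $IH_1(S\calX)$ dies rationally, this yields $IH_*(cS\calX;\Q)=\Q$ in degree $0$ and zero elsewhere. For $D\calB$, contractibility of $B$ trivializes the complex line bundle $\calB$, so $D\calB \cong B \times D^2$ as a stratified pseudomanifold (the product stratification $(B\setminus\{p\})\times D^2$, $\{p\}\times D^2$ agrees with that induced from $T$); Künneth with the nonsingular factor $D^2$ reduces the computation to $IH_*(B;\Q)$, and a second application of the cone formula to $B=cL$ (with $\bar{m}(2)=0$, cutoff $i<1$) gives $IH_0(B;\Q)=H_0(L;\Q)=\Q^2$ and zero otherwise. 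The same Künneth argument applied to $S\calB \cong B\times S^1$ yields $\Q^2$ in each of degrees $0$ and $1$ and zero elsewhere.

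Feeding these into the Mayer--Vietoris sequence, the central step is analyzing the connecting map $\phi\colon IH_0(S\calB)=\Q^2 \to IH_0(D\calB)\oplus IH_0(cS\calX)=\Q^2\oplus\Q$. Its $IH_0(D\calB)$-component is an isomorphism because the inclusion $S\calB\hookrightarrow D\calB$ respects the splitting into the two local branches at $p$ visible after normalization of $B$, while its $IH_0(cS\calX)$-component is the summation map, since both branches of $S\calB$ lie inside the connected normalization $S\mathcal{Z}$ of $S\calX$. Hence $\phi$ is injective with one-dimensional cokernel, giving $IH_0(T)=\Q$ and $IH_1(T)=0$. The remaining nontrivial segment $0 \to IH_2(T) \to IH_1(S\calB)=\Q^2 \to 0$ then yields $IH_2(T)=\Q^2$, and $IH_i(T)=0$ for $i\geq 3$ since every neighboring group vanishes. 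I expect the main obstacle to be the bookkeeping needed to confirm that the product stratifications on $B\times D^2$ and $B\times S^1$ really do match the stratifications induced from $T$, so that Künneth with an unstratified factor is applicable, and making precise the identification of the $IH_0(cS\calX)$-component of $\phi$ with summation via the normalization $\hat{\nu}\colon S\mathcal{Z}\to S\calX$.
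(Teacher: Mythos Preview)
Your proposal is correct and follows essentially the same route as the paper: both use the Mayer--Vietoris sequence arising from the homotopy-pushout description $T = hp(D\calB \leftarrow S\calB \rightarrow cS\calX)$, trivialize the bundles over contractible $B$ to identify $D\calB \cong B\times D^2$ and $S\calB \cong B\times S^1$, and then apply the cone formula together with K\"unneth for intersection homology (Friedman 4.2.1 and 6.4.10) to reduce everything to $H_*^{<1}(L;\Q)$ and $IH_*^{<2}(S\calX;\Q)$. The only difference is one of presentation: the paper observes injectivity at $i=0$ simply because the first summand of $H_0(L;\Q)\to H_0(L;\Q)\oplus IH_0(S\calX;\Q)$ is the identity, whereas you phrase the same point via the normalization; and the paper does not pause over the stratification-compatibility issue you flag, treating the product identifications as evident.
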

\begin{proof}
Given the description of $T$ as the homotopy pushout of:
\begin{align*}
D\calB \leftarrow S\calB \rightarrow cS\calX	
\end{align*}
and the fact that $D\calB$ and $S\calB$ are circle bundles over contractible $B$, we obtain a long exact sequence:
$$\cdots \rightarrow IH_i(B \times S^1;\Q) \rightarrow IH_i(B \times D^2;\Q) \oplus IH_i(cS\calX;\Q) \rightarrow IH_i(T;\Q) \rightarrow \cdots$$
which, using the cone formula and K\"{u}nneth for intersection homology (see \cite[Theorem 4.2.1, Corollary 6.4.10]{Fr}), becomes:
$$\cdots \rightarrow \left[H_*^{<1}(L;\Q) \otimes H_*(S^1;\Q)\right]_i \rightarrow \left[H_*^{<1}(L;\Q) \otimes H_*(D^2;\Q)\right]_i \oplus IH_i^{< 2}(S\calX;\Q) \rightarrow IH_i(T;\Q) \rightarrow \cdots.$$ 
Recall that $L \cong S^1 \sqcup S^1$.
For $i = 0$ the first map is given by:
\begin{align*}
H_0(L;\Q) \rightarrow H_0(L;\Q) \oplus IH_0(S\calX;\Q).
\end{align*}
which is obviously injective.
Then for $i = 1$ we have exact:
\begin{align*}
H_0(L;\Q) \otimes H_1(S^1;\Q) \rightarrow IH_1(S\calX;\Q) \rightarrow IH_1(T;\Q) \rightarrow 0.
\end{align*}
But $IH_1(S\calX;\Q) = 0 \implies IH_1(T; \Q) = 0$.
For $i = 2$ we have:
\begin{align*}
0 \rightarrow IH_2(T;\Q) \rightarrow H_0(L;\Q) \otimes H_1(S^1;\Q) \rightarrow 0
\end{align*}
which implies $IH_2(T;\Q) \cong \Q^2$. 
The remaining homology $\Q$-vector spaces are trivially computed owing to the vanishing of many terms in the long exact sequence. 
\end{proof}

Next let's provide exact descriptions for $X-T^\circ$ and $\partial T$.

\begin{clm}\label{hompush}
$X-T^\circ \cong S^1 \times D^3$ and $\partial T \cong S^1 \times S^2$. 
\end{clm}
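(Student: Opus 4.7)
The plan is to reduce both identifications to explicit product decompositions by exploiting the triviality of the complex line bundles $\calL$ and $\calM$. Since complex line bundles on a paracompact base are classified by their first Chern class in $H^2(-;\Z)$, and both $L \cong S^1 \sqcup S^1$ and $M \cong S^1 \times D^1$ are homotopy equivalent to $1$-dimensional complexes, we have $H^2(L;\Z) = H^2(M;\Z) = 0$. Thus $\calL \cong L \times \C$ and $\calM \cong M \times \C$, yielding $D\calM \cong M \times D^2$, $S\calM \cong M \times S^1$, $D\calL \cong L \times D^2$, and $S\calL \cong L \times S^1$, and the gluing maps in Claim \hyp{hpdescriptions} split compatibly as products of base and fiber inclusions ($L \hookrightarrow M$ on the base, $S^1 \hookrightarrow D^2$ on the fiber).

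For $\partial T$, I would substitute these product descriptions into the pushout from Claim \hyp{hpdescriptions} and then use $M = S^1 \times D^1$ and $L = \partial M = S^1 \times \partial D^1$ to factor out the $S^1$-factor common to $L$ and $M$, obtaining
\begin{align*}
\partial T &\cong (L \times D^2) \cup_{L \times S^1} (M \times S^1)\\
&\cong S^1 \times \bigl[(\partial D^1 \times D^2) \cup_{\partial D^1 \times S^1} (D^1 \times S^1)\bigr].
\end{align*}
The bracketed space is a cylinder $D^1 \times S^1$ with a $2$-disk attached to each of its two boundary circles, which is homeomorphic to $S^2$. Hence $\partial T \cong S^1 \times S^2$.

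For $X - T^\circ = hp(D\calM \leftarrow S\calM \to S\calM)$, I would observe that the right-hand map is the identity: it records the identification of the sphere-bundle boundary of $D\calM$ with the copy of $S\calM$ sitting inside $S\calX$ and contributing to $\partial T$ through the cone construction. Consequently the pushout collapses to $D\calM$, and under the trivialization this is $M \times D^2 \cong S^1 \times D^1 \times D^2 \cong S^1 \times D^3$. The main point requiring care is that Claim \hyp{hpdescriptions} must be interpreted strictly enough to yield genuine homeomorphisms rather than mere homotopy equivalences; this is justified because each gluing occurs along a closed embedding of a submanifold-with-boundary admitting a collar, so the homotopy pushout coincides with the ordinary (colimit) pushout, and the above product decompositions yield actual homeomorphisms.
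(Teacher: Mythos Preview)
Your argument is correct and, for $X-T^\circ$, essentially identical to the paper's: both observe that the homotopy pushout $hp(D\calM \leftarrow S\calM \rightarrow S\calM)$ collapses to $D\calM$ (the paper phrases this via the collar of $S\calM$ in $D\calM$), then trivialize $\calM$ using $H^2(M;\Z)=0$ to get $D\calM \cong M\times D^2 \cong S^1\times D^3$.

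For $\partial T$ there is a mild difference. You compute $\partial T$ directly from the pushout description $hp(D\calL \leftarrow S\calL \rightarrow S\calM)$, trivializing $\calL$ as well and factoring out the common $S^1$ to recognize $S^2$ as two disks glued to a cylinder. The paper's written proof actually treats only $X-T^\circ$; the identification $\partial T \cong S^1\times S^2$ is left implicit, presumably as $\partial(X-T^\circ)\cong \partial(S^1\times D^3)=S^1\times S^2$. Your route is slightly longer but more self-contained, and it has the advantage of not relying on the homeomorphism $X-T^\circ \cong S^1\times D^3$ carrying $\partial T$ to the boundary (which is clear here, but your direct computation sidesteps the question). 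Either way the content is the same.
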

\begin{proof}
By Claim \hyp{hpdescriptions} the space $X-T^\circ$ is the homotopy pushout:
\begin{align*}
hp\left(D\calM \leftarrow S\calM \rightarrow S\calM\right)	
\end{align*}
which (since $S\calM$ has a collar neighborhood in $D\calM$) is homeomorphic to $D\calM$.
But $H^2(M;\Z) = 0$ so the complex vector bundle $\calM$ is trivial.
Hence:
\begin{align*}
D\calM \cong M \times D^2 \cong S^1 \times D^1 \times D^2 \cong S^1 \times D^3.	
\end{align*}
\end{proof}

\subsection{The Intersection Space}

We rely on the results of the preceding subsection to construct a topological intersection approximation.
Let $S^2 \xrightarrow{f} S^1 \times S^2 \cong \partial T$ be the inclusion of a sphere such that $f$ induces an isomorphism on second homology.

\begin{clm}\label{intapproxex}
The pair $(S^2, f)$ is a topological intersection approximation for $T$ with coefficients in $\Q$.   
\end{clm}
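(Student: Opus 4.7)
The plan is to unpack the definition and check, using the computations already assembled in the preceding claims, that the composition
\[H_\bullet(S^2;\Q) \xrightarrow{f_*} H_\bullet(\partial T;\Q) \twoheadrightarrow Z^{\bar{m}}_\bullet\]
is an isomorphism. Since $\partial T \cong S^1 \times S^2$ by Claim \hyp{hompush}, the space $H_\bullet(\partial T;\Q)$ is $\Q$ in each of the degrees $0,1,2,3$, while $H_\bullet(S^2;\Q)$ is $\Q$ concentrated in degrees $0$ and $2$. The hypothesis on $f$ together with path-connectedness of $\partial T$ makes $f_*$ an isomorphism onto the $H_0$ and $H_2$ summands of $H_\bullet(\partial T;\Q)$. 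So the entire task reduces to identifying $Z^{\bar{m}}_\bullet$.

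For degrees where $IH_i(T;\Q) = 0$, namely $i \neq 0,2$, the quotient $Z^{\bar{m}}_i$ is automatically zero. In degree $0$, both $H_0(\partial T;\Q)$ and $IH_0(T;\Q)$ equal $\Q$ (as connected spaces), so the map is an isomorphism and $Z^{\bar{m}}_0 = \Q$. The composition in degrees $0$ and in all degrees outside $\{0,2\}$ is therefore already seen to be an isomorphism.

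The only step with genuine content is degree $2$. First I would record $IH_\bullet(T,\partial T;\Q)$ using Lemma \hyp{tubneighdual} (Lefschetz duality for the tube, with $n = 4$) together with universal coefficients, obtaining $IH_i(T,\partial T;\Q) \cong IH_{4-i}(T;\Q)^*$; this is $\Q$ in degree $4$, $\Q^2$ in degree $2$, and zero elsewhere. The portion of the long exact sequence of the pair $(T,\partial T)$ around degree $2$ then reads
\[IH_3(T,\partial T;\Q) \;=\; 0 \longrightarrow H_2(\partial T;\Q) \longrightarrow IH_2(T;\Q),\]
forcing $H_2(\partial T;\Q) \to IH_2(T;\Q)$ to be injective. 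Consequently $Z^{\bar{m}}_2$ is one-dimensional and the projection $H_2(\partial T;\Q) \to Z^{\bar{m}}_2$ is an isomorphism, so the composition in degree $2$ is also an isomorphism. I do not anticipate any substantial obstacle: all the heavy lifting is already carried by the preceding claims, and the only maneuver is extracting injectivity of the boundary-to-interior map from the vanishing of the adjacent term $IH_3(T,\partial T;\Q)$ via duality.
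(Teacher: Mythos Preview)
Your proof is correct and follows essentially the same approach as the paper: both arguments reduce to showing that $H_2(\partial T;\Q) \to IH_2(T;\Q)$ is injective, and both obtain this from the vanishing $IH_3(T,\partial T;\Q) \cong IH_1(T;\Q)^* = 0$ via duality and the long exact sequence of the pair. The only cosmetic difference is that you compute all of $IH_\bullet(T,\partial T;\Q)$ whereas the paper only records the single degree it needs.
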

\begin{proof}
We first must understand the map $H_\bullet(\partial T;\Q) \rightarrow IH_\bullet(T;\Q)$ and its image $Z_\bullet$.
For dimensional reasons, the description of the map is only unclear in degree two.
In this case we have exact:
$$IH_3(T,\partial T;\Q) \rightarrow H_2(\partial T;\Q) \rightarrow IH_2(T;\Q).$$
Duality shows that:
$$IH_3(T,\partial T;\Q) \cong IH_1(T;\Q)^* = 0$$
and consequently that the induced map $H_2(\partial T;\Q) \rightarrow IH_2(T;\Q)$ is an injection.
We explicitly specify:
$$Z_* = \left\{\begin{array}{ll}
im \left[H_0(\partial T;\Q) \hookrightarrow IH_0(T;\Q)\right] & \mbox{if } * = 0\\
im \left[H_2(\partial T;\Q) \hookrightarrow IH_2(T;\Q)\right] & \mbox{if } * = 2\\
0 & \mbox{otherwise.}
\end{array}\right.$$ 
The map $H_*(S^2;\Q) \xrightarrow{f_*} H_*(\partial T;\Q)$ is an isomorphism for $* = 0,2$ and $H_*(S^2;\Q)$ vanishes otherwise.
So the composition:
$$H_\bullet(S^2;\Q) \xrightarrow{f_*} H_\bullet(\partial T;\Q) \rightarrow Z_\bullet$$
is an isomorphism.   
\end{proof}

We have a topological intersection approximation for the tube $T$, so are granted a topological intersection space $I_f\overline{\calX}$.
It is obtained by coning off an embedded $S^2$ in the boundary $\partial T \cong S^1 \times S^2$ of $\overline{\calX} - T^\circ$.
The long exact sequence associated to the inclusion $S^2 \hookrightarrow S^1 \times D^3 \cong \overline{X}-T^\circ$ gives:
\begin{align*}
H_*(I_f\overline{\calX};\Q) = \left\{\begin{array}{ll}
\Q & \mbox{if}\ * = 0,1,3\\
0 & \mbox{otherwise.}	
\end{array}\right.
\end{align*}
Alternatively, check that $I_f\overline{\calX} \simeq S^1 \vee S^3$.
The dual Betti numbers of $I_f\overline{\calX}$ seem to match up.
In fact this is because:

\begin{clm}
The local duality obstructions vanish for the intersection approximation $S^2 \xrightarrow{f} \partial T$.
Therefore, the intersection space $I_f \overline{\calX}$ satisfies duality.
\end{clm}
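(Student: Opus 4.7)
The plan is to apply Proposition \hyp{intpairing}, which reduces vanishing of all local duality obstructions to a statement about the intersection pairing on the nonsingular boundary $\partial T$. Since $\overline{\calX}$ is a Witt space and we are working with middle perversity, the same approximation $(S^2, f)$ serves as both the $\bar{p}$ and $\bar{q}$ approximation, and it is enough to check that for every $r$ and every pair
\[ \alpha, \beta \in \im f_* \subset H_\bullet(\partial T;\Q) \quad \text{with} \quad |\alpha| + |\beta| = n - 1, \]
the intersection product $(\alpha, \beta)$ on $\partial T$ vanishes.

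Here $n = \dim_{\R} \overline{\calX} = 4$, so the relevant constraint is $|\alpha| + |\beta| = 3$. The key observation is a degree count: the image of $f_* \colon H_*(S^2;\Q) \to H_*(\partial T;\Q)$ is concentrated in degrees $0$ and $2$, since those are the only degrees in which $H_*(S^2;\Q)$ is nonzero. Consequently, if both $\alpha$ and $\beta$ lie in $\im f_*$, then $|\alpha|, |\beta| \in \{0,2\}$ and so $|\alpha| + |\beta| \in \{0,2,4\}$, which never equals $3$. For every $r$, at least one of $\im f_* \cap H_r(\partial T;\Q)$ or $\im f_* \cap H_{3-r}(\partial T;\Q)$ is therefore trivial, and the hypothesis of Proposition \hyp{intpairing} is satisfied vacuously.

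Having verified that every local duality obstruction vanishes, the duality statement for the intersection space $I_f\overline{\calX}$ follows from the topological corollary to Theorem \hyp{algdual}. There is no real obstacle here: the entire argument is a parity/degree check made possible by the fact that the chosen approximation $S^2$ has its homology concentrated in even degrees while $\partial T$ has odd real dimension, so Poincaré-dual degrees for the intersection pairing on $\partial T$ always straddle the parity gap.
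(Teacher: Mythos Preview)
Your proof is correct and takes a genuinely different route from the paper's own argument. The paper verifies vanishing of the local duality obstructions by directly checking commutativity of the defining diagram
\[
\begin{tikzcd}
Z_{3-r}^* \ar[r] & H_{3-r}(\partial T; \Q)^* \ar[r, "f^*"] & H_{3-r}(S^2;\Q)^*\\
Y_{r+1} \ar[u, "D"] \ar[r] & H_r(\partial T;\Q) \ar[u, "D"] \ar[r] & H_r(cf_\#) \ar[u, "D"]
\end{tikzcd}
\]
case by case: for $r\neq 1,3$ the upper-right corner vanishes, and for $r=1,3$ the map $f^*$ is an isomorphism, forcing all the arrows to be isomorphisms and hence the square to commute. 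You instead invoke Proposition~\ref{intpairing}, which recasts the obstruction as a condition on the intersection pairing on $\partial T$, and then observe that $\im f_*$ lives entirely in even degrees while $\dim\partial T=3$ is odd, so no pair of classes from $\im f_*$ can be in complementary degrees. Your argument is shorter and more conceptual, and it explains transparently \emph{why} the obstructions vanish (a parity mismatch), whereas the paper's direct verification is more hands-on but does not rely on having already established Proposition~\ref{intpairing}. Note, however, that the paper's remark immediately following the claim extends the example to approximations built from wedges of spheres \emph{and circles}; your parity argument would need modification in that generality, since circles contribute odd-degree homology.
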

\begin{proof}
As usual we set:
\begin{align*}
Z_\bullet &= im\, H_\bullet(\partial T;\Q) \rightarrow IH_\bullet(T;\Q)\\
Y_\bullet &= coker\, H_\bullet(T;\Q) \rightarrow IH_\bullet(T,\partial T;\Q). 
\end{align*}
Consider the diagram:
$$\begin{tikzcd}
Z_{3-r}^* \ar[r] & H_{3-r}(\partial T; \Q)^* \ar[r, "f^*"] & H_{3-r}(S^2;\Q)^*\\
Y_{r+1} \ar[u, "D"] \ar[r] & H_r(\partial T;\Q) \ar[u, "D"] \ar[r] & H_r(cf_\#) \ar[u, "D"] 
\end{tikzcd}$$
which a priori need not commute.
The left box always commutes, and the outer box commutes by construction.
The local duality obstruction vanishes if and only if the right box also commutes.

Commutativity is obvious when $r \neq 1,3$ because the upper-rightmost term vanishes.
When $r = 1$ or $r = 3$ the map $f^*$ is an isomorphism (for $r  = 1$ see the proof of Claim \hyp{intapproxex}), from which it can derived that all maps are isomorphisms, in which case the box again commutes (owing to the commutativity of the left box and the outer box).
\end{proof}

\textit{Remark.} With some effort, this example can be extended to the projective cone on \textit{any} irreducible hypersurface in $\C P^2$ with isolated singularity.
In this general case, the topological local intersection approximation will be composed of a wedge of spheres and circles.
Again, the local duality obstructions will vanish.

\medskip

\subsection{Comparison with the AF intersection space.}
We can compare with the method introduced in \cite{AF}, and will show that their \textit{AF intersection space pair} is distinct from our notion of algebraic intersection space even on the level of homology.
Since the strata $\rho$ and $\{\infty\}$ are contractible, \cite[Theorem 3.30]{AF} implies that their construction applies.
We avoid excruciating detail, choosing only to outline the construction of this AF intersection space pair $\left(I\overline{\calX}_{AF}, \overline{\rho}_{AF}\right)$. 

\cite{AF} requires a decomposition of the tubular neighborhood $T$, which we provide in this paragraph.
Keeping in mind the homotopy pushout descriptions:
\begin{align*}
\overline{\rho} = hp\left(D\rho \leftarrow S\rho \rightarrow cS\rho\right),\ T = hp\left(D\calB \leftarrow S\calB \rightarrow cS\calX\right)	
\end{align*}
with cone point $\infty$,
we define:
\begin{align*}
\overline{\rho}_1 &= hp\left(D\rho \leftarrow S\rho \rightarrow S\rho\right),\ T_1 = hp\left(D\calB \leftarrow S\calB \rightarrow S\calB\right),\ E_1 = hp\left(D\calL \leftarrow S\calL \rightarrow S\calL\right)\\
\overline{\rho}_0 &= cS\rho,\ T_0 = cS\calX,\ E_0 = S\calX.
\end{align*}
Observe that:
\begin{enumerate}[-]
\item $T_1 \cap \overline{\rho} = \overline{\rho}_1$, $T_0 \cap \overline{\rho} = \overline{\rho}_0$, and $\overline{\rho}_0 \cap \overline{\rho}_1 = S\rho$.
\item $E_1 = \partial T_1 \cap (X-T^\circ)$ and $E_0 = \partial T_0$.
\item $E_1$ fibers trivially over $\overline{\rho}_1$ with fiber $L$.
\item $T_1 = cyl\left(E_1 \rightarrow \overline{\rho}_1\right)$, the mapping cylinder of the bundle projection.
\item the pair $(E_0, S\rho)$ fibers trivally over $\{\infty\}$ with fiber $(S\calX, S\rho)$.
\item $(T_0, \overline{\rho}_0) \cong cyl\left((E_0,S\rho) \rightarrow \{\infty\}\right)$, the mapping cylinder pair of the pair of bundle projections. Since $\{\infty\}$ is a point set, this is actually a cone pair.
\item $\partial T_0 \cap \partial T_1 = S\calL = E_1|_{S\rho}$, the restricted fiber bundle over $S\rho$.
\item  $\partial T_0 \cap T_1 = S\calB = cyl\left(E_1|_{S\rho} \rightarrow S\rho \right)$, the mapping cylinder of the bundle projection.
\item  $\partial T_0 = S\calM \cup_{E_1|_{S\rho}} cyl\left(E_1|_{S\rho}\rightarrow S\rho\right)$
\end{enumerate}
Keep these observations in mind when considering the construction detailed in the following paragraph.

The essence of the \cite{AF} construction for this example (up to homotopy, not word-for-word) is the following: 
\begin{enumerate}[(i)]

\item Select a fiberwise truncation $ft_{<1}E_1 \rightarrow E_1$ of the trivial bundle $E_1 \rightarrow \overline{\rho}_1$.
Define $T_1^{AF} = cyl\left(ft_{<1}E_1 \rightarrow \overline{\rho}_1\right)$, the mapping cylinder of the bundle projection.

\item Define a Step 1 AF intersection space:
\begin{align*}
I\overline{\calX}_{AF, 1} = (X-T^\circ) \cup_{ft_{<1}E_1}  T_1^{AF} 	
\end{align*}
by gluing $T_1^{AF}$ to $X-T^\circ$ via:
\begin{align*}
T_1^{AF} \hookleftarrow ft_{<1}E_1 \rightarrow E_1 \hookrightarrow X-T^\circ	
\end{align*}
Effectively, we have deleted $T$ from $X$, then replaced $T_1$ with $T_1^{AF}$.

\item Define pair:
\begin{align*}
	\partial T_0^{AF} = S\calM \cup_{ft_{<1}E_1|_{S\rho}} cyl\left(ft_{<1}E_1|_{S\rho} \rightarrow S\rho\right) \subset I\overline{\calX}_{AF,1}.
\end{align*}
and interpret $\left(\partial T_0^{AF}, S\rho\right)$ as a pair of fiber bundles over the point set $\{\infty\}$.

\item Select a fiberwise truncation of pairs $\left(\partial T_0^{AF}, S\rho\right)_{<2} \rightarrow (\partial T_0^{AF}, S\rho)$ of the pair of fiber bundles $\left(\partial T_0^{AF}, S\rho\right) \rightarrow \{\infty\}$.
Define $(T_0^{AF}, \overline{\rho}_0^{AF}) = cyl\left((\partial T_0^{AF}, S\rho)_{<2} \rightarrow \{\infty\}\right)$, the mapping cylinder pair of the pair of bundle projections.

\item Define the AF intersection space pair:
\begin{align*}
	\left(I\overline{\calX}_{AF}, \overline{\rho}_{AF}\right) = (I\overline{\calX}_{AF,1}, \overline{\rho}_1) \cup_{\left(\partial T_0^{AF}, S\rho\right)_{<2}} \left(T_0^{AF}, \overline{\rho}_0^{AF}\right)  
\end{align*}
by gluing $(T_0^{AF}, \overline{\rho}_0^{AF})$ to $(I\overline{\calX}_{AF,1}, \overline{\rho}_1)$ via:
\begin{align*}
\left(T_0^{AF},\overline{\rho}_0^{AF}\right) \hookleftarrow \left(\partial T_0^{AF}, S\rho\right)_{<2} \rightarrow \left(\partial T_0^{AF}, S\rho\right) \hookrightarrow \left(I\overline{\calX}_{AF,1}, \overline{\rho}_1\right). 	
\end{align*}
Effectively, we have replaced $(T_0, \overline{\rho}_0)$ with $\left(T_0^{AF}, \overline{\rho}_0^{AF}\right)$.
\end{enumerate}

The rational homology of the pair $\left(I\overline{\calX}_{AF}, \overline{\rho}_{AF}\right)$ has the potential to satisfy duality, and is what we will compare the rational homology of our algebraic intersection spaces against. 
Having outlined the construction, let's select fiberwise truncations and determine an explicit AF intersection space.

We include the following claims without proof, as they can be verified in a straightforward manner.

\begin{clm}
Let $L_{<1} = \{*\} \sqcup \{*\} \hookrightarrow L$ be the inclusion of two points into the two disjoint circles that make up $L$.
Let $\calL_{<1}$ denote the restriction of bundle $\calL \rightarrow L$ above subspace $L_{<1}$.
The space $\calL_{<1}$ like $\calL$ can also be interpreted as a trivial bundle over $\rho$, but with fiber $L_{<1}$.
Then:
\begin{align*}
ft_{<1}E_1 = hp\left(D\calL_{<1} \leftarrow S\calL_{<1} \rightarrow S\calL_{<1} \right)	\hookrightarrow E_1 
\end{align*}
is a fiberwise truncation of bundles over $\overline{\rho}_1$.
It is an inclusion.
\end{clm}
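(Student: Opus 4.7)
The plan is to unpack everything in sight as trivial bundle data, then read off the definition of fiberwise truncation from \cite{BaC}. The observations preceding the claim already record that $E_1 \to \overline{\rho}_1$ is a trivial $L$-bundle; concretely, because $L$ is one-dimensional, $H^2(L;\Z)=0$ forces the complex line bundle $\calL \to L$ to be trivial, so $D\calL \cong L \times D^2$ and $S\calL \cong L \times S^1$. Since the right-hand map in the homotopy pushout defining $E_1$ is the identity, $E_1 \simeq D\calL \cong L \times D^2$, and the projection to $\overline{\rho}_1 \simeq D\rho \cong D^2$ is the natural one induced by the tubular projection of $T_1$ onto $\overline{\rho}_1$.

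Next I would verify that $L_{<1}=\{*\}\sqcup\{*\}\hookrightarrow L=S^1\sqcup S^1$, taking one basepoint per component, is a Moore $<1$-approximation of $L$ in the sense of \cite{Ba1}: the induced map on $H_0$ is an isomorphism of two-dimensional vector spaces, and $H_i(L_{<1})=0$ for $i \geq 1$ on purely dimensional grounds. Then I would compute the proposed $ft_{<1}E_1$ in exactly the same way. The restricted bundle $\calL_{<1}$ is the restriction of a trivial bundle to two points, hence $\calL_{<1} = L_{<1} \times \C$, giving $D\calL_{<1} = L_{<1} \times D^2$ and $S\calL_{<1} = L_{<1} \times S^1$, and the homotopy pushout (right map identity) collapses to $L_{<1} \times D^2$.

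Finally, the map of homotopy pushouts induced by $L_{<1}\hookrightarrow L$ is the evident inclusion $L_{<1}\times D^2 \hookrightarrow L \times D^2$, which covers the identity on $\overline{\rho}_1$ and restricts on each fiber to the Moore approximation just verified. Since a fiberwise truncation of a trivial bundle in the sense of \cite{BaC} is nothing more than the product of the base with a Moore approximation of the fiber, this matches the definition on the nose and is manifestly an inclusion. The only mildly nontrivial point is naturality: that the homotopy pushout constructions defining $ft_{<1}E_1$ and $E_1$ really assemble into the displayed inclusion, but this follows immediately from functoriality of the disk and sphere bundle constructions under restriction along the subspace inclusion $L_{<1}\hookrightarrow L$.
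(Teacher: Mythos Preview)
Your proposal is correct. The paper explicitly omits the proof of this claim, stating that it ``can be verified in a straightforward manner,'' and your argument is precisely the kind of straightforward verification the author has in mind: trivialize $\calL$ using $H^2(L;\Z)=0$, recognize $E_1$ as $L \times \overline{\rho}_1$ via the homotopy pushout (with the right map the identity), check that $L_{<1}\hookrightarrow L$ is a Moore $<1$-approximation, and observe that the product of this with the identity on the base is by definition a fiberwise truncation of a trivial bundle in the sense of \cite{BaC}.

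One small refinement: rather than writing $E_1 \simeq D\calL \cong L \times D^2$ and then separately identifying $\overline{\rho}_1 \cong D^2$, it is cleaner to note directly that $E_1 = hp(D\calL \leftarrow S\calL \rightarrow S\calL) \cong L \times hp(D\rho \leftarrow S\rho \rightarrow S\rho) = L \times \overline{\rho}_1$, since $\calL = L \times \rho$ by triviality and homotopy pushouts commute with products. This makes the bundle projection to $\overline{\rho}_1$ completely transparent and avoids having to argue that the projection is ``the natural one induced by the tubular projection.'' But this is cosmetic; your argument is sound as written.
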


\begin{clm}
Define $B^{AF} = cyl\left(L_{<1} \rightarrow p\right)$.
It is a subset of $cyl(L \rightarrow p) = B$.
Let $\calB^{AF}$ denote the restriction of $\calB \rightarrow B$ above subspace $B^{AF}$.
Then $T_1^{AF}$ from (i) of the AF construction is:
\begin{align*}
T_1^{AF} = hp\left(D\calB^{AF} \leftarrow S\calB^{AF} \rightarrow S\calB^{AF}\right).	
\end{align*}
It is a subset of $T_1$.
\end{clm}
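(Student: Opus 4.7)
The plan is to simplify both sides of the asserted equality to a common product description, relying on the contractibility of every base appearing. First, each of the three homotopy pushouts in sight has its rightward leg given by the identity map, so it canonically deformation retracts onto its leftward apex. This gives $hp(D\calB^{AF} \leftarrow S\calB^{AF} \rightarrow S\calB^{AF}) \simeq D\calB^{AF}$, and similarly $ft_{<1}E_1 \simeq D\calL_{<1}$ (via the preceding claim) and $\overline{\rho}_1 \simeq D\rho$. Substituting these into the defining formula $T_1^{AF} = cyl(ft_{<1}E_1 \to \overline{\rho}_1)$ reduces the whole statement to the identification $D\calB^{AF} \cong cyl(D\calL_{<1} \to D\rho)$, where the map is the disk-bundle projection induced by the fibre bundle map $L_{<1} \to p$.

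For this identification I would exploit that $B = cL$ is contractible, so the complex line bundle $\calB = \calX|_B$ is trivializable: fix $\calB \cong B \times \C$. Restricting along the inclusions $\{p\}, L, L_{<1}, B^{AF} \hookrightarrow B$ produces compatible trivializations $\rho \cong \C$, $\calL \cong L \times \C$, $\calL_{<1} \cong L_{<1} \times \C$, and $\calB^{AF} \cong B^{AF} \times \C$. Passing to disk bundles, and remembering $B^{AF} = cyl(L_{<1} \to p)$ by definition, the desired identification becomes the chain of equalities
\begin{align*}
cyl(D\calL_{<1} \to D\rho) = cyl(L_{<1} \times D^2 \to \{p\} \times D^2) = cyl(L_{<1} \to p) \times D^2 = B^{AF} \times D^2 = D\calB^{AF}.
\end{align*}
The subset assertion is then automatic: since $B^{AF} \subset B$ and $\calB^{AF}$ is the restriction of $\calB$, we get $D\calB^{AF} \subset D\calB$, and the latter is identified with $T_1$ via $T_1 = hp(D\calB \leftarrow S\calB \rightarrow S\calB) \simeq D\calB$.

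The main obstacle, such as it is, lies in bookkeeping: reconciling the a priori distinct viewpoints on $\calB^{AF}$, namely as a restriction of the line bundle $\calB$, as a bundle over the mapping cylinder $B^{AF}$, and as the total space of the mapping cylinder of $D\calL_{<1} \to D\rho$. Contractibility of $B$ collapses these viewpoints to the single trivialization above, and once it is in place the statement becomes tautological.
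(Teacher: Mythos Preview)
The paper explicitly omits a proof of this claim, stating just before it that ``the following claims [can] be verified in a straightforward manner.'' So there is no paper proof to compare against; your task was exactly to supply the straightforward verification, and your argument does this correctly. The key step---trivializing $\calB$ over the contractible base $B$ and thereby identifying $D\calB^{AF}$ with $cyl(D\calL_{<1} \to D\rho)$---is precisely what makes the claim go through.

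One small remark: you phrase most of the argument in terms of homotopy equivalences (collapsing each $hp(A \leftarrow B \xrightarrow{\id} B)$ onto $A$), whereas the claim asserts an actual identification and a literal inclusion $T_1^{AF} \subset T_1$. Your argument can be upgraded to honest homeomorphisms with no extra ideas: each such homotopy pushout \emph{is} the mapping cylinder $cyl(B \hookrightarrow A)$, and since mapping cylinders commute with each other (both are colimits over compatible diagrams), one gets $T_1^{AF} = cyl\bigl(cyl(S\calL_{<1} \to S\rho) \hookrightarrow cyl(D\calL_{<1} \to D\rho)\bigr) = cyl(S\calB^{AF} \hookrightarrow D\calB^{AF})$ on the nose, and the inclusion into $T_1$ is then the map of mapping cylinders induced by $S\calB^{AF} \subset S\calB$ and $D\calB^{AF} \subset D\calB$. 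This is a cosmetic strengthening; for the homological computations the paper actually performs, your homotopy-level statement already suffices.
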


\begin{clm}\label{step1}
Define $IX^{AF} = M \cup B^{AF}$. 
It is a subset of $X$ that is homotopy equivalent to a wedge of two circles.
Let $\calI\calX^{AF}$ denote the restriction of $\calX \rightarrow X$ above subspace $IX^{AF}$.
It is a trivial bundle because $IX^{AF}$ has vanishing second cohomology.
Then the Step 1 AF intersection space $I\overline{\calX}_{AF,1}$ is:
\begin{align*}
I\overline{\calX}_{AF,1} = hp\left(D\calI\calX^{AF} \leftarrow S\calI\calX^{AF} \rightarrow S\calI\calX^{AF}\right)	
\end{align*}
and $\partial T_0^{AF}$ from (iii) of the AF construction is:
\begin{align*}
\partial T_0^{AF} = S\calI\calX^{AF}.
\end{align*}
It is a subset of $\partial T_0$.
\end{clm}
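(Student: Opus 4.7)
The plan is to verify the four assertions of the claim (the homotopy type of $IX^{AF}$, triviality of $\calI\calX^{AF}$, the homotopy pushout description of $I\overline{\calX}_{AF,1}$, and the identification of $\partial T_0^{AF}$) independently.

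First, the homotopy type of $IX^{AF}$ follows from its pushout structure: $B^{AF} = cyl(L_{<1} \to p)$ is contractible and $L_{<1} \hookrightarrow B^{AF}$ is a cofibration, so $IX^{AF} = M \cup_{L_{<1}} B^{AF}$ deformation retracts onto $M/L_{<1}$. Using $M \cong S^1 \times D^1$ together with the fact that the two points comprising $L_{<1}$ lie on the two distinct boundary circles of this cylinder, the quotient has the homotopy type of $S^1$ with two distinct points identified, i.e.\ $S^1 \vee S^1$. Triviality of $\calI\calX^{AF}$ is then immediate: complex line bundles over a CW complex are classified by $H^2(-;\Z)$, which vanishes on $S^1 \vee S^1$.

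Next, I would expand each ingredient in the gluing $(\overline{\calX} - T^\circ) \cup_{ft_{<1}E_1} T_1^{AF}$ using its homotopy pushout presentation. Claim \hyp{hpdescriptions} supplies the description of $\overline{\calX} - T^\circ$; the analogous identity $T_1^{AF} = hp(D\calB^{AF} \leftarrow S\calB^{AF} \rightarrow S\calB^{AF})$ and the given pushout for $ft_{<1}E_1$ then exhibit the amalgamation as a disk-bundle pushout $D\calM \cup_{D\calL_{<1}} D\calB^{AF}$ carrying sphere-bundle collars. Since $\calM$, $\calB^{AF}$, and $\calL_{<1}$ are the restrictions of the common line bundle $\calI\calX^{AF}$ to $M$, $B^{AF}$, and $L_{<1} = M \cap B^{AF}$, this amalgamated disk bundle is $D\calI\calX^{AF}$, and the collar structure matches the claimed form.

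Finally, for $\partial T_0^{AF} = S\calI\calX^{AF}$: since all bundles in play are trivial, $ft_{<1}E_1|_{S\rho}$ consists of two disjoint copies of $S^1$ and is canonically identified with $S\calL_{<1}$, while $cyl(ft_{<1}E_1|_{S\rho} \to S\rho) = cyl(L_{<1} \to p) \times S^1 = B^{AF} \times S^1 = S\calB^{AF}$. Substituting into the definition of $\partial T_0^{AF}$ gives $S\calM \cup_{S\calL_{<1}} S\calB^{AF}$, which equals $S\calI\calX^{AF}$ by the pushout analysis of the previous step. The containment $\partial T_0^{AF} \subset \partial T_0 = S\calX$ is then immediate, since each piece is a sphere subbundle of $\calX$. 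The main obstacle I anticipate is the bookkeeping in the middle step: ensuring the gluing maps of the three homotopy pushouts are compatible so that one recovers $D\calI\calX^{AF}$ on the nose rather than merely up to homotopy, which comes down to invoking the explicit trivializations of $\calM$ and $\calB$ along $L_{<1}$.
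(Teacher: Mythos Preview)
The paper states this claim without proof, explicitly noting that it ``can be verified in a straightforward manner.'' Your proposal supplies exactly such a verification, and each of the four steps is sound: the collapse of the contractible $B^{AF}$ to identify $IX^{AF}\simeq S^1\vee S^1$, the classification of complex line bundles by $H^2$, the assembly of the three homotopy-pushout descriptions into the disk and sphere bundles over $IX^{AF}=M\cup_{L_{<1}}B^{AF}$, and the identification of $\partial T_0^{AF}$ via the trivializations. The bookkeeping concern you flag at the end is real but benign here, since every bundle in sight is the restriction of the single line bundle $\calX\to X$ to a subspace, so the gluings are literally restrictions rather than choices of trivialization; there is nothing to match up. One small point worth tightening: in the first step you pass from $M/L_{<1}$ to ``$S^1$ with two distinct points identified'' via the retraction $M\simeq S^1$, but that retraction could in principle send both marked points to the same image; it is cleaner to note that $IX^{AF}$ is the cylinder $M$ with an arc (namely $B^{AF}$) attached between the two boundary circles, and then collapse the arc together with a transverse arc in $M$ to see the wedge directly.
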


\begin{clm}\label{fiberwisepair}
The pair $\left(\partial T_0^{AF}, S\rho\right) = \left(S\calI\calX^{AF}, S\rho\right)$, interpreted as a pair of bundles over $\{\infty\}$, has fiberwise truncation:
\begin{align*}
\left(S\calI\calX^{AF}, S\rho\right)_{<2} = \left(IX^{AF},p\right) \hookrightarrow \left(S\calI\calX^{AF}, S\rho\right)	
\end{align*}
where the inclusion is any section of the trivial bundle pair $\left(S\calI\calX^{AF}, S\rho\right) \rightarrow \left(IX^{AF},p\right)$.	
\end{clm}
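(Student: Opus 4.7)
The plan is to reduce the claim to a computation of ordinary pair homology, exploiting the triviality of the base and of the relevant complex line bundle. By Claim \hyp{step1} the complex line bundle $\calI\calX^{AF} \to IX^{AF}$ is trivial, so there is a canonical homeomorphism $S\calI\calX^{AF} \cong IX^{AF} \times S^1$ under which $S\rho$ corresponds to $\{p\} \times S^1$. Since the base $\{\infty\}$ is a single point, the pair of ``bundles'' $(S\calI\calX^{AF}, S\rho) \to \{\infty\}$ is literally a product pair whose fiber is $(S\calI\calX^{AF}, S\rho)$ itself, and any section of the trivial bundle pair $(S\calI\calX^{AF}, S\rho) \to (IX^{AF}, p)$ (e.g.\ $x \mapsto (x, s_0)$ for a fixed basepoint $s_0 \in S^1$) furnishes a canonical inclusion of pairs $(IX^{AF}, p) \hookrightarrow (S\calI\calX^{AF}, S\rho)$.

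Next I would verify that this inclusion satisfies the defining homological property of an AF fiberwise truncation in degree $< 2$: when the base is a point, fiberwise truncation reduces to ordinary pair truncation, meaning the map of pairs must induce an isomorphism on relative homology in degrees $< 2$ and the source must have vanishing relative homology in degrees $\geq 2$. By Claim \hyp{step1}, $IX^{AF} \simeq S^1 \vee S^1$, so $H_*(IX^{AF}, p; \Q)$ is concentrated in degree $1$ where it equals $\Q^2$. For the target, the K\"unneth formula together with the long exact sequence of the pair $(IX^{AF} \times S^1, \{p\} \times S^1)$ gives $H_1 = \Q^2$ (the cokernel of $H_1(S^1) \to H_1(IX^{AF}) \oplus H_1(S^1)$) and $H_2 = \Q^2$ (from the $H_1(IX^{AF}) \otimes H_1(S^1)$ summand), with all other relative homology vanishing.

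I then read off that the section-induced map of pairs is an isomorphism on $H_0$ and $H_1$ (trivially in degree $0$, and in degree $1$ it identifies the two copies of $\Q^2$ via the K\"unneth summand coming from $IX^{AF}$) and that the source has trivial relative homology in degrees $\geq 2$. These are precisely the conditions required to be a degree-$<2$ fiberwise truncation of the pair over the point $\{\infty\}$.

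The main obstacle is simply matching the explicit inclusion with the formal AF definition of a fiberwise truncation \emph{of a pair of fiber bundles}; this is what would be genuinely delicate over a non-trivial base, but here the base is a single point, so there is no holonomy obstruction and the AF fiberwise truncation condition collapses to the ordinary pair-truncation computation carried out above.
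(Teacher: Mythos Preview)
Your proposal is correct; the paper explicitly omits a proof of this claim, stating only that it ``can be verified in a straightforward manner,'' and what you have written is precisely that straightforward verification. Since the base $\{\infty\}$ is a point, the AF fiberwise truncation condition for a pair reduces to an ordinary Moore-type truncation of the pair, and your K\"unneth/long-exact-sequence computation of $H_*(IX^{AF}\times S^1,\{p\}\times S^1;\Q)$ together with the identification of the section map on $H_1$ is exactly what is needed.
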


Because all our truncation are inclusions, the associated AF intersection space $I\overline{\calX}_{AF}$ is a subset of $\overline{\calX}$. 
We describe the pair $\left(I\overline{\calX}_{AF}, \overline{\rho}_{AF}\right)$:

\begin{clm}
Both $I\overline{\calX}_{AF}$ and $\overline{\rho}_{AF}$ are contractible.
Therefore $H_*\left(I\overline{\calX}_{AF}, \overline{\rho}_{AF}\right)$ vanishes identically.
\end{clm}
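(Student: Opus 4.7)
The plan is to establish contractibility of both the total space $I\overline{\calX}_{AF}$ and the subspace $\overline{\rho}_{AF}$, after which the vanishing of all relative homology groups follows at once from the long exact sequence of the pair. In each case, the key is to use the homotopy pushout descriptions from the preceding claims to recognize the space as the mapping cone of a homotopy equivalence (or, in the subspace case, as a whisker attachment).

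For $\overline{\rho}_{AF}$, I would first observe that $\overline{\rho}_1 = hp(D\rho \leftarrow S\rho \to S\rho)$ is, up to homotopy, the mapping cylinder of the inclusion $S\rho \hookrightarrow D\rho$, since the right leg is the identity; hence $\overline{\rho}_1$ deformation retracts onto $D\rho$. As $\rho$ is a complex line bundle over the point $p$, $D\rho$ is a 2-disk, so $\overline{\rho}_1$ is contractible. The attached piece $\overline{\rho}_0^{AF} = cyl(\{p\} \to \{\infty\}) = c\{p\}$ is a closed interval, glued to $\overline{\rho}_1$ at the single point $\{p\} \subset S\rho$ and free at the other end. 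Attaching such a whisker to a contractible space yields a contractible space, so $\overline{\rho}_{AF}$ is contractible.

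For $I\overline{\calX}_{AF}$ the same mechanism works, with one extra verification. By Claim \hyp{step1}, $I\overline{\calX}_{AF,1} = hp(D\calI\calX^{AF} \leftarrow S\calI\calX^{AF} \to S\calI\calX^{AF})$ retracts, again by the identity-on-the-right argument, onto $D\calI\calX^{AF} \simeq IX^{AF}$. The cone $T_0^{AF} = cIX^{AF}$ is attached along $IX^{AF} \hookrightarrow S\calI\calX^{AF} \hookrightarrow I\overline{\calX}_{AF,1}$, and the critical point is that this attaching map is a \emph{homotopy equivalence}: since $\calI\calX^{AF}$ is a trivial bundle, the inclusion $IX^{AF} \hookrightarrow S\calI\calX^{AF}$ is a section of the bundle projection, and composing with the retraction $I\overline{\calX}_{AF,1} \to D\calI\calX^{AF} \to IX^{AF}$ recovers the identity on $IX^{AF}$. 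Therefore $I\overline{\calX}_{AF}$ is the mapping cone of a homotopy equivalence (with $IX^{AF} \hookrightarrow I\overline{\calX}_{AF,1}$ a cofibration by CW-considerations), and so is contractible.

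No step here is technically hard; the main thing to watch is the unraveling of the various pushout descriptions and the verification that the relevant inclusion really does become the identity on $IX^{AF}$ after the deformation retraction. With both $I\overline{\calX}_{AF}$ and $\overline{\rho}_{AF}$ contractible, the long exact sequence of the pair forces $H_*(I\overline{\calX}_{AF}, \overline{\rho}_{AF}) = 0$ in every degree, completing the claim.
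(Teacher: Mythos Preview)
Your argument is correct and follows essentially the same route as the paper: identify each space as the mapping cone of an inclusion that is a deformation retract (equivalently, a homotopy equivalence), hence contractible. The paper phrases this compactly as $I\overline{\calX}_{AF} \cong c\left(IX^{AF} \hookrightarrow D\calI\calX^{AF}\right)$ and $\overline{\rho}_{AF} \cong c\left(p \hookrightarrow D\rho\right)$, whereas you first retract $I\overline{\calX}_{AF,1}$ onto $D\calI\calX^{AF}$ and then verify that the attaching section becomes the identity; these are the same argument unpacked to different depths.
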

\begin{proof}
By Claims \hyp{step1} and \hyp{fiberwisepair} and the AF construction, one verifies that the AF intersection space pair is described by the following mapping cones:
\begin{align*}
I\overline{\calX}_{AF} &\cong c\left(IX^{AF} \hookrightarrow D\calI\calX^{AF}\right)\\
\overline{\rho}_{AF} &\cong c\left(p \hookrightarrow D\rho\right) 
\end{align*}
where $(IX^{AF}, p) \hookrightarrow (D\calI\calX^{AF}, D\rho)$ is the inclusion of a section of the trivial bundle pair $\left(S\calI\calX^{AF}, S\rho\right) \rightarrow \left(IX^{AF}, p\right)$. 
But $IX^{AF} \subset D\calI\calX^{AF}$ and $p \subset D\rho$ are deformation retracts. 
Therefore $I\overline{\calX}_{AF}$ and $\overline{\rho}_{AF}$ are contractible.
\end{proof}

We have shown that $H_*\left(I\overline{\calX}_{AF}, \overline{\rho}_{AF}\right)$ vanishes identically. 
On the other hand, suppose we are given \textit{any} algebraic intersection approximation $(A_{\bullet}, f_{\bullet})$ for $T$ with coefficients in $\Q$, and associated algebraic intersection space $I_{f_\bullet}X$.
Then we have the following exact sequence:
\begin{align*}
H_1(A_\bullet) \rightarrow H_1(\partial T;\Q) \rightarrow H_1(I_{f_\bullet}X)	
\end{align*}
But $H_1(A_\bullet) \cong Z_1 = 0$ (see Proof of Claim \hyp{intapproxex}) and $H_1(\partial T;\Q) \cong \Q$ together imply $H_1(I_{f_\bullet}X) \neq 0$.
In other words, our notion of intersection space is distinct from the AF notion.
It seems difficult to compare them in general.

\section{Intersection Space Pairing}\label{IntersectionSpacePairing}

\subsection{Denotations and Assumptions}
Throughout Section \hyp{IntersectionSpacePairing} we let $k$ denote a field and $X$ a compact subvariety of a real analytic manifold with singular set $\Sigma$.
Assume $X$ admits pc tubular data and is oriented of \textit{even} dimension $2n$ (e.g.\ $X$ is complex and equidimensional). 
Let $T$ denote a pc tubular neighborhood of $\Sigma$.
Assume $X$ is a \textit{Witt space} and $(A_\bullet, f_\bullet)$ is a \textit{Witt} algebraic approximation for $T$ with coefficients in $k$ \textit{for which the local duality obstructions vanish}.
Recall that an approximation for a Witt space is said to be Witt if it is either a lower $\bar{m}$ or upper $\bar{n}$ middle perversity approximation, and that distinguishing between the two is unnecessary as the constructed objects are naturally isomorphic. 
We thus omit any perversity subscripts and superscripts (assuming them to be either $\bar{m}$ or $\bar{n}$, distinction unnecessary). 

We also use this subsection as a grand collection of \textit{names and properties of maps}.
We give names to the following natural maps, all of which sit inside exact sequences (see preceding sections to understand these sequences):
\begin{align*}
&H_\bullet(cf_\bullet) \xrightarrow{u_\bullet} H_\bullet(I_{f_\bullet}X) \xrightarrow{v_\bullet} H_\bullet(X-T^\circ, \partial T;k)\\
&H_\bullet(X-T^\circ;k) \xrightarrow{h_\bullet} H_\bullet(I_{f_\bullet}X) \xrightarrow {g_\bullet} H_{\bullet - 1}(A_\bullet)\\
&H_\bullet(\partial T;k) \xrightarrow{\iota_\bullet} H_\bullet(X-T^\circ;k) \xrightarrow{j_\bullet} H_\bullet(X-T^\circ,\partial T;k) \xrightarrow{\delta_\bullet} H_{\bullet-1}(\partial T;k)\\
&H_\bullet(\partial T;k) \xrightarrow{\ell_\bullet} H_\bullet(cf_\bullet) \xrightarrow{0} H_{\bullet - 1}(A_\bullet) \xrightarrow{f_{\bullet-1}} H_{\bullet - 1}(\partial T;k)
\end{align*}
where maps that sit in the same row are sequential in a long exact sequence.
Next we gather the relationships between these maps (all of which can be found in the proof of Theorem \hyp{algdual}):
\begin{align*}
&j_\bullet = h_\bullet v_\bullet\\
&0 = g_\bullet u_\bullet\\
&h_\bullet \iota_\bullet = u_\bullet \ell_\bullet\\
&f_{\bullet-1}g_\bullet = \delta_\bullet v_\bullet\\
&\mbox{$\ell_\bullet$ is surjective and $f_\bullet$ is injective.}
\end{align*}
We consider it allowable to use these properties \textit{without comment}.
The dual of a map, $u_\bullet$ for example, will be denoted by $u^\bullet$.
Lastly we name the duality isomorphisms:
\begin{align*}
&D_{f_\bullet}: H_\bullet(cf_\bullet) \xrightarrow{\simeq} H^{2n-1-\bullet}(A_\bullet)\\
&D_\partial: H_\bullet(\partial T;k) \xrightarrow{\simeq} H^{2n-1-\bullet}(\partial T;k)\\
&D_L: H_\bullet(X-T^\circ, \partial T;k) \xrightarrow{\simeq} H^{2n-\bullet}(X-T^\circ;k)\\ 
&D'_L: H_\bullet(X-T^\circ;k) \xrightarrow{\simeq} H^{2n-\bullet}(X-T^\circ, \partial T;k) 
\end{align*}
where ``$L$'' indicates Lefschetz duality.
For $\alpha \in H_\bullet(X-T^\circ;k)$ and $\beta \in H_{2n-\bullet}(X-T^\circ, \partial T;k)$ we have:
\begin{align*}
D_L'(\alpha)(\beta) = (-1)^{|\alpha||\beta|} D_L(\beta)(\alpha)
\end{align*}
since these duality isomorphisms (or more specifically their inverses) can be understood in terms of cup products, which are anti-commutative.
The duality isomorphisms are related to each other as follows (again these properties are allowable to use \textit{without comment}):
\begin{align*}
&D_{f_\bullet} \ell_\bullet = f^{2n-1-\bullet}D_\partial\\
&D_L' \iota_\bullet = \delta^{2n-\bullet} D_\partial\\
&D_L j_\bullet = j^{2n-\bullet} D_L'.
\end{align*} 
where the first is a direct consequence of the local duality obstructions vanishing, and the second two follow from commutativity of the duality isomorphism diagram relating the long exact sequence of the pair $(X-T^\circ, \partial T;k)$ in homology to the long exact sequence of the pair in cohomology. 

\subsection{Families of Sections}
We would like our duality isomorphisms on the intersection space to have some geometric significance, and to give us a meaningful intersection space pairing.
In this subsection, we describe how duality isomorphisms are selected.

Lemma \hyp{dualconstruction} gives us insight into the particular nature of a duality isomorphism $D_{IX}: H_\bullet(I_{f_\bullet}X) \rightarrow H^{2n-\bullet}(I_{f_\bullet}X)$.
Consider the commutative diagram of exact sequences from the previous section:
$$\begin{tikzcd}
\cdots \ar[r] & H^{2n-i-1}(A_\bullet) \ar[r, "g^{2n-i}"] & H^{2n-i}(I_{f_\bullet}X) \ar[r, "h^{2n-i}"] & H^{2n-i}(X-T^\circ;k) \ar[r] & \cdots\\
\cdots \ar[r] & H_{i}(cf_\bullet) \ar[u, "D_{f_\bullet}"] \ar[r, "u_i"] & H_{i}(I_{f_\bullet}X) \ar[r, "v_i"] & H_{i}(X-T^\circ, \partial T; k) \ar[u, "D_L"] \ar[r] & \cdots. 
\end{tikzcd}$$
where $g^{2n-i} = (g_{2n-i})^*$ and $h^{2n-i} = (h_{2n-i})^*$ are dual maps to the maps on homology.
By Lemma \hyp{dualconstruction}, the intersection space duality isomorphism  is constructed by selecting \bi{families of sections}:
\begin{align*}
&r^\bullet: im\, h^\bullet \rightarrow H^\bullet(I_{f_\bullet}X)\\
&s_\bullet: im\, v_\bullet \rightarrow H_\bullet(I_{f_\bullet}X)
\end{align*}
where we will utilize the identification $im\, h^{\bullet} = (coim\, h_\bullet)^*$ asserted by Lemma \hyp{dualmap} to write $r^\bullet = (r_\bullet)^*$ where $r_\bullet: H_\bullet(I_{f_\bullet X}) \rightarrow coim\, h_\bullet$ is a retraction of $h_\bullet: coim\, h_\bullet \hookrightarrow H_\bullet(I_{f_\bullet}X)$.
As in the proof of Lemma \hyp{dualconstruction} (and with indices shifted for the family of sections on cohomology) we thus have duality isomorphism:
\begin{align*}D_{IX} = D^{(r^\bullet, s_\bullet)}: H_\bullet(I_{f_\bullet}X) \xrightarrow{\simeq} H^{2n-\bullet}(I_{f_\bullet}X)\end{align*}
which is entirely described by:
\begin{align*}
&\alpha \in H_i(cf_\bullet) \implies D_{IX}(u_i\alpha) = g^{2n-i}D_{f_\bullet}\alpha\\
&\beta \in im\, v_i \implies D_{IX}(s_i\beta) = r^{2n-i}D_L\beta. 
\end{align*}
This will allow us to describe the \bi{intersection space pairing associated to $(r^\bullet, s_\bullet)$}, which is defined by:
\begin{align*}
(-,-)_{IX} = (-,-)^{(r^\bullet, s_\bullet)} : H_i(I_{f_\bullet}X) \times H_{2n-i}(I_{f_\bullet}X) \rightarrow k,~ (\alpha,\beta)_{IX}  = D_{IX}(\alpha)(\beta).
\end{align*}

\begin{lem}\label{intspacepair}
Suppose $(r^\bullet, s_\bullet)$ is a family of sections and $i \in \Z$. 
Consider the decompositions:
\begin{align*} 
&H_i(I_{f_\bullet} X) = im\, u_i \oplus im\, s_i\\
&H_{2n-i}(I_{f_\bullet} X) = im\, h_{2n-i} \oplus ker\, r_{2n-i}.  
\end{align*}
Under the intersection space pairing for $(r^\bullet, s_\bullet)$ we have:
\begin{align*}
\left(im\, u_i, im\, h_{2n-i}\right)_{IX} = 0,~ \left(im\, s_i, ker\, r_{2n-i}\right)_{IX} = 0.
\end{align*}
and:
\begin{align*}
&\alpha \in H_i(\partial T;k),~ \beta \in H_{2n-i}(I_{f_\bullet}X) \implies (u_i\ell_i\alpha, \beta)_{IX} = (-1)^{i}(v_{2n-i}\beta, \iota_i\alpha)_{L} \\
&\gamma \in im\, v_i,~ \delta \in H_{2n-i}(X- T^\circ;k) \implies (s_i\gamma, h_{2n-i}\delta)_{IX} = (\gamma, \delta)_L.     
\end{align*}
where $(-,-)_L$ denotes the intersection pairing associated to $D_L$, i.e.\ $(-,-)_L = D_L(-)(-)$.
\end{lem}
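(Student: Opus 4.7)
The plan is to apply the explicit descriptions of the intersection space duality on the two summands of the decomposition $H_i(I_{f_\bullet}X) = \im u_i \oplus \im s_i$, namely $D_{IX}(u_i\alpha) = g^{2n-i}D_{f_\bullet}(\alpha)$ and $D_{IX}(s_i\beta) = r^{2n-i}D_L(\beta)$, as supplied by Lemma \hyp{dualconstruction}, and to pair each against the corresponding summand of $H_{2n-i}(I_{f_\bullet}X) = \im h_{2n-i} \oplus \ker r_{2n-i}$. The two vanishing statements should then be essentially free. For $\alpha \in H_i(cf_\bullet)$ and $\delta \in H_{2n-i}(X-T^\circ;k)$, dualizing yields
\[
(u_i\alpha, h_{2n-i}\delta)_{IX} = g^{2n-i}D_{f_\bullet}(\alpha)(h_{2n-i}\delta) = D_{f_\bullet}(\alpha)(g_{2n-i}h_{2n-i}\delta),
\]
which vanishes by the exactness $g_{2n-i}\circ h_{2n-i}=0$. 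For $\mu \in \ker r_{2n-i}$, applying $r^{2n-i} = (r_{2n-i})^*$ (under the identification $\im h^{2n-i} = (\coim h_{2n-i})^*$ of Lemma \hyp{dualmap}) gives $(s_i\gamma, \mu)_{IX} = D_L(\gamma)(r_{2n-i}\mu) = 0$.

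For the evaluation identity $(s_i\gamma, h_{2n-i}\delta)_{IX} = (\gamma, \delta)_L$, the key observation is that for $\gamma \in \im v_i$ the image $D_L(\gamma)$ lies in $\im h^{2n-i}$: this is precisely one of the commutative squares appearing inside the proof of Lemma \hyp{dualconstruction}. Combined with the fact that $r_{2n-i}$ is a retraction of $h_{2n-i}\colon \coim h_{2n-i} \hookrightarrow H_{2n-i}(I_{f_\bullet}X)$, this yields
\[
r^{2n-i}D_L(\gamma)(h_{2n-i}\delta) = D_L(\gamma)(r_{2n-i}h_{2n-i}\delta) = D_L(\gamma)(\delta) = (\gamma, \delta)_L.
\]

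The identity $(u_i\ell_i\alpha, \beta)_{IX} = (-1)^i(v_{2n-i}\beta, \iota_i\alpha)_L$ is the technical heart of the lemma, and I would approach it by reducing both sides to the common expression $D_\partial(\alpha)\bigl(\delta_{2n-i}v_{2n-i}\beta\bigr)$. On the left, one chains together the formula for $D_{IX}$ on $\im u_i$, the consequence $D_{f_\bullet}\ell_i = f^{2n-1-i}D_\partial$ of vanishing local duality obstructions, and the octahedral-axiom relation $f_{2n-1-i}g_{2n-i} = \delta_{2n-i}v_{2n-i}$. On the right, one applies the anticommutativity $D_L(\eta')(\gamma') = (-1)^{|\gamma'||\eta'|}D_L'(\gamma')(\eta')$ and then the Lefschetz identity $D_L'\iota_i = \delta^{2n-i}D_\partial$, which through the dual description of $\delta^{2n-i}$ produces the same expression $D_\partial(\alpha)\bigl(\delta_{2n-i}v_{2n-i}\beta\bigr)$.

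The main obstacle, and truly the only delicate point, is the sign bookkeeping in that last reduction: the right-hand side picks up an overall factor of $(-1)^i \cdot (-1)^{i(2n-i)}$. Since $2n$ is even, $i(2n-i)$ and $i^2$ have the same parity modulo $2$, so this product equals $+1$ and both sides coincide. Everything else is a routine chase of commuting squares in the long exact sequences and duality diagrams already established.
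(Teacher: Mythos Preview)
Your proposal is correct and follows essentially the same route as the paper: both arguments unwind $D_{IX}$ on the two summands via $g^{2n-i}D_{f_\bullet}$ and $r^{2n-i}D_L$, use exactness $g_{2n-i}h_{2n-i}=0$ and the retraction property of $r_{2n-i}$ for the vanishing statements, and handle the $(u_i\ell_i\alpha,\beta)_{IX}$ identity by combining $D_{f_\bullet}\ell_i=f^{2n-1-i}D_\partial$, the octahedral relation $f_{\bullet-1}g_\bullet=\delta_\bullet v_\bullet$, the Lefschetz relation $D_L'\iota_i=\delta^{2n-i}D_\partial$, and the sign $(-1)^{i(2n-i)}=(-1)^i$. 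The only cosmetic difference is that the paper computes the left-hand side forward to reach the right-hand side, whereas you reduce both sides to the common expression $D_\partial(\alpha)(\delta_{2n-i}v_{2n-i}\beta)$; the underlying computation is identical.
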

\begin{proof}
Suppose $\alpha \in H_i(\partial T;k)$ and $\beta \in H_{2n-i}(I_{f_\bullet}X)$.
Then:
\begin{align*}
(u_i\ell_i\alpha, \beta)_{IX} = D_{IX}(u_i\ell_i\alpha)(\beta) = (g^{2n-i}D_{f_\bullet}\ell_i\alpha)(\beta) =  (D_{f_\bullet}\ell_i\alpha)(g_{2n-i}\beta).
\end{align*}
Since exactness implies $ker\, g_{2n-i} = im\, h_{2n-i}$, and $im\, u_i = im\, u_i\ell_i$, this proves that $(im\,u_i, im\, h_{2n-i})_{IX} = 0$.
But let's go further with our computation of $(u_i\ell_i\alpha,\beta)_{IX}$; it is equal to:
\begin{align*}
(g^{2n-i}D_{f_\bullet}\ell_i\alpha)(\beta) &= (g^{2n-i}f^{2n-i-1}D_\partial\alpha)(\beta) \\
&= (v^{2n-i}\delta^{2n-i}D_\partial\alpha)(\beta) = D_L'(\iota_i\alpha)(v_{2n-i}\beta)\\
&= (-1)^{i(2n-i)} D_L(v_{2n-i}\beta)(\iota_i\alpha) = (-1)^i(v_{2n-i}\beta, \alpha)_L.
\end{align*}

Next suppose $\gamma \in im\, v_i$ and $\delta \in H_{2n-i}(X-T^\circ;k)$.
Then:
\begin{align*}
(s_i\gamma, h_{2n-i}\delta)_{IX} = D_{IX}(s_i\gamma)(h_{2n-i}\delta) &= (r^{2n-i}D_L\gamma)(h_{2n-i}\delta)\\
&= (D_L \gamma)(r_{2n-i}h_{2n-i}\delta)\\
& = (D_L\gamma)(\delta) = (\gamma, \delta)_{L}.
\end{align*} 
Note that it makes sense to evaluate $D_L \gamma$ on $im\, r_{2n-i} = coim\, h_{2n-i}$, since $\gamma$ belonging to $im\, v_i$ implies $D_\partial \gamma$ belongs to:
\begin{align*} im\, h^{2n-i} = (coim\, h_{2n-i})^* = \{\phi \in H^{2n-i}(X-T^\circ): \phi(ker\, h_{2n-i}) = 0 \}\end{align*} 
In the computations at the beginning of this paragraph, if we replace $h_{2n-i}\delta$ with an element of $ker\, r_{2n-i}$, it easily follows that $(im\, s_i , ker\, r_{2n-i})_{IX} = 0$.  
\end{proof}  

Lemma \hyp{intspacepair} informs us how to carry out the intersection space pairing, but we will seek an even finer decomposition of the homology vector spaces.
We say a family of sections $(r^\bullet, s_\bullet)$ is \bi{untwisted} if and only if each of the following hold:
\begin{enumerate}[(i)]
\item $im\, s_\bullet j_\bullet \subset im\, h_\bullet$ (observe that $j_\bullet = v_\bullet h_\bullet$ implies $im\, j_\bullet \subset im\, v_\bullet$, so $s_\bullet j_\bullet$ is in fact well-defined).
\item $ker\, r_\bullet \subset im\, s_\bullet$ and $(ker\, r_\bullet, im\, s_{2n-\bullet} j_{2n-\bullet})_{IX} = 0$.
\end{enumerate}
We will prove the existence of an untwisted family of sections, but first must establish a technical lemma.

\begin{lem}\label{Q}
Suppose $s_\bullet$ satisfies property (i) of untwisted, and $Q_\bullet$ is the maximal subspace of $im\, v_\bullet$ satisfying the vanishing:
\begin{align*}
\left(Q_\bullet, h_{2n-\bullet}^{-1}(im\, s_{2n-\bullet} j_{2n-\bullet})\right)_L = 0. 
\end{align*} 
Then $im\, v_\bullet = Q_\bullet \oplus im\, j_\bullet$.
\end{lem}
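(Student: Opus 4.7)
The plan is to use the boundary map $\delta_\bullet$ of the pair $(X-T^\circ, \partial T)$ to identify $im\, v_\bullet$. The octahedral relation labelled $(1)$ in the proof of Theorem \hyp{algdual} identifies the boundary map of the exact triangle $cf_\bullet \to I_{f_\bullet}X \to C_\bullet(X-T^\circ, \partial T;k)$ with the composition $\ell_{\bullet-1} \circ \delta_\bullet$. Since $\ell_\bullet$ is surjective with kernel $F_\bullet := f_*(H_\bullet(A_\bullet))$, one obtains $im\, v_\bullet = \delta_\bullet^{-1}(F_{\bullet-1})$, while evidently $im\, j_\bullet = ker\, \delta_\bullet$. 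The resulting short exact sequence $0 \to im\, j_\bullet \to im\, v_\bullet \xrightarrow{\delta_\bullet} F_{\bullet-1} \to 0$ reduces the lemma to showing that $\delta_\bullet$ restricts to an isomorphism $Q_\bullet \xrightarrow{\simeq} F_{\bullet-1}$.

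For injectivity, equivalently $Q_\bullet \cap im\, j_\bullet = 0$: write $q = j_i(\alpha) \in Q_i$. The relation $D_L j_\bullet = j^{2n-\bullet} D_L'$ turns $(q, p)_L$ into $D_L'(\alpha)(j_{2n-i}(p))$, so $q \in Q_i$ forces $D_L'(\alpha)$ to annihilate $j_{2n-i}(P_i)$. A direct linear-algebra check shows $j_{2n-i}(P_i) = im\, j_{2n-i}$: the disjointness $im\, s_\bullet j_\bullet \cap im\, u_\bullet = 0$ (a consequence of $v_\bullet s_\bullet j_\bullet = j_\bullet$) combined with $h_\bullet \iota_\bullet = u_\bullet \ell_\bullet$ and surjectivity of $\ell_\bullet$ permits adjusting any preimage $p_0$ of a class in $im\, j_{2n-i}$ by some $\iota_{2n-i}(\eta)$ to land in $P_i$. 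Then $j^{2n-i}(D_L'(\alpha)) = 0$, so $D_L'(\alpha) \in ker\, j^{2n-i} = im\, \delta^{2n-i}$ by the cohomology long exact sequence; combined with $D_L' \iota_i = \delta^{2n-i} D_\partial$ and the fact that $D_L'$ and $D_\partial$ are isomorphisms, this forces $\alpha \in im\, \iota_i$ and hence $q = 0$.

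For surjectivity of $\delta_\bullet|_{Q_\bullet}$, fix $\xi \in F_{\bullet-1}$ and choose some $q_0 \in im\, v_\bullet$ with $\delta_\bullet(q_0) = \xi$. Seek $\gamma \in H_\bullet(X-T^\circ;k)$ such that $q_0 - j_\bullet(\gamma) \in Q_\bullet$, noting that the adjustment preserves the $\delta_\bullet$-image because $\delta_\bullet j_\bullet = 0$. This amounts to arranging the functional $p \mapsto (q_0, p)_L$ on $P_\bullet$ to agree with $p \mapsto D_L'(\gamma)(j_{2n-\bullet}(p))$. Since $D_L'$ is surjective and any linear functional on the subspace $j_{2n-\bullet}(P_\bullet) = im\, j_{2n-\bullet} \subset H_{2n-\bullet}(X-T^\circ, \partial T;k)$ extends to the whole space, such $\gamma$ exists provided $(q_0, p)_L = 0$ for every $p \in P_\bullet \cap ker\, j_{2n-\bullet}$. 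The same disjointness argument identifies $P_\bullet \cap im\, \iota_{2n-\bullet} = \iota_{2n-\bullet}(F_{2n-\bullet}) = ker\, h_{2n-\bullet}$, and the relation $D_L' \iota_\bullet = \delta^{2n-\bullet} D_\partial$ reduces $(q_0, \iota_{2n-\bullet}(\eta))_L$ to a scalar multiple of the $\partial T$-intersection pairing $(\eta, \delta_\bullet(q_0))_\partial = (\eta, \xi)_\partial$ for $\eta \in F_{2n-\bullet}$.

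The crucial input is now the Section \hyp{IntersectionSpacePairing} hypothesis that the local duality obstructions vanish: by Proposition \hyp{intpairing} this forces $(\eta, \xi)_\partial = 0$ whenever $\eta \in F_{2n-\bullet}$ and $\xi \in F_{\bullet-1}$, so the necessary vanishing holds and the required $\gamma$ can be produced. Combining injectivity and surjectivity, $\delta_\bullet|_{Q_\bullet}$ is an isomorphism onto $F_{\bullet-1}$, which together with $Q_\bullet \cap im\, j_\bullet = 0$ produces the splitting $im\, v_\bullet = Q_\bullet \oplus im\, j_\bullet$. The principal technical hurdle is the book-keeping behind the two identifications $j_{2n-i}(P_i) = im\, j_{2n-i}$ and $P_\bullet \cap im\, \iota_{2n-\bullet} = \iota_{2n-\bullet}(F_{2n-\bullet})$; once these are in place, everything reduces to the $\partial T$-intersection pairing on $F_\bullet$, and vanishing of the local duality obstructions is exactly the input one needs to close the argument.
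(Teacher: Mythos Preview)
Your argument is essentially correct and takes a genuinely different route from the paper's.  There is one imprecision worth flagging: the short exact sequence you write as
\[
0 \to im\, j_\bullet \to im\, v_\bullet \xrightarrow{\delta_\bullet} F_{\bullet-1} \to 0
\]
has right-hand term $F_{\bullet-1}\cap im\,\delta_\bullet$, not $F_{\bullet-1}$ in general; from $im\,v_\bullet=\delta_\bullet^{-1}(F_{\bullet-1})$ one only gets $\delta_\bullet(im\,v_\bullet)=F_{\bullet-1}\cap im\,\delta_\bullet$, and there is no reason for $F_{\bullet-1}$ to lie in $im\,\delta_\bullet=ker\,\iota_{\bullet-1}$.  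This does not damage the proof: your surjectivity step really only needs $\xi\in\delta_\bullet(im\,v_\bullet)$ to choose $q_0$, and together with injectivity this yields that $Q_\bullet$ is a complement to $im\,j_\bullet$ inside $im\,v_\bullet$, which is the lemma.

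The paper instead proves $Q_i\cap im\,j_i=0$ by the same computation you give (in the direct form $D_Lj_i\alpha=j^{2n-i}D_L'\alpha=0$), and then finishes by a dimension count: using $D_L(im\,j_i)=im\,j^{2n-i}$ and the consequence $D_L(im\,v_i)=im\,h^{2n-i}$ of vanishing local duality obstructions, it shows $\dim Q_i\ge\dim im\,v_i-\dim im\,j_i$.  Your approach replaces that dimension count with an explicit construction of the complementary element $q_0-j_\bullet(\gamma)$, tracing the obstruction down to the $\partial T$-pairing $(F_{2n-\bullet},F_{\bullet-1})_\partial$ and invoking Proposition~\ref{intpairing} directly.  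The paper's route is shorter; yours is more constructive and makes the role of Proposition~\ref{intpairing} more transparent, at the cost of the bookkeeping for the identifications $j_{2n-i}(P_i)=im\,j_{2n-i}$ and $P_\bullet\cap im\,\iota_{2n-\bullet}=\iota_{2n-\bullet}(F_{2n-\bullet})=ker\,h_{2n-\bullet}$ (both of which you handle correctly).
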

\begin{proof}
Fix $i \in \Z$.
First we check that $Q_i$ has trivial intersection with $im\, j_i$.
Suppose $j_i\alpha \in im\, j_i \cap Q_i$.
Then:
\begin{align*}
0 = \left(j_i\alpha, h_{2n-i}^{-1}(im\, s_{2n-i} j_{2n-i})\right)_L &= (D_Lj_i\alpha)\left(h_{2n-i}^{-1}(im\, s_{2n-i} j_{2n-i})\right)\\
&= (j^{2n-i}D_{L}'\alpha)\left(h_{2n-i}^{-1}(im\, s_{2n-i} j_{2n-i})\right).
\end{align*}
Since $j^{2n-i} = h^{2n-i}v^{2n-i}$ and $im\, s_{2n-i}j_{2n-i} \subset im\, h_{2n-i}$, this sequence of equalities continues into:
\begin{align*}
0 = (D_{L}'\alpha)(v_{2n-i}im\, s_{2n-i}j_{2n-i}) = (D_{L}'\alpha)(im\, j_{2n-i}) &\implies j^{2n-i}D'_L\alpha = 0\\
&\implies D_Lj_i\alpha = 0\\
&\implies j_i\alpha = 0.
\end{align*}

Having shown the intersection is trivial, we use dimension counting to complete the proof.
The duality isomorphism of long exact sequences of the pair $(X-T^\circ, \partial T)$ implies $D_L(im\, j_i) = im\, j^{2n-i}$.
Vanishing of the local duality obstruction implies $D_L(im\, v_i) = im\, h^{2n-i}$ (see the proof of Lemma \hyp{dualconstruction}); in particular this implies $(im\, v_i, ker\, h_{2n-i})_L = 0$. 
By construction, and the fact that $(im\, v_i, ker\, h_{2n-i})_L = 0$, the vector space $Q_i$ has dimension greater than or equal to the difference:
\begin{align*}
dim_k\, &im\, v_i - dim_k\, h_{2n-i}^{-1}(im\, s_{2n-i} j_{2n-i}) + dim_k\, ker\, h_{2n-i}\\
&= dim_k\, im\, v_i - dim_k\, im\, j_{2n-i}\\
&= dim_k\, im\, v_i - dim_k\, im\, j_i
\end{align*} 
Rearranged, this is:
\begin{align*}
dim_k\, Q_i + dim_k\, im\, j_i \geq dim_k\, im\, v_i,
\end{align*}
completing the proof.
\end{proof}

\begin{prop}\label{untwistedexist}
There always exists an untwisted family of sections $(r^\bullet, s_\bullet)$.
\end{prop}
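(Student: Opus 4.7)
The plan is to construct $s_\bullet$ and $r_\bullet$ in tandem, taking $\ker r_\bullet$ to be $s_\bullet(Q_\bullet)$ where $Q_\bullet$ is the complement of $\im j_\bullet$ in $\im v_\bullet$ supplied by Lemma \hyp{Q}. The strategy depends on the observation that once $s_\bullet$ is pinned down on $\im j_\bullet$, the subspace $Q_\bullet$ is determined without reference to the rest of $s_\bullet$, so no circularity arises.

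I would begin by fixing, for each degree $i$, a section $\sigma_i \colon \im j_i \to \im h_i$ of the surjection $v_i|_{\im h_i}$ (surjective because $j_i = v_i h_i$). I would then define $P_\bullet := h_{2n-\bullet}^{-1}(\sigma_{2n-\bullet}(\im j_{2n-\bullet}))$ and let $Q_\bullet \subset \im v_\bullet$ be the maximal subspace with $(Q_\bullet, P_\bullet)_L = 0$. Any provisional section $\tilde{s}_\bullet$ of $v_\bullet$ extending $\sigma_\bullet$ on $\im j_\bullet$ verifies the hypothesis of Lemma \hyp{Q}, and since $\im \tilde{s}_{2n-\bullet} j_{2n-\bullet} = \sigma_{2n-\bullet}(\im j_{2n-\bullet})$, the conclusion gives $\im v_\bullet = \im j_\bullet \oplus Q_\bullet$ (depending only on $\sigma_{2n-\bullet}$). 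I would define the final $s_\bullet$ on this decomposition by $s_\bullet|_{\im j_\bullet} = \sigma_\bullet$ and any chosen lift of $v_\bullet$ on $Q_\bullet$. Property (i) of untwisted then holds because $\im s_\bullet j_\bullet = \sigma_\bullet(\im j_\bullet) \subset \im h_\bullet$.

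For the retraction $r_\bullet$, I would set $K_\bullet := s_\bullet(Q_\bullet)$ and let $r_\bullet$ be the projection onto $\coim h_\bullet$ with kernel $K_\bullet$. Before using this, I would verify that $H_\bullet(I_{f_\bullet}X) = \im h_\bullet \oplus K_\bullet$, using: $\im u_\bullet \subset \im h_\bullet$ (from $u_\bullet \ell_\bullet = h_\bullet \iota_\bullet$ and surjectivity of $\ell_\bullet$); the inherent decomposition $H_\bullet(I_{f_\bullet}X) = \im u_\bullet \oplus \im s_\bullet$; and the refinement $\im h_\bullet = \im u_\bullet \oplus s_\bullet(\im j_\bullet)$, obtained by writing each $h_\bullet(\alpha) = (h_\bullet(\alpha) - \sigma_\bullet j_\bullet(\alpha)) + \sigma_\bullet j_\bullet(\alpha)$ with the first term forced into $\ker v_\bullet = \im u_\bullet$. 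Since $\im s_\bullet = s_\bullet(\im j_\bullet) \oplus s_\bullet(Q_\bullet)$, this gives the required direct sum.

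Finally I would verify the two clauses of (ii). The inclusion $\ker r_\bullet = s_\bullet(Q_\bullet) \subset \im s_\bullet$ is immediate. For the vanishing $(\ker r_\bullet, \im s_{2n-\bullet} j_{2n-\bullet})_{IX} = 0$, a general element of $\im s_{2n-\bullet} j_{2n-\bullet} = \sigma_{2n-\bullet}(\im j_{2n-\bullet})$ has the form $h_{2n-\bullet}(\delta)$ with $\delta \in P_\bullet$; Lemma \hyp{intspacepair} then gives $(s_\bullet \gamma, h_{2n-\bullet} \delta)_{IX} = (\gamma, \delta)_L$, which vanishes by the defining annihilation property of $Q_\bullet$. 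The main obstacle is managing the apparent circularity between $s_\bullet$ and $Q_\bullet$; this is dispelled by the observation, emphasized at the outset, that $Q_\bullet$ depends only on the chosen $\sigma_{2n-\bullet}$.
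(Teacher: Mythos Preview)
Your proof is correct and follows essentially the same route as the paper: choose $s_\bullet$ so that $s_\bullet(\im j_\bullet)\subset \im h_\bullet$, invoke Lemma~\ref{Q} to obtain $Q_\bullet$, and take $\ker r_\bullet$ inside $s_\bullet(Q_\bullet)$, verifying property~(ii) via Lemma~\ref{intspacepair}. The only cosmetic difference is that you fix $\sigma_\bullet=s_\bullet|_{\im j_\bullet}$ first to stress that $Q_\bullet$ depends only on this restriction (the paper simply chooses all of $s_\bullet$ up front, which already avoids circularity), and you take $\ker r_\bullet = s_\bullet(Q_\bullet)$ exactly by establishing the direct sum $H_\bullet(I_{f_\bullet}X)=\im h_\bullet\oplus s_\bullet(Q_\bullet)$, whereas the paper only needs the sum $s_\bullet Q_\bullet+\im h_\bullet=H_\bullet(I_{f_\bullet}X)$ and lets $\ker r_\bullet\subset s_\bullet(Q_\bullet)$.
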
 
\begin{proof} 
First let's check that there exist sections $s_\bullet: im\, v_\bullet \rightarrow H_\bullet(I_{f_\bullet}X)$ of $v_\bullet$ such that $im\, s_\bullet j_\bullet \subset im\, h_\bullet$.
This is possible iff $v_\bullet$ maps $im\, h_\bullet$ \textit{onto} $im\, j_\bullet$ (because then we can construct a restricted section $s_\bullet|: im\, j_\bullet \rightarrow im\, h_\bullet \subset H_\bullet(I_{f_\bullet}X)$ which by choice of a basis for $im\, v_\bullet$ can be extended to a full section $s_\bullet$). 
But $im\, j_\bullet = im\, v_\bullet h_\bullet$, so $v_\bullet$ indeed maps $im\, h_\bullet$ surjectively onto $im\, j_\bullet$.   

Given these sections $s_\bullet$ satisfying (i), we next verify that there exist retractions $r_\bullet : H_\bullet(I_{f_\bullet}X) \rightarrow coim\, h_\bullet$ satisfying (ii).
Let $Q_\bullet$ be as in Lemma \hyp{Q}.
We will first show that $r_\bullet$ can be selected so that $ker\, r_\bullet \subset s_\bullet Q_\bullet$; this is possible iff $s_\bullet Q_\bullet + im\, h_\bullet = H_\bullet(I_{f_\bullet}X)$. 
We already know from Lemma \hyp{intspacepair} that $im\, s_\bullet + im\, u_\bullet = H_\bullet(I_{f_\bullet}X)$.
But $g_\bullet u_\bullet = 0$ implies $im\, u_\bullet \subset ker\, g_\bullet = im\, h_\bullet$, were we have used exactness in the last step.
So:
\begin{align*}
im\, s_\bullet + im\, h_\bullet = H_\bullet(I_{f_\bullet}X).
\end{align*}
If we can show $im\, s_\bullet = s_\bullet Q_\bullet + im\, s_\bullet j_\bullet$
then we will be done with selecting our $r_\bullet$, since $im\, s_\bullet j_\bullet \subset im\, h_\bullet$ by property (i) of being untwisted.
But this follows by applying $s_\bullet$ to the equality of Lemma \hyp{Q}.

It remains to verify that $ker\, r_\bullet \subset s_\bullet Q_\bullet$ satisfies property (ii).
This is clear, because Lemma \hyp{intspacepair} and the definition of $Q_\bullet$ imply:
\begin{align*}
(s_\bullet Q_\bullet, im\, s_{2n-\bullet}j_{2n-\bullet})_{IX} = \left(Q_\bullet, h_{2n-\bullet}^{-1}(im\, s_{2n-\bullet}j_{2n-\bullet})\right)_L = 0.
\end{align*}
\end{proof}

\setcounter{equation}{0}

\begin{lem}\label{untwisted}
Suppose $(r^\bullet, s_\bullet)$ is an untwisted family of sections and $i \in \Z$.
Then there exist further decompositions:
\begin{align*}
&im\, h_i = im\, u_i \oplus im\, s_ij_i\\
&im\, s_i = ker\, r_i \oplus im\, s_ij_i.
\end{align*}
\end{lem}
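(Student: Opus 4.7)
The plan is to deduce both decompositions from the two standard splittings of $H_i(I_{f_\bullet}X)$ established in Lemma~\hyp{intspacepair}, combined with the definition of untwistedness and the identity $v_i h_i = j_i$. The argument is essentially linear algebra; the only substantive step is an application of the section property of $s_i$.

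First I would establish the decomposition $im\, h_i = im\, u_i \oplus im\, s_ij_i$. Both summands sit inside $im\, h_i$: we have $im\, u_i \subset ker\, g_i = im\, h_i$ by exactness of the sequence $H_i(X-T^\circ;k) \xrightarrow{h_i} H_i(I_{f_\bullet}X) \xrightarrow{g_i} H_{i-1}(A_\bullet)$ together with the relation $g_i u_i = 0$, while $im\, s_ij_i \subset im\, h_i$ is exactly property (i) of untwistedness. The intersection $im\, u_i \cap im\, s_ij_i$ is contained in $im\, u_i \cap im\, s_i = 0$ by the splitting $H_i(I_{f_\bullet}X) = im\, u_i \oplus im\, s_i$. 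To see that the sum exhausts $im\, h_i$, take $x \in im\, h_i$ and decompose it as $x = u_i a + s_i b$ via that splitting; applying $v_i$ and using $v_i u_i = 0$, $v_i s_i = id_{im\, v_i}$, and $v_i h_i = j_i$, I conclude that $b = v_i x \in v_i(im\, h_i) = im\, j_i$, so $b = j_i y$ for some $y$, and therefore $x = u_i a + s_i j_i y \in im\, u_i + im\, s_ij_i$.

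Next I would address $im\, s_i = ker\, r_i \oplus im\, s_ij_i$. The inclusion $ker\, r_i \subset im\, s_i$ is the first clause of property (ii) of untwistedness, and $im\, s_ij_i \subset im\, s_i$ is immediate, so the right-hand side is a subspace of $im\, s_i$. Their intersection lies in $ker\, r_i \cap im\, h_i$ (using property (i)), which vanishes by the splitting $H_i(I_{f_\bullet}X) = im\, h_i \oplus ker\, r_i$ of Lemma~\hyp{intspacepair}. To finish I use a dimension count: the first decomposition gives $\dim_k im\, h_i = \dim_k im\, u_i + \dim_k im\, s_ij_i$, while the two splittings of $H_i(I_{f_\bullet}X)$ give $\dim_k im\, u_i + \dim_k im\, s_i = \dim_k H_i(I_{f_\bullet}X) = \dim_k im\, h_i + \dim_k ker\, r_i$. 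Subtracting yields $\dim_k im\, s_i = \dim_k ker\, r_i + \dim_k im\, s_ij_i$, which forces the claimed equality.

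The only step that requires more than pure definition-chasing is the surjectivity argument in the first part, where one must recognize that the section property of $s_i$ combined with $v_i h_i = j_i$ is precisely what converts the $im\, s_i$-component of an element of $im\, h_i$ into the form $s_ij_iy$. Notably, only the first clause of property (ii) is used here; the orthogonality clause $(ker\, r_\bullet, im\, s_{2n-\bullet}j_{2n-\bullet})_{IX} = 0$ plays no role and will presumably only be needed in the signature computation that follows.
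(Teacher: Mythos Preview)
Your proof is correct and follows essentially the same approach as the paper. The only difference is in the first decomposition: the paper establishes $im\, h_i = im\, u_i \oplus im\, s_ij_i$ by a rank--nullity dimension count ($rk\, j_i = rk\, v_ih_i \geq rk\, h_i - nul\, v_i$), whereas you give a direct element-chasing argument using $v_iu_i = 0$ and $v_is_i = \id$ to force the $im\, s_i$-component of any element of $im\, h_i$ into $im\, s_ij_i$; your argument for the second decomposition is identical to the paper's, and your observation that only the first clause of property (ii) is used is accurate.
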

\begin{proof}
Let's begin with the decomposition of $im\, h_i$.
First we verify $im\, u_i \subset im\, h_i$: we have $g_iu_i = 0$, so $im\, u_i \subset ker\, g_i = im\, h_i$ where we have used exactness in the last equality.
Next, we know from Lemma \hyp{intspacepair} that $im\, s_ij_i$ has trivial intersection with $im\, u_i$.
Finally, we count dimension (using rank and nullity of maps):
\begin{align*}
rk\, j_i = rk\, v_ih_i \geq rk\, h_i - nul\, v_i &\implies rk\, h_i \leq nul\, v_i + rk\, j_i\\
&\implies dim_k\, im\, h_i \leq dim_k\, ker\, v_i + dim_k\, im\, j_i \\
&\implies dim_k\, im\, h_i \leq dim_k\, im\, u_i + dim_k\, im\, s_ij_i  
\end{align*}
where in the last step we have used exactness of a long exact sequence and injectivity of $s_i$.

Now let's approach the decomposition of $im\, s_i$.
First we verify $ker\, r_i \cap im\, s_ij_i = \langle 0 \rangle $: because the family of sections is untwisted, we know $im\, s_ij_i \subset im\, h_i$, but Lemma \hyp{intspacepair} implies $im\, h_i \cap ker\, r_i = \langle 0 \rangle$.
Finally for (2), we again count dimension:
\begin{align*}
dim_k\, im\, u_i + dim_k\, im\, s_i &= dim_k\, H_i(I_{f_\bullet}X) \\
&= dim_k\, im\, h_i + dim_k\, ker\, r_i\\
&= dim_k\, im\, u_i + dim_k\, im\, s_ij_i  + dim_k\, ker\, r_i .
\end{align*}
where we have used Lemma \hyp{intspacepair} for the first two inequalities, and (1) of this Lemma for the last.
We then obtain:
\begin{align*}
dim_k\, im\, s_i = dim_k\, im\, s_ij_i  + dim_k\, ker\, r_i
\end{align*} 
as desired.
\end{proof}

\subsection{Signature}
Suppose throughout this subsection that $(r^\bullet, s_\bullet)$ is a family of sections.
We would like to prove that, in the case that $n$ is even, an untwisted family of sections $(r^\bullet, s_\bullet)$ induces a symmetric pairing:
\begin{align*}
(-,-)_{IX}: H_n(I_{f_\bullet}X) \times H_n(I_{f_\bullet}X) \rightarrow k.
\end{align*} 
But first let's compute the signature  when $k = \Q$ if we \textit{assume} the induced pairing $(-,-)_{IX}$ is symmetric.
We will do so by comparing to the already existing symmetric pairing on $im\, j_n$:
\begin{align*}
(-,-)_{j}: im\, j_n \times im\, j_n \rightarrow \Q,~ (j_n\alpha, j_n\beta)_{j} = (j_n\alpha, \beta)_{L}  = (j_n\beta, \alpha)_{L};
\end{align*}
this signature is called the \bi{Novikov signature}, which is known (for example, \cite{S}) to equal the signature of the pairing:
\begin{align*}
IH_n(\widehat{X-T^\circ}; \Q) \times IH_n(\widehat{X-T^\circ}; \Q) \rightarrow \Q
\end{align*} 
where $\widehat{X-T^\circ}$ is the space $(X-T^\circ) \cup_{T} cone(T)$. 

\begin{thm}\label{sign}
Suppose $n$ is even, $k = \Q$, and $(r^\bullet, s_\bullet)$ is an untwisted family of sections that induces a symmetric intersection space pairing:
\begin{align*}
(-,-)_{IX}: H_n(I_{f_\bullet}X) \times H_n(I_{f_\bullet}X) \rightarrow \Q.
\end{align*}
Then the signature of $(-,-)_{IX}$ is equal to the Novikov signature.
\end{thm}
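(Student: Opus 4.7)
The plan is to leverage the untwisted family $(r^\bullet, s_\bullet)$ to split $H_n(I_{f_\bullet}X)$ into an orthogonal sum of a hyperbolic piece (signature zero) and a piece isometric to $(im\, j_n, (-,-)_j)$. First, I would combine Lemma \hyp{intspacepair} ($H_n(I_{f_\bullet}X) = im\, u_n \oplus im\, s_n$) with Lemma \hyp{untwisted} ($im\, s_n = ker\, r_n \oplus im\, s_n j_n$) to obtain
\begin{align*}
H_n(I_{f_\bullet}X) = im\, u_n \oplus ker\, r_n \oplus im\, s_n j_n.
\end{align*}

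Second, I would verify that with respect to this decomposition the Gram matrix of $(-,-)_{IX}$ is block diagonal, with a $2 \times 2$ anti-diagonal block on $im\, u_n \oplus ker\, r_n$ and a single block on $im\, s_n j_n$. Concretely: $(im\, u_n, im\, u_n)_{IX}$ and $(im\, u_n, im\, s_n j_n)_{IX}$ vanish because $im\, u_n, im\, s_n j_n \subset im\, h_n$ and $(im\, u_n, im\, h_n)_{IX} = 0$ by Lemma \hyp{intspacepair}; $(ker\, r_n, ker\, r_n)_{IX}$ vanishes because $ker\, r_n \subset im\, s_n$ and $(im\, s_n, ker\, r_n)_{IX} = 0$; and $(ker\, r_n, im\, s_n j_n)_{IX} = 0$ is precisely part (ii) of the untwisted hypothesis. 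The assumed symmetry of $(-,-)_{IX}$ gives the transposed vanishings. Since $(-,-)_{IX}$ is globally nondegenerate (it arises from $D_{IX}$), the induced pairing $im\, u_n \times ker\, r_n \to \Q$ must be perfect, so this block is a hyperbolic symmetric form and contributes signature zero.

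Third, I would identify the restriction of $(-,-)_{IX}$ to $im\, s_n j_n$ with $(-,-)_j$. Given $\delta, \epsilon \in H_n(X - T^\circ; \Q)$, property (i) of untwisted ensures we can pick $\delta'$ with $h_n \delta' = s_n j_n \epsilon$; applying $v_n$ gives $j_n \delta' = j_n \epsilon$, so by exactness of the long exact sequence of the pair $(X - T^\circ, \partial T)$ there is $\omega \in H_n(\partial T; \Q)$ with $\delta' - \epsilon = \iota_n \omega$. Then Lemma \hyp{intspacepair} gives
\begin{align*}
(s_n j_n \delta, s_n j_n \epsilon)_{IX} = (s_n j_n \delta, h_n \delta')_{IX} = (j_n \delta, \delta')_L = (j_n \delta, \epsilon)_L + (j_n \delta, \iota_n \omega)_L,
\end{align*}
and the second summand vanishes since $(j_n \delta, \iota_n \omega)_L = j^n D_L'(\delta)(\iota_n \omega) = D_L'(\delta)(j_n \iota_n \omega) = 0$, using $D_L j_n = j^n D_L'$ and $j_n \iota_n = 0$. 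Hence $s_n j_n$ is an isometry onto $(im\, j_n, (-,-)_j)$, and summing the signatures of the two blocks gives $\sigma((-,-)_{IX}) = \sigma((-,-)_j)$, the Novikov signature.

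The step I expect to be the main obstacle is this final computation: one must track the ambiguity in the choice of $\delta'$ and verify that the resulting error term vanishes via the Lefschetz-duality compatibility $D_L j_n = j^n D_L'$ together with $j_n \iota_n = 0$. Once the three-fold decomposition is in place, the hyperbolic identification and the block-diagonal structure are essentially formal consequences of Lemmas \hyp{intspacepair} and \hyp{untwisted} together with the assumed symmetry and nondegeneracy of $(-,-)_{IX}$.
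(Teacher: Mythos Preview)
Your proof is correct and follows the same overall strategy as the paper: decompose $H_n(I_{f_\bullet}X)$ into the three pieces $im\, u_n$, $ker\, r_n$, $im\, s_nj_n$ using Lemmas \ref{intspacepair} and \ref{untwisted}, show the Gram matrix is block anti-diagonal on $im\, u_n \oplus ker\, r_n$ and orthogonal to $im\, s_nj_n$, then identify the pairing on $im\, s_nj_n$ with the Novikov pairing. There are two minor tactical differences worth noting. First, where the paper computes the characteristic polynomial $q(t)p(t)q(-t)$ of the block matrix to conclude that the outer blocks contribute zero signature, you instead observe that global nondegeneracy forces the $im\, u_n \times ker\, r_n$ pairing to be perfect, so that summand is hyperbolic --- this is arguably cleaner and more conceptual. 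Second, for the isometry $s_nj_n: (im\, j_n, (-,-)_j) \to (im\, s_nj_n, (-,-)_{IX})$, the paper chooses $\gamma$ with $h_n\gamma = s_nj_n\beta$ and then uses symmetry of $(-,-)_j$ to swap arguments and recognize $j_n\gamma = v_nh_n\gamma = j_n\beta$; you instead compare your $\delta'$ directly to $\epsilon$ and kill the discrepancy $\iota_n\omega$ via $j_n\iota_n = 0$ and $D_Lj_n = j^nD_L'$. Both routes are valid and roughly the same length.
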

\begin{proof}
Remember that throughout this proof we are \textit{assuming} $(-,-)_{IX}$ is symmetric.
We will frequently use Lemmas \hyp{intspacepair} and \hyp{untwisted} in this proof.
Combined they give us the decompositions:
\begin{align*}
&H_n(I_{f_\bullet}X) = im\, u_n \oplus im\, s_nj_n \oplus ker\, r_n\\
&im\, h_n = im\, u_n \oplus im\, s_nj_n\\
&im\, s_n = im\, s_nj_n \oplus ker\, r_n.
\end{align*}
Let's observe how these components pair.
By Lemma \hyp{intspacepair}, the above decompositions, and symmetry of the intersection space pairing, we know that under the intersection space pairing:
\begin{align*}
&im\, u_n \perp (im\, u_n \oplus im\, s_nj_n)\\
&im\, s_nj_n \perp (im\, u_n \oplus ker\, r_n)\\
&ker\, r_n \perp (im\, s_nj_n \oplus ker\, r_n).  
\end{align*}
Therefore, in a basis that respects the direct sum decomposition $im\, u_n \oplus im\, s_nj_n \oplus ker\, r_n$, the pairing $(-,-)_{IX}$ is represented by a symmetric block matrix of the form:
$$M = \left(\begin{array}{ccc}
0 & 0 & A\\
0 & Y & 0\\
A^T & 0 & 0
\end{array}\right),$$
where $Y$ is the symmetric matrix associated to the restricted pairing:
\begin{align*}
(-,-)_{IX} : im\, s_n j_n \times im\, s_nj_n \rightarrow \Q.
\end{align*}  
Let $p(t)$ and $q(t)$ be the respective characteristic polynomials for $Y$ and $A$.
Linear algebra shows that the characteristic polynomial of the block matrix $M$ is the product:
\begin{align*}
q(t)p(t)q(-t).
\end{align*}
Therefore the signature of $M$ (the number of positive eigenvalues minus the number of negative eigenvalues) is equal to the signature of $Y$, i.e.\ the signature of the restricted pairing:
\begin{align*}
im\, s_nj_n \times im\, s_nj_n \rightarrow \Q.
\end{align*} 

It remains to prove that the signature of this restricted pairing is the same as the signature of $(-,-)_j$; this is established if we can prove that the following diagram commutes:
$$\begin{tikzcd}
im\, s_nj_n \times im\, s_nj_n \ar[r] & k\\
im\, j_n \times im\, j_n \ar[u, "s_n \times s_n", "\simeq" {anchor=south, rotate=270, inner sep=1mm}] \ar[r] & k. \ar[equal, u]
\end{tikzcd}$$

Let $j_n\alpha$ and $j_n\beta$ in $im\, j_n$ be given.
Since $im\, s_nj_n \subset im\, h_n$ by untwistedness of the family of sections, there exists $\gamma \in H_n(X-T^\circ)$ such that $h_n\gamma = s_nj_n\beta$.
By Lemma \hyp{intspacepair} we have:
\begin{align*}
(s_nj_n\alpha, s_nj_n\beta)_{IX} = (s_nj_n\alpha, h_n\gamma)_{IX} =  (j_n\alpha, \gamma)_\partial = (j_n\gamma, \alpha)_{\partial},  
\end{align*}
where symmetry of the $j$-pairing was used in the last step.
Since $j_n = v_nh_n$ this further becomes:
\begin{align*}
(s_nj_n\alpha, s_nj_n\beta)_{IX} = (v_nh_n\gamma, \alpha)_{\partial} = (v_ns_nj_n\beta, \alpha) = (j_n\beta, \alpha)_\partial = (j_n\alpha,j_n\beta)_{j}.
\end{align*}
This proves that the diagram in question commutes, and we are finished.
\end{proof}

Next we verify that the pairing $(-,-)_{IX}$ induced by an untwisted family is indeed symmetric.
If $V$ and $W$ are subspace of $H_n(I_{f_\bullet}X)$, we say that the pairing $(-,-)_{IX}$ is \bi{symmetric on $(V,W)$} if and only if for all $\alpha \in V$ and $\beta \in W$ we have:
\begin{align*}
(\alpha,\beta)_{IX} = (\beta, \alpha)_{IX}.
\end{align*}

\begin{prop}
Suppose $n$ is even and $(r^\bullet, s_\bullet)$ is an untwisted family of sections.
Then $(-,-)_{IX}$ is symmetric on $H_n(I_{f_\bullet}X)$.
\end{prop}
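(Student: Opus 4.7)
The plan is to use the refined decomposition
$$H_n(I_{f_\bullet}X) = \im u_n \oplus \im s_n j_n \oplus \ker r_n$$
provided by Lemma \hyp{intspacepair} and Lemma \hyp{untwisted}, and to verify symmetry of $(-,-)_{IX}$ on each of the nine ordered pairs of summands. Four of these reduce to immediate vanishing in both orderings: the self-pairs $(\im u_n, \im u_n)$ and $(\ker r_n, \ker r_n)$, and the cross-pairs $(\im u_n, \im s_n j_n) \leftrightarrow (\im s_n j_n, \im u_n)$ and $(\im s_n j_n, \ker r_n) \leftrightarrow (\ker r_n, \im s_n j_n)$. One direction for each is immediate from Lemma \hyp{intspacepair}(ii) or from untwistedness condition (ii); the opposite direction is handled by rewriting $u_n \ell_n a = h_n \iota_n a$ via the relation $h_\bullet \iota_\bullet = u_\bullet \ell_\bullet$ collected in Subsection \hyp{DenAssGlobal} and exploiting the exactness $j_n \iota_n = 0$ together with $D_L j_\bullet = j^{2n-\bullet} D_L'$.

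The cross-pair $(\im u_n, \ker r_n) \leftrightarrow (\ker r_n, \im u_n)$ does not vanish but is verified directly. Writing $\alpha = u_n \ell_n a$ (using that $\ell_n$ is surjective) and $\beta = s_n \gamma$ with $\gamma = v_n \beta \in \im v_n$ (using $\ker r_n \subset \im s_n$ from untwistedness), the first formula of Lemma \hyp{intspacepair} gives $(u_n \ell_n a, \beta)_{IX} = (-1)^n (\gamma, \iota_n a)_L = (\gamma, \iota_n a)_L$, since $n$ is even. For the reverse ordering, substituting $u_n \ell_n a = h_n \iota_n a$ and applying the second formula of Lemma \hyp{intspacepair} with $\delta = \iota_n a$ produces $(s_n \gamma, h_n \iota_n a)_{IX} = (\gamma, \iota_n a)_L$, so the two sides match.

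The main obstacle is the self-pair $(\im s_n j_n, \im s_n j_n)$, which is the place where the Novikov symmetry enters decisively. Given $\alpha = s_n j_n a$ and $\beta = s_n j_n b$, untwistedness condition (i) supplies $c \in H_n(X - T^\circ; k)$ with $h_n c = s_n j_n b$, so Lemma \hyp{intspacepair} yields $(s_n j_n a, s_n j_n b)_{IX} = (j_n a, c)_L$. Applying $v_n$ to $h_n c = s_n j_n b$ and using $v_n h_n = j_n$ together with $v_n s_n = \mathrm{id}$ on $\im v_n$ shows $j_n c = j_n b$. The symmetry $(j_n \alpha, \beta)_L = (j_n \beta, \alpha)_L$ of the Lefschetz/Novikov pairing recalled just before Theorem \hyp{sign} then gives $(j_n a, c)_L = (j_n c, a)_L = (j_n b, a)_L$; the symmetric computation with $a$ and $b$ swapped gives $(s_n j_n b, s_n j_n a)_{IX} = (j_n a, b)_L$, and a final application of Novikov symmetry equates the two. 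The sign $(-1)^{n \cdot n}$ arising when converting $D_L'$ into $D_L$ equals $1$ precisely because $n$ is even. Assembling the nine cases completes the proof of symmetry of $(-,-)_{IX}$ on all of $H_n(I_{f_\bullet}X)$.
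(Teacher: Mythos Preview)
Your proof is correct and follows essentially the same decomposition-by-cases approach as the paper. Your choice to write $u_n\ell_n a = h_n\iota_n a$ directly (rather than picking an arbitrary $\gamma$ with $h_n\gamma = u_n\ell_n a$ as the paper does) slightly streamlines the $(\im u_n,\ker r_n)$ case, and you explicitly record the $(\ker r_n,\ker r_n)$ vanishing that the paper leaves implicit; one small expositional wrinkle is that for the cross-pair $(\im s_n j_n,\ker r_n)$ both orderings are in fact immediate (one from Lemma~\ref{intspacepair}, the other from untwistedness~(ii)), so the rewriting trick is not needed there.
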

\begin{proof}

\textit{Symmetry on $(im\, u_n, im\, u_n)$}: By Lemmas \hyp{intspacepair} and \hyp{untwisted}, we know $(im\, u_n, im\, u_n)_{IX} = 0$.

\textit{Symmetry on $(im\, u_n, im\, s_nj_n)$}:
By the same Lemmas, we know $(im\, u_n, im\, s_nj_n)_{IX} = 0$.
We must then check $(im\, s_nj_n, im\, u_n)_{IX} = 0$.
Let $s_nj_n\alpha$ and $u_n\beta$ be given.
Since $im\, u_n \subset im\, h_n = ker\, v_n$, we can write $u_n\beta = h_n\gamma$ and we observe:
\begin{align*}
j_n\gamma = v_nh_n\gamma = v_nu_n\beta = 0.
\end{align*} 
Using Lemma \hyp{intspacepair} and symmetry of the $j$-pairing we compute:
\begin{align*}
(s_nj_n\alpha, u_n\beta)_{IX} = (s_nj_n\alpha, h_n\gamma)_{IX} = (j_n\alpha, \gamma)_{L} = (j_n\gamma, \alpha)_{L} = 0.
\end{align*}

\textit{Symmetry on $(im\, u_n, ker\, r_n)$.}
Let $u_n\ell_n\alpha \in im\, u_n\ell_n = im\, u_n$ and $s_n\beta \in ker\, r_n \subset im\, s_n$ be given.
Since $im\, u_n \subset im\, h_n$ by Lemma \hyp{untwisted}, there exists $\gamma$ such that $u_n\ell_n\alpha = h_n\gamma$.
By Lemma \hyp{intspacepair}:
\begin{align*}
(u_n\ell_n\alpha, s_n\beta)_{IX} = (v_ns_n\beta, \iota_n \alpha)_L = (\beta, \iota_n\alpha)_L
\end{align*}
and:
\begin{align*}
(s_n\beta, u_n\ell_n\alpha)_{IX} = (s_n\beta, h_n\gamma)_{IX} = (\beta, \gamma)_L
\end{align*}
Next note that $\beta \in im\, v_n$ implies $(\beta, -)_L \in im\, h^n$ vanishes on $ker\, h_n$.
So symmetry can be proven if $\iota_n \alpha - \gamma \in ker\, h_n$.
This follows from:
\begin{align*}
h_n\iota_n\alpha = u_n\ell_n\alpha = h_n\gamma. 
\end{align*}

\textit{Symmetry on $(im\, s_nj_n, im\, s_nj_n)$.} Let $s_nj_n\alpha, s_nj_n\beta \in im\, s_nj_n$.
The reasoning from the last part of the proof of Theorem \hyp{sign} did not rely on symmetry of $(-,-)_{IX}$ and shows:
\begin{align*}
(s_nj_n\alpha, s_nj_n\beta)_{IX} = (j_n\alpha, j_n\beta)_j 
\end{align*}   
which of course is symmetric. 

\textit{Symmetry on $(im\, s_nj_n, ker\, r_n)$.} By Lemma \hyp{intspacepair} we have the vanishing $(im\, s_nj_n, ker\, r_n)_{IX} = 0$.
By property (ii) of being untwisted, we have the reverse vanishing $(ker\, r_n, im\, s_nj_n)_{IX}$.
\end{proof}

\end{document}